\newtheorem{theorem}{Theorem}[section]
\newtheorem{proposition}{Proposition}[section]
\newtheorem{lemma}{Lemma}[section]
\newtheorem{corollary}{Corollary}[section]
\newtheorem{remark}{Remark}[section]
\numberwithin{equation}{section}
\title[Parabolic Cauchy problems]{Global stability result for 
parabolic Cauchy problems}
\thanks{MC is supported by the grant ANR-17-CE40-0029 of the French National 
Research Agency ANR (project MultiOnde). MY is partially supported by 
Grant-in-Aid for Scientific Research (S) 15H05740 of Japan Society for the 
Promotion of Science, and this article was 
prepared with the support of the "RUDN University Program 5-100".
}
\author[Mourad Choulli]{Mourad Choulli}
\address{Universit\'e de Lorraine, 34 cours L\'eopold, 54052 Nancy cedex, 
France}
\email{mourad.choulli@univ-lorraine.fr}
\author[Masahiro Yamamoto]{Masahiro Yamamoto}
\address{Department of Mathematical Sciences, The University of Tokyo 3-8-1, 
Komaba, Meguro, Tokyo 153, Japan,
\newline
Honorary Member of Academy of Romanian Scientists,  
Splaiul Independentei Street, no 54,
050094 Bucharest Romania,
\newline
Peoples' Friendship University of Russia (RUDN University),
6 Miklukho-Maklaya St, Moscow, 117198, Russian Federation
}
\email{myama@ms.u-tokyo.ac.jp}
\begin{document}

\begin{abstract}
Uniqueness of parabolic Cauchy problems is nowadays a classical problem and
since Hadamard \cite{Ha} these kind of problems are known to be ill-posed and 
even severely ill-posed. Until now there are only few partial results 
concerning 
the quantification of the stability of parabolic Cauchy problems. 
We bring in the present work an answer to this issue for smooth solutions  
under the minimal condition that the domain is Lipschitz.
\end{abstract}

\subjclass[2010]{35R25, 35K99, 58J35}

\keywords{Parabolic Cauchy problems, logarithmic stability, Carleman inequality, Hardy inequality.}

\maketitle

\tableofcontents

\section{Introduction}\label{section1}

Throughout this article 
$\Omega$ is a bounded domain of $\mathbb{R}^n$ with Lipschitz boundary 
$\Gamma$. Consider the parabolic operator
\[
L=\mbox{div}(A\nabla \, \cdot )-\partial _t.
\]
Here $A=(a^{ij})$ is a symmetric matrix  whose elements belong 
to $W^{1,\infty}(\Omega )$.  Furthermore assume that there exists a constant 
$0<\kappa \le 1$ so that 
\begin{equation}\label{0.1}
A(x)\xi \cdot \xi \geq \kappa |\xi |^2,\quad  x\in \Omega , \; \xi \in 
\mathbb{R}^n,
\end{equation}
and 
\begin{equation}\label{0.2}
\|a^{ij}\|_{W^{1,\infty}(\Omega )}\le \kappa^{-1},\quad 1\leq i,j\leq n.
\end{equation}

Let $t_0<t_1$ satisfy $t_1-t_0\le T_0$, for some given $T_0>0$ and 
let us set $Q=\Omega \times (t_0,t_1)$. 

Recall the notation
\[
H^{2,1}(Q)=L^2((t_0,t_1),H^2(\Omega ))\cap H^1((t_0,t_1),L^2(\Omega )).
\]

If $\Gamma_0$ is a nonempty open subset of $\Gamma$, then a classical result 
says that any $u\in H^{2,1}(Q)$ satisfying $Lu=0$ in $Q$ and $u=\nabla u=0$ on 
$\Gamma_0\times (t_0,t_1)$ must be identically equal to zero in $Q$ (see e.g., 
\cite{Chou} and the references therein). This result is known as the 
uniqueness of the Cauchy problem for the equation $Lu=0$. Quantifying this 
uniqueness result consists in controlling a norm of a solution of $Lu=0$ 
by a suitable function of the norm of 
$(u,\nabla u)_{|\Gamma_0\times (t_0,t_1)}$ in some  space.

In the rest of this paper, $0<\alpha <1$ is fixed and, for the sake of 
simplicity, we use the notations
\begin{align*}
&\mathscr{X}(Q)=C^{1+\alpha ,(1+\alpha )/2}(\overline{Q})\cap H^1((t_0,t_1),
H^2(\Omega )),
\\
&\mathscr{Y}(Q)=\{u\in \mathscr{X}(Q);\; \partial_tu\in \mathscr{X}(Q)\},
\\
&\mathscr{Z}(Q)=\mathscr{Y}(Q)\cap H^3((t_0,t_1),H^2(\Omega )).
\end{align*}
We endow $\mathscr{X}(Q)$, $\mathscr{Y}(Q)$ and $\mathscr{Z}(Q)$  with their 
natural norms
\begin{align*}
&\|u\|_{\mathscr{X}(Q)}=\|u\|_{C^{1+\alpha ,(1+\alpha )/2}(\overline{Q})}
+ \|u\|_{H^1((t_0,t_1),H^2(\Omega ))}\\
&\|u\|_{\mathscr{Y}(Q)}=\|u\|_{\mathscr{X}(Q)}+\|\partial_tu\|_{\mathscr{X}
(Q)}. \\
& \|u\|_{\mathscr{Z}(Q)}=\|u\|_{\mathscr{Y}(Q)}+\|u\|_{H^3((t_0,t_1),
H^2(\Omega ))}
\end{align*}

Let $\varrho^\ast =e^{-e^{e}}$. Set then, for $\mu >0$ and $\varrho_0\le \varrho^\ast$,
\[
\Psi_{\varrho_0,\mu}(\varrho )=
\left\{
\begin{array}{ll}
0 &\mbox{if}\; \varrho =0,
\\
(\ln \ln  \ln  |\ln \varrho |)^{-\mu} \quad &\mbox{if}\; 0<\varrho\le 
\varrho_0 ,
\\
\varrho &\mbox{if}\; \varrho \ge \varrho_0.
\end{array}
\right.
\]

We are mainly concerned in the present work with the stability issue for the 
Cauchy problem associated to the parabolic operator $L$. Precisely, we are 
going to prove the following result.

\begin{theorem}\label{theorem1}
We assume that $Lu = 0$ in $Q$.
Let $\Gamma_0$ be a nonempty open subset of $\Gamma$ and $s\in (0,1/2)$. 
Then there exist two constants $C>0$ and $0<\varrho_0\le \varrho^\ast$, 
depending on $\Omega$, $\kappa$, $T_0$, $\alpha$, $s$ and $\Gamma_0$, 
so that, for any $0\ne u\in\mathscr{Z}(Q)$ satisfying $Lu=0$ in $Q$, we have
\[
C\|u\|_{L^2((t_0,t_1),H^1(\Omega ))}\le \|u\|_{\mathscr{Z}(Q)}
\Psi_{\varrho_0,\mu} \left(\frac{\mathcal{C}(u,\Gamma _0)}{\|u\|
_{\mathscr{Z}(Q)}}\right),
\]
with $\mu =\min\{\alpha,s\}/4$ and
\[
\mathcal{C}(u,\Gamma _0)=\|u\|_{H^3((t_0,t_1),L^2(\Gamma _0))}+\|\nabla u\|_{H^2((t_0,t_1),L^2(\Gamma _0))}.
\]
\end{theorem}

We note that Theorem 1.1 asserts a global estimate in the whole domain $Q$ 
where the equation $Lu=0$ holds. 

It is straightforward to check that $\mathcal{C}(u,\Gamma _0)$ in the 
preceding theorem can be substituted by
\[
\mathcal{C}(u,\Gamma _0)=\|u\|_{H^3((t_0,t_1),L^2(\Gamma _0))\cap 
H^2((t_0,t_1),H^1(\Gamma _0))}+\|\partial_{\mathbf{n}} u\|
_{H^2((t_0,t_1),L^2(\Gamma _0))}.
\]
Here $\mathbf{n}$ is the unit exterior normal field to $\Gamma$ and 
$\partial_{\mathbf{n}} u=\nabla u\cdot \mathbf{n}$.

It has been already well-known that one can prove estimates of
H\"older type in proper subdomains of $\Omega \times 
(t_0,t_1)$.  As for references, see \cite{Ya} for example and
in particular, see \cite[Theorem 5.1, page 24]{Ya} 
where a H\"older stability estimate by Cauchy data on $\Gamma_0
\times (t_0,t_1)$ is proved in a domain 
$\Omega' \times (\tilde{t_0}, \tilde{t_1})$, where
$\Gamma_0 \subset \partial\Omega$ is an arbitrarily given subboundary, 
$\tilde{t_0}$ and $\tilde{t_1}$ are arbitrarily given 
such that $t_0 < \tilde{t_0} < \tilde{t_1} < t_1$ and
$\Omega'$ is an arbitrary subdomain such that 
$\Omega' \Subset \Omega$.
Moreover it is known that when one wants to 
estimate $u$ up to boundary points, the stability 
rate becomes worse such as logarithmic, but to the best knowledge of the 
authors, the global estimate in the whole
domain $\Omega \times (t_0,t_1)$ has been not known by data on 
an arbitrary small subboundary $\Gamma_0$, and our theorem is the first
result.

Recently, for $L=\Delta -\partial _t$,
Bourgeois \cite[Main theorem, page 2]{Bo} proved a result 
similar to the one in Theorem \ref{theorem1}, which is of a single logarithmic 
type, but for such better rate than ours, he assumes a special geometrical 
configuration for $\Gamma_0$.  More precisely, he considers 
the case where $\Omega =D\setminus O$, $D$ and $O$ are two domains 
of class $C^2$, $O\Subset D$, and $\Gamma _0$ is either $\partial D$ or 
$\partial O$, and in particular, $\Gamma_0$ cannot be taken arbitrarily.
His result is based on a Carleman estimate with a weight 
function built from the distance to the boundary of the space variable,
which is different from our Carleman estimate.

Unlike \cite{Bo}, we discuss in the present work the Cauchy problem in all of 
its generality, that is without any restriction on the part of the boundary 
where the Cauchy data are given.

We also mention that Vessella \cite{Ve} established a local H\"older stability estimate, for a parabolic Cauchy problem, up to $t=t_1$ but far from $t=t_0$. Although the tools used in \cite{Ve} are similar to ours his approach is completely different from the one we use in the present work. So it is not clear whether it is possible combine our results with those in \cite{Ve} to improve our results.

We observe that Theorem \ref{theorem1} remains valid if $L$ is substituted by 
$L$ plus an operator of first order in space variable whose coefficients are 
bounded.
We limited ourselves to the case 
$Lu=0$, but one can consider the case of 
non-zero right-hand side of $Lu$ by adding to 
$\mathcal{C}(u,\Gamma _0)$ the norm of $Lu$ in a suitable space. We believe that this extension can be done without major difficulties.

The existing stability inequalities for elliptic Cauchy problems are optimal for $H^2$-solutions and $C^{1,1}$-domains \cite{Bo} and $C^{1,1}$-solutions and Lipschitz domains \cite{Ch}, \cite[Appendix A]{BC} or \cite{Ch1}. A non optimal result was recently proved by the first author \cite{Choulli} for $H^2$ solutions and Lipschitz domains.

The proof of the main result is inspired by that used in the elliptic case by 
the first author in \cite{Ch}. Note however that there is a great difference between 
the elliptic case and the parabolic case. 

In the elliptic case the main tool is a three-ball inequality with arbitrary 
radius. While in the parabolic case the method is based on a three-cylinder 
inequality (see Theorem \ref{theorem2.1}) in which the radius depends on the 
distance to the boundary of the time variable. Roughly speaking, the radius 
becomes smaller and smaller as the time variable approaches the boundary. 

Let us explain briefly how we obtain the global stability in the elliptic case. It consists in three steps. The first step consists in a continuation argument from an interior subdomain to the boundary which gives a stability inequality of logarithmic type. In the second step we continue solutions from an interior subdomain to another interior subdomain. While in the third step we continue the Cauchy data to an interior subdomain. These last two steps lead to H\"older stability inequalities. We follow the same scheme to establish the global stability inequality for parabolic equations.  The fact that we use in the first two steps a non optimal three-cylinder inequality yields a logarithm in the final inequality. We use in the third step directly the Carleman inequality but here again the absence of data at the boundary of the time interval in the continuation argument produces a logarithm in the final inequality. The first step which is already of logarithmic type for the elliptic case becomes of double logarithmic type in the parabolic case. The final global inequality is therefore of log-log-log-log type. In addition to that there is an additional difficulty due to the fact that the data at $t=t_0$ and $t=t_1$ are wanting. Precisely we obtain in each of the three steps described above a global estimate in $(t_0+\epsilon ,t_1-\epsilon)$, $\epsilon$ arbitrarily small. We then recover the missing data in $(t_0,t_0+\epsilon )$ and $(t_1+\epsilon ,t_1$ with the help of Hardy's inequality for vector-valued functions.

Even if the stability inequality we obtain seems to be very weak, the three-cylinder inequality appears to be the right tool for continuing a 
solution of a parabolic equation. For this reason, we are not convinced that 
Theorem \ref{theorem1} can be improved, in general, by using a global method. 

However we do not pretend that our stability inequality is the best possible. It is even an open problem to know whether the best possible stability inequality is of single logarithm type.

Although we used classical tools to establish our main result, 
there are no publications on the global stability in the sense of 
Theorem 1.1, and our proof is entirely self-contained. This is our  
contribution to the stability issue for parabolic Cauchy problems.

The most part in our analysis is  build on a  Carleman inequality 
(Theorem \ref{theorem1.1} below). We observe that Carleman inequalities are 
very useful in control theory and for establishing the unique 
continuation property for elliptic and parabolic partial differential 
equations. There is wide literature on this subject. We just quote here the following 
few references \cite{BY, Bo2, FLZ, Ch0,FG,FI,Hu,Is,LLe,LL,SS}.

The rest of this article is organized as follows. Section \ref{section2} is 
devoted to a three-cylinder interpolation inequality with respect to 
the $L_t^2(H_x^1)$-norm. This inequality will be very useful for continuing 
the data on an interior subdomain to the lateral boundary data, and to 
continue the data from one subdomain to another subdomain. 
This is what we show in Section \ref{section3} and, as byproduct, we prove a 
stability estimate corresponding to the unique continuation from an interior 
data. The proof of Theorem \ref{theorem1} is completed in Section 
\ref{section4} by beforehand establishing a result that quantifies 
the stability from the Cauchy data to an interior subdomain.

\section{Three-cylinder interpolation inequality}\label{section2}

We prove in this section 
\begin{theorem}\label{theorem2.1}
 There exist  $C>0$ and $0<\vartheta <1$, only depending on $\kappa$, $\Omega$ and $T_0$,  so that, for any  $0<\epsilon <(t_1-t_0)/2$, $u\in H^1((t_0,t_1),H^2(\Omega ))$ satisfying $Lu=0$ in $Q$,  $y\in \Omega $ and $0<r<r_y(\epsilon )=\min \left(\mbox{dist}(y,\Gamma )/3,\sqrt{\epsilon}\right)$, we have
\begin{align}
&r^3\|u\|_{L^2((t_0+\epsilon ,t_1-\epsilon ),H^1(B(y,2r)))} \label{2.1}
\\
&\hskip 2cm \le C \|u\|_{L^2((t_0,t_1),H^1(B(y,r)))}^\vartheta \|u\|_{L^2((t_0,t_1),H^1(B(y,3r)))}^{1-\vartheta}. \nonumber
\end{align}
\end{theorem}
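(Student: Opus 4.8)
The plan is to deduce the three-cylinder inequality from the Carleman estimate of Theorem \ref{theorem1.1} by the classical cut-off-and-optimize scheme, the one genuinely parabolic ingredient being the way the temporal margin $\epsilon$ is coupled to the radius $r$ through the condition $r<\sqrt{\epsilon}$. First I would reduce to the unit scale. Since $3r<\mbox{dist}(y,\Gamma)$ we have $B(y,3r)\Subset\Omega$, so everything takes place strictly inside $\Omega$ and no lateral boundary will enter. Introduce the parabolic rescaling $v(x,t)=u(y+rx,\,t_c+r^2t)$, where $t_c$ is the midpoint of $(t_0,t_1)$; then $v$ solves $\mbox{div}(\widetilde{A}\nabla v)-\partial_t v=0$ with $\widetilde{A}(x)=A(y+rx)$ still satisfying \eqref{0.1} and \eqref{0.2} (the Lipschitz seminorm only improves, since $r\le1$). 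In these variables the balls $B(y,r),B(y,2r),B(y,3r)$ become $B(0,1),B(0,2),B(0,3)$, the time interval becomes an interval of half-length $(t_1-t_0)/(2r^2)$, and the margin $\epsilon$ becomes $\epsilon/r^2\ge1$ precisely because $r<\sqrt{\epsilon}$. Undoing the scaling at the end reinserts the powers of $r$, and a careful bookkeeping of these (together with the powers of the Carleman parameter and the degeneracy of the weight over a long rescaled time interval) is what produces the factor $r^3$ on the left-hand side.

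Next I would localize. Write $B(0,2)=B(0,1)\cup\{1\le|x|\le2\}$; the contribution of $B(0,1)$ is already controlled, since $\|v\|_{H^1(B(0,1))}=\|v\|_{H^1(B(0,1))}^{\vartheta}\|v\|_{H^1(B(0,1))}^{1-\vartheta}\le\|v\|_{H^1(B(0,1))}^{\vartheta}\|v\|_{H^1(B(0,3))}^{1-\vartheta}$, so it suffices to treat the annulus $\{1\le|x|\le2\}$. Choose a cut-off $\chi(x,t)=\chi_1(x)\chi_2(t)$ with $\chi_1=1$ on a neighborhood of $\{1\le|x|\le2\}$ and supported in $\{1/2<|x|<5/2\}$, and $\chi_2=1$ on the margined time interval and supported in the whole one. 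Since $Lv=0$, one computes $L(\chi v)=\widetilde{A}\nabla\chi\cdot\nabla v+\mbox{div}(\widetilde{A}\,v\nabla\chi)-v\,\partial_t\chi$, a first-order expression supported in three ``source'' regions: an inner spatial shell, an outer spatial shell, and the two temporal margins. The inner shell feeds $\|v\|_{H^1(B(0,1))}$, the outer shell feeds $\|v\|_{H^1(B(0,3))}$, and the temporal margins must be absorbed by the weight.

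Then I apply the Carleman inequality of Theorem \ref{theorem1.1} to $w=\chi v$, which is compactly supported in the cylinder. The weight is radial and strictly decreasing in $|x|$ and degenerate near the temporal ends, so that on the target annulus it is bounded below by $e^{2\tau\Phi_2}$, on the inner shell it is bounded above by $e^{2\tau\Phi_{\mathrm{in}}}$ with $\Phi_{\mathrm{in}}>\Phi_2$, and on both the outer shell and the temporal margins it is bounded above by $e^{2\tau\Phi_{\mathrm{out}}}$ with $\Phi_{\mathrm{out}}<\Phi_2$. The crucial point is the last one: it is exactly the margin $\epsilon/r^2\ge1$ that places the temporal commutator $v\,\partial_t\chi_2$ in a region where the weight is exponentially smaller than on the target, so that this genuinely parabolic term is dominated by the outer-shell term. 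Dividing the resulting inequality by $e^{2\tau\Phi_2}$ yields a bound of the shape
\[
\tau^{3}\,\|v\|_{H^1(\{1\le|x|\le2\})}^2\le C\Big(e^{2\tau\delta_1}\,\|v\|_{H^1(B(0,1))}^2+e^{-2\tau\delta_2}\,\|v\|_{H^1(B(0,3))}^2\Big),\qquad \tau\ge\tau_0,
\]
with $\delta_1,\delta_2>0$ depending only on the weight.

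Finally I optimize in $\tau$. If $\|v\|_{H^1(B(0,3))}/\|v\|_{H^1(B(0,1))}$ is large enough, the choice $\tau^\ast=\frac{1}{\delta_1+\delta_2}\ln\big(\|v\|_{H^1(B(0,3))}/\|v\|_{H^1(B(0,1))}\big)\ge\tau_0$ balances the two terms and gives the Hölder inequality with exponent $\vartheta=\delta_2/(\delta_1+\delta_2)\in(0,1)$; if it is small, then $\|v\|_{H^1(B(0,1))}$ and $\|v\|_{H^1(B(0,3))}$ are comparable up to the fixed factor $e^{2\tau_0(\delta_1+\delta_2)}$ and the inequality holds trivially at $\tau=\tau_0$. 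Combining with the trivial bound on $B(0,1)$ and undoing the rescaling finishes the proof. I expect the main obstacle to be the weight geometry in the third step: choosing a single weight that is simultaneously monotone in $|x|$ (for the spatial three-ball balance) and sufficiently degenerate near the temporal ends to swallow the temporal commutator under the sole assumption $r<\sqrt{\epsilon}$, together with the careful tracking, through the rescaling and the optimal choice of $\tau$, of the exact power $r^3$.
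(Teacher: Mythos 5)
Your overall Carleman scheme (rescale to unit spatial scale, cut off, compare the weight on three regions, optimize in $\tau$) is the same as the paper's, and that part is sound. The genuine gap is in how you handle the time variable, which is precisely the parabolic difficulty this theorem is about. You rescale once around the midpoint $t_c$ and work on a single cylinder whose rescaled time-length is $(t_1-t_0)/r^2$, hoping that a temporal cutoff $\chi_2$ produces a commutator $v\,\partial_t\chi_2$ living where the weight is ``exponentially smaller than on the target.'' Two things break here. First, the Carleman constants $\lambda_0,\tau_0$ in Theorem \ref{theorem1.1} depend on the length of the time interval, so applying it on an interval of length $\sim r^{-2}$ gives constants that are not uniform as $r\to 0$. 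Second, with a weight of the form $g(t)=1/((t-a)(b-t))$ on an interval of length $L\sim r^{-2}$, the values of $\varphi$ at the centre ($g\sim r^4$) and at distance $\epsilon/r^2$ from the ends ($g\sim r^2$) both tend to $0$, so the gap $\varphi_{\mathrm{margin}}-\varphi_{\mathrm{target}}$ is only $O(r^2)$ and the exponential separation you need forces $\tau\gtrsim r^{-2}$; you have not shown this balance closes, nor how it yields exactly $r^3$. You correctly identify this as ``the main obstacle,'' but the proposal does not overcome it.

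The paper's resolution is structurally different and you should note it: the rescaling is done around each time $s\in[t_0+\epsilon,t_1-\epsilon]$ separately, mapping $B(y,3r)\times(s-r^2,s+r^2)$ to the \emph{fixed} cylinder $B(0,3)\times(-1,1)$ (the hypothesis $r<\sqrt{\epsilon}$ is exactly what guarantees $(s-r^2,s+r^2)\subset(t_0,t_1)$). On this unit cylinder the cutoff is purely spatial, the weight $g(t)=1/(1-t^2)$ degenerates at $t=\pm1$ so no temporal commutator ever appears, and the Carleman constants are uniform. This gives a one-slab estimate with a single factor $r$ on the left. The interval $(t_0+\epsilon,t_1-\epsilon)$ is then covered by $q\lesssim r^{-2}$ such slabs of length $2\tau r^2$, and summing over them multiplies the right-hand side by $q$; the factor $r^3$ is exactly $r\cdot r^{2}$ coming from one power of $r$ per slab times the $O(r^{-2})$ count of slabs. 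Without this slicing step (or a worked-out substitute for it), your argument does not produce the stated inequality on the whole interval $(t_0+\epsilon,t_1-\epsilon)$, nor the power $r^3$.
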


The proof of Theorem \ref{theorem2.1} is based on a Carleman inequality for a family of parabolic operators. To this end, let $\mathcal{Z}$ be an arbitrary set and consider the family of operators 
\[
L_z=\mbox{div}(A_z\nabla \, \cdot\, )-\partial _t,\quad z\in \mathcal{Z},
\]
where, for each $z\in \mathcal{Z}$,  $A_z=(a_z^{ij})$ is a symmetric matrix with  $W^{1,\infty}(\Omega )$ entries and there exists $0<\kappa \le 1$  so that 
\begin{equation}\label{1.1}
A_z(x)\xi \cdot \xi \geq \kappa |\xi |^2,\quad  x\in \Omega , \; \xi \in \mathbb{R}^n\; \mbox{and}\; z\in \mathcal{Z},
\end{equation}
and 
\begin{equation}\label{1.2}
\|a_z^{ij}\|_{W^{1,\infty}(\Omega )}\le \kappa^{-1},\quad 1\leq i,j\leq n,\; z\in \mathcal{Z}.
\end{equation}

Pick $\psi \in C^2(\overline{\Omega})$ without critical points in $\overline{\Omega}$ and set $\Sigma =\Gamma \times (t_0,t_1)$. Let 
\[
g(t)=\frac{1}{(t-t_0)(t_1-t)}
\] 
and 
\begin{align*}
&\varphi (x,t) =g(t)\left( e^{4\lambda \|\psi\|_\infty}-e^{\lambda (2 \|\psi\|_\infty + \psi (x))}\right),
\\
& \chi (x,t)=g(t)e^{\lambda (2 \|\psi\|_\infty + \psi (x))}.
\end{align*}

\begin{theorem}\label{theorem1.1} (Carleman inequality)
There exist three positive constants $C$, $\lambda _0$ and $\tau _0$,
only depending only on  $\psi$, $\Omega$, $\kappa$ and $T_0$, so that
\begin{align}
&C\int_Q \left(\lambda ^4\tau ^3\chi ^3u^2+\lambda ^2\tau \chi |\nabla u|^2 \right)e^{-2\tau \varphi} dxdt \label{1.3}
\\
&\hskip 1cm \le \int_Q (L_zu)^2e^{-2\tau \varphi}dxdt \nonumber
\\
&\hskip 2cm+\int_\Sigma \left( \lambda^3\tau ^3\chi ^3u^2+\lambda \tau \chi |\nabla u|^2+(\lambda \tau \chi )^{-1}(\partial _tu)^2)\right)e^{-2\tau \varphi} d\sigma dt, \nonumber
\end{align}
for all $u\in H^1((t_0,t_1),H^2(\Omega ))$, $z\in \mathcal{Z}$, $\lambda \geq \lambda _0$ and $\tau \geq \tau _0$.\nonumber
\end{theorem}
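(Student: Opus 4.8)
The plan is to establish \eqref{1.3} by the classical conjugation-and-splitting method for Carleman inequalities, adapted to the parabolic setting with the Fursikov--Imanuvilov type weight $\varphi$. The decisive feature of this weight is that $g(t)\to+\infty$ as $t\to t_0^+$ and as $t\to t_1^-$, so that $e^{-2\tau \varphi}$, together with its derivatives, decays to zero at the two time endpoints; this is precisely what will allow us to avoid any initial or final data and to retain only the lateral boundary $\Sigma$ in the right-hand side.

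First I would set $w=e^{-\tau \varphi}u$ and compute the conjugated operator $Pw=e^{-\tau \varphi}L_z(e^{\tau \varphi}w)$. Using the identities $\nabla \varphi=-\lambda \chi \nabla \psi$ and the corresponding expression for $\partial _t\varphi$, one writes $Pw=P_+w+P_-w$, where the self-adjoint part (in $L^2(Q)$) is taken to be $P_+w=\mbox{div}(A_z\nabla w)+\tau ^2(A_z\nabla \varphi \cdot \nabla \varphi )w-\tau (\partial _t\varphi )w$ and the skew-adjoint part is $P_-w=-\partial _tw+2\tau A_z\nabla \varphi \cdot \nabla w+\tau\, \mbox{div}(A_z\nabla \varphi )\,w$, the last zeroth-order coefficient being fixed exactly so that $P_-$ is skew-adjoint. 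Expanding $\int_Q(Pw)^2=\int_Q(P_+w)^2+\int_Q(P_-w)^2+2\int_QP_+w\,P_-w$, I discard the two nonnegative squares and keep only the cross term.

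The heart of the argument is the cross term $2\int_QP_+w\,P_-w$. Integrating by parts in both $x$ and $t$ turns it into an interior integral of the commutator $[P_+,P_-]$ applied to $w$, plus boundary contributions. The commutator is a second-order operator whose leading coefficients, once we use that $\psi$ has no critical point in $\overline{\Omega}$ so that $|\nabla \psi |$ is bounded below, produce the positive quadratic form controlling $\int_Q(\lambda ^4\tau ^3\chi ^3w^2+\lambda ^2\tau \chi |\nabla w|^2)$. The dominant $\lambda ^4\tau ^3\chi ^3$ contribution comes from the interaction of the term $\tau ^2A_z\nabla \varphi \cdot \nabla \varphi \sim \lambda ^2\tau ^2\chi ^2$ in $P_+$ with the first-order term of $P_-$. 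The parabolic-specific subtlety, which I expect to be the main obstacle, is to check that the time term $\tau \partial _t\varphi$, which scales like $g'(t)\sim g^2$ near the endpoints, is dominated by $\chi ^3\sim g^3$; this is what forces $\tau _0$ to be large. I would first fix $\lambda \geq \lambda _0$ large enough to absorb the lower-order terms of the commutator into the leading ones, and then fix $\tau \geq \tau _0$. Since only the uniform bounds \eqref{1.1}--\eqref{1.2} are used, all thresholds and the constant $C$ depend solely on $\psi$, $\Omega$, $\kappa$ and $T_0$, uniformly in $z\in \mathcal{Z}$.

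It remains to treat the boundary terms and to revert to $u$. The integration by parts in $t$ yields terms evaluated at $t=t_0$ and $t=t_1$; these vanish because $\varphi \to +\infty$ there forces $w$, $\nabla w$ and $\partial _tw$ to zero with enough decay to kill the polynomial factors in $g$, using $u\in H^1((t_0,t_1),H^2(\Omega ))$. The integration by parts in $x$ produces integrals over $\Sigma$ carrying the weights $\lambda ^3\tau ^3\chi ^3w^2$, $\lambda \tau \chi |\nabla w|^2$ and $(\lambda \tau \chi )^{-1}(\partial _tw)^2$; substituting $w=e^{-\tau \varphi}u$ and bounding $|\nabla w|^2\le Ce^{-2\tau \varphi}(|\nabla u|^2+\tau ^2\lambda ^2\chi ^2u^2)$, together with the analogous bound for $\partial _tw$, reproduces exactly the three boundary terms in the right-hand side of \eqref{1.3}, the extra $u^2$-contributions being absorbed into the interior $\lambda ^4\tau ^3\chi ^3$ term once $\tau ,\lambda$ are large. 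The same substitution converts the interior quadratic form into $\int_Q(\lambda ^4\tau ^3\chi ^3u^2+\lambda ^2\tau \chi |\nabla u|^2)e^{-2\tau \varphi}$, which completes the proof.
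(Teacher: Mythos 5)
Your proposal is correct and follows essentially the same route as the paper: conjugate by $e^{\tau\varphi}$, split the conjugated operator into two parts (the paper's $P_1$, $P_2$ coincide with your $P_+$, $P_-$ up to shifting a zeroth-order piece into a separately estimated residual $cw$), bound $\|Pw\|^2$ from below by the cross term, integrate by parts to exhibit the positive quadratic forms in $w$ and $\nabla w$ using that $\psi$ has no critical points, kill the $t=t_0,t_1$ contributions via the blow-up of $g$, collect the lateral boundary terms, and revert to $u$. The points you flag as delicate ($|\partial_t\varphi|\lesssim\chi^2$, $|\partial_t^2\varphi|\lesssim\chi^3$, uniformity in $z$) are exactly the ones the paper checks.
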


\begin{proof}
Since the dependance of the constants will be uniform with respect to $z\in \mathcal{Z}$, we drop for simplicity the subscript $z$ in $L_z$ and its coefficients. On the other hand, as $C^\infty(\overline{Q})$ is dense in $H^1((t_0,t_1),H^2(\Omega ))$, it is enough to prove \eqref{1.3} when $u\in C^\infty(\overline{Q})$.

Let $\Phi =e^{\tau \varphi}$, $u\in C^\infty(\overline{Q})$  and set $w=\Phi^{-1}u$ that we extend by continuity at $t=0$ and $t=T$ by setting $w(\cdot ,0)=w(\cdot ,T)=0$. Then straightforward computations give
\[
Pw=[\Phi ^{-1}L \Phi ]w=P_1w+P_2w+cw,
\]
where
\begin{align*}
&P_1w=aw+\mbox{div}\, ( A\nabla w) -\tau \partial _t \varphi w,
\\
&P_2w= B\cdot \nabla w+bw -\partial _tw,
\end{align*}
with
\begin{align*}
&a=a(x,t,\lambda ,\tau )= \lambda ^2\tau ^2  \chi^2|\nabla \psi |_A^2,
\\
&B=B(x,t,\lambda ,\tau )=-2\lambda \tau  \chi A\nabla \psi ,
\\
&b=b(x,t,\lambda ,\tau )=-2\lambda ^2\tau \chi |\nabla \psi |_A^2,
\\
&c=c(x,t,\lambda ,\tau )=-\lambda \tau \chi\mbox{div}\, ( A\nabla \psi  )+\lambda ^2\tau  \chi |\nabla \psi |_A^2 .
\end{align*}
Here
\[
|\nabla \psi |_A=\sqrt{A\nabla \psi \cdot \nabla \psi}=\left|A^{1/2}\nabla u\right|.
\]
We obtain by  making integrations by parts
\begin{align}
\int_Q aw(B\cdot \nabla w)dxdt & =\frac{1}{2}\int_Q a(B\cdot \nabla w^2)dxdt \label{1.4}
\\
&=-\frac{1}{2}\int_Q \mbox{div}(aB) w^2dxdt+\frac{1}{2}\int_\Sigma a(B\cdot \nu) w^2d\sigma dt\nonumber
\end{align}
and
\begin{align}
&\int_Q \mbox{div}\, ( A\nabla w)B\cdot \nabla wdxdt \label{1.5}
\\
&\hskip 1cm =-\int_Q A\nabla w\cdot \nabla (B\cdot \nabla w)dxdt+\int_\Sigma (B\cdot \nabla w)(A\nabla w\cdot\nu )d\sigma dt\nonumber
\\
&\hskip 1 cm=-\int_Q (B')^t\nabla w\cdot A\nabla wdxdt\nonumber
\\
&\hskip 3cm -\int_Q\nabla ^2wB\cdot A\nabla wdxdt+\int_\Sigma (B\cdot \nabla w)( A\nabla w\cdot\nu ) d\sigma dt. \nonumber
\end{align}
Here $B'=(\partial _jB_i)$ is the Jacobian matrix of $B$ and $\nabla ^2w=(\partial ^2_{ij}w)$ is the Hessian matrix of $w$.

But
\begin{align*}
\int_Q B_j\partial ^2_{ij}w a^{ik}\partial _kwdxdt&=-\int_Q B_j a^{ik}\partial ^2_{jk}w \partial _i wdxdt
\\ 
&\quad -\int_Q\partial _j\left[B_ja^{ik}\right]\partial _kw\partial _i wdxdt+\int_\Sigma B_j\nu _j a^{ik}\partial _kw\partial _iwd\sigma dt.
\end{align*}
Therefore
\begin{align}
\int_Q\nabla ^2wB\cdot A\nabla wdxdt=-\frac{1}{2}\int_Q &\left(\left[\mbox{div}(B)A+\tilde{A}\right]\nabla w\right)\cdot \nabla wdxdt \label{1.6}
\\
&+\frac{1}{2}\int_\Sigma |\nabla w|_A^2(B\cdot \nu )  d\sigma dt ,\nonumber
\end{align}
where $\tilde{A}=(\tilde{a}^{ij})$ with $\tilde{a}^{ij}=B\cdot \nabla a^{ij}.$

It follows from \eqref{1.5} and \eqref{1.6} 
\begin{align}
\int_Q \mbox{div}\, ( A\nabla w)B&\cdot \nabla wdxdt=\frac{1}{2}\int_Q  \left(-2A(B')^t+\mbox{div}(B)A+\tilde{A} \right)\nabla w\cdot\nabla wdxdt \label{1.7}
\\
&+\int_\Sigma \left(B\cdot \nabla w\right) \left(A\nabla w\cdot\nu\right) d\sigma dt- \frac{1}{2}\int_\Sigma |\nabla w|_A^2(B\cdot \nu ) d\sigma dt. \nonumber
\end{align}

A new integration by parts yields
\begin{align*}
\int_Q \mbox{div}\, ( A\nabla w) bwdxdt=-\int_Q b|\nabla w|_A^2dxdt&-\int_Q w\nabla b\cdot A\nabla wdxdt
\\
&\qquad +\int_\Sigma bwA\nabla w\cdot \nu d\sigma dt.
\end{align*}
This and 
\[
-\int_Q w\nabla b\cdot A\nabla wdxdt\geq -\int_Q (\lambda ^2\chi )^{-1}|\nabla b|_A^2w^2dxdt-\int_Q \lambda ^2\chi |\nabla w|_A^2dx dt
\]
(where we used the inequality $|AX\cdot Y| \le |X|_A^2+|Y|_A^2$, for all $X,Y\in \mathbb{R}^n$) imply
\begin{align}
\int_Q \mbox{div}\, ( A\nabla w) bwdxdt &\ge -\int_Q (b+\lambda ^2\chi )|\nabla w|_A^2dxdt \label{1.8}
\\
&-\int_Q (\lambda ^2\chi )^{-1}|\nabla b|_A^2w^2dxdt+\int_\Gamma bw(A\nabla w\cdot \nu) d\sigma dt.\nonumber
\end{align}

One more time, integrations by parts entail
\begin{equation}\label{1.8.1}
\int_Qa w\partial _twdxdt=\frac{1}{2}\int_Qa \partial _tw^2dxdt=-\frac{1}{2}\int_Q\partial _t a w^2dxdt,
\end{equation}
\begin{equation}\label{1.8.2}
\int_Q\partial _t \varphi w\partial _twdxdt=\frac{1}{2}\int_Q\partial _t \varphi \partial _tw^2dxdt=-\frac{1}{2}\int_Q\partial _t ^2\varphi w^2dxdt,
\end{equation}
\begin{align}
\int_Q \partial_t \varphi wB\cdot \nabla wdxdt &= \frac{1}{2}\int_Q\partial_t \varphi B\cdot \nabla w^2dxdt  \label{1.8.3}
\\
&=-\frac{1}{2}\int_Q\mbox{div}(\partial_t \varphi B) w^2dxdt +\frac{1}{2} \int_\Sigma \partial _t\varphi (B\cdot \nu )w^2d\sigma dt. \nonumber
\end{align}
Also,
\begin{equation}\label{1.8.4}
\int_Q\mbox{div}(A\nabla w)\partial _twdxdt=-\int_Q A\nabla w\cdot \nabla \partial_twdxdt+\int_\Sigma (A\nabla w\cdot \nu) \partial _tw d\sigma dt.
\end{equation}
But an integration by parts with respect to $t$ gives
\[
\int_Q A\nabla w\cdot \nabla \partial_twdxdt=-\int_Q A\nabla \partial _tw\cdot \nabla wdxdt= -\int_Q \nabla \partial _tw\cdot A\nabla wdxdt,
\]
where we used $w(\cdot ,0)=w(\cdot ,T)=0$.

Whence
\[
\int_Q A\nabla w\cdot \nabla \partial_twdxdt=0.
\]
This identity in \eqref{1.8.4} entails
\begin{equation}\label{1.8.5}
\int_Q\mbox{div}(A\nabla w)\partial _tw=\int_\Sigma (A\nabla w\cdot \nu) \partial _tw d\sigma dt.
\end{equation}

Now a combination of \eqref{1.4}, \eqref{1.7} to \eqref{1.8.3} and  \eqref{1.8.5} gives
\begin{align}
\int_Q P_1wP_2wdxdt &-\int_Q c^2w^2dxdt \label{1.9}
\\
&\ge \int_Q fw^2dxdt+\int_Q F\nabla w\cdot \nabla w dxdt+\int_\Sigma g(w)d\sigma dt,\nonumber
\end{align}
where
\begin{align*}
&f=-\frac{1}{2}\mbox{div}(aB)+ab-(\lambda ^2\chi )^{-1}|\nabla b|_A^2-c^2 +\frac{1}{2}\partial _ta -\frac{\tau}{2}\partial_t^2\varphi +\frac{\tau}{2}\mbox{div}(\partial _t\varphi B)-\tau b\partial_t\varphi ,
\\
&F=-A(B')^t+\frac{1}{2}\Big(\mbox{div}(B)A +\tilde{A}\Big) -(b+\lambda ^2\chi )A,
\\
&g(w)=\frac{1}{2}aw^2(B\cdot \nu)-\frac{1}{2}|\nabla w|_A^2(B\cdot \nu)+(B\cdot \nabla w)(A\nabla w \cdot \nu) 
\\
&\hskip 4cm +bw(A\nabla w \cdot \nu) -\frac{\tau}{2}\partial _t\varphi (B\cdot \nu) w^2 -(A\nabla w\cdot \nu )\partial _tw.
\end{align*}

We obtain, by using the elementary  inequality $(\alpha -\beta )^2\geq \alpha^2/2-\beta^2$,  
\begin{align*}
\|Pw\|_2^2 &\geq (\|P_1w+P_2w\|_2-\|cw\|_2)^2\\ &\geq \frac{1}{2}\|P_1w+P_2w\|_2^2-\|cw\|_2^2\\ &\geq \int_\Omega P_1wP_2w dx-\int_\Omega c^2w^2dx.
\end{align*}
Whence \eqref{1.9} implies
\begin{equation}\label{1.10}
\|Pw\|_2^2\geq \int_Q fw^2dxdt+\int_Q F\nabla w\cdot \nabla w dxdt+\int_\Sigma g(w)d\sigma dt.
\end{equation}

In light of the following inequalities, where $C$ is a constant depending only on $T_0$ and $\psi$,
\begin{align*}
&|\partial _t  \varphi |\le C\chi ^2,
\\
& |\partial _t^2\varphi |,\; |\nabla \partial _t\varphi |\le C\chi ^3
\\
&|(A\nabla w\cdot \nu) \partial _tw|\le \lambda \tau \chi |A\nabla w\cdot \nu|^2+(\lambda \tau \chi )^{-1}(\partial _tw)^2,
\end{align*}
straightforward computations show that there exist four positive constants $C_0$, $C_1$, $\lambda _0$ and $\tau_0$, only depending only on $\psi$, $\Omega$, $T_0$ and $\kappa$, such that, for all $\lambda \geq \lambda _0$ and $\tau \geq \tau_0$, so that
\begin{align*}
&f\geq C_0 \lambda ^4\tau ^3\chi ^3,
\\
&F\xi \cdot \xi \geq C_0\lambda ^2\tau \chi |\xi |^2,\;\; \textrm{for any}\; \xi \in \mathbb{R}^n,
\\
&|g(w)|\leq C_1\left( \lambda ^3\tau ^3\chi ^3w^2+\lambda \tau \chi |\nabla w|^2 +(\lambda \tau \chi )^{-1}(\partial _tw)^2\right).
\end{align*}
Hence
\begin{align}
C\int_Q  (\lambda ^4\tau ^3\chi ^3w^2&+\lambda ^2\tau \chi |\nabla w|^2 ) dxdt \leq \int_Q (Pw)^2dxdt \label{1.11}
\\ 
&+\int_\Sigma ( \lambda^3\tau ^3\chi ^3w^2+\lambda \tau \chi |\nabla w|^2+(\lambda \tau \chi )^{-1}(\partial _tw)^2) d\sigma dt.\nonumber
\end{align}

As $\nabla w=\Phi^{-1}\left( \nabla u+\lambda \tau \chi u\nabla \psi \right)$, we obtain
\[
|\nabla w|^2= \Phi ^{-2}\left( |\nabla u|^2+\lambda ^2\tau ^2\chi ^2|\nabla \psi |^2u^2+2\lambda \tau \chi u \nabla u\cdot \nabla \psi \right).
\]
Therefore we find, by using an elementary inequality,
\[
|\nabla w|^2\ge \Phi ^{-2}\left( |\nabla u|^2+\lambda ^2\tau ^2|\nabla \psi |^2u^2-4\lambda ^2 \tau^2  u^2 | \nabla \psi |^2-\frac{1}{2}|\nabla u|^2\right)
\]
and then

\[
|\nabla w|^2\ge \Phi ^{-2}\left(\frac{1}{2} |\nabla u|^2-3\lambda ^2 \tau^2\chi ^2  u^2 \| \nabla \psi \|_\infty^2\right).
\]

Consequently, modifying $\lambda_0$ if necessary, we get
\begin{equation}\label{1.13}
\lambda ^2\tau \chi |\nabla w|^2 +\lambda ^4 \tau^3\chi ^3  w^2\ge C \Phi ^{-2}\left( \lambda ^2\tau \chi|\nabla u|^2+\lambda ^4 \tau^3\chi ^3 \tau^2  u^2\right).
\end{equation}
On the other hand, it is not hard to establish the inequality
\begin{equation}\label{1.14}
(\partial _tw)^2\le \Phi ^{-2}\left((\partial_tu)^2+C\tau ^2\chi ^2u^2\right).
\end{equation}
The expected inequality follows then by combining \eqref{1.11}, \eqref{1.13} and \eqref{1.14}.
\end{proof}

From the preceding proof it is obvious that  Theorem \ref{theorem1.1} holds whenever $L_z$ is replaced by $L$. That is we have
\begin{theorem}\label{theorem1.1+} (Carleman inequality)
There exist three positive constants $C$, $\lambda _0$ and $\tau _0$,
only depending only on  $\psi$, $\Omega$, $\kappa$ and $T_0$, so that
\begin{align}
&C\int_Q \left(\lambda ^4\tau ^3\chi ^3u^2+\lambda ^2\tau \chi |\nabla u|^2 \right)e^{-2\tau \varphi} dxdt \label{1.3+}
\\
&\hskip 1cm \le \int_Q (Lu)^2e^{-2\tau \varphi}dxdt \nonumber
\\
&\hskip 2cm+\int_\Sigma \left( \lambda^3\tau ^3\chi ^3u^2+\lambda \tau \chi |\nabla u|^2+(\lambda \tau \chi )^{-1}(\partial _tu)^2)\right)e^{-2\tau \varphi} d\sigma dt, \nonumber
\end{align}
for all $u\in H^1((t_0,t_1),H^2(\Omega ))$, $\lambda \geq \lambda _0$ and $\tau \geq \tau _0$.\nonumber
\end{theorem}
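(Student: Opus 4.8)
The plan is to deduce \eqref{1.3+} directly from the Carleman inequality \eqref{1.3} of Theorem \ref{theorem1.1} by viewing $L$ as a single member of the family $\{L_z\}_{z\in\mathcal{Z}}$. Concretely, I would take the index set to be a singleton, say $\mathcal{Z}=\{\ast\}$, and set $A_\ast=A$, so that $L_\ast=L$.

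The only thing to verify is that this choice satisfies the standing hypotheses of Theorem \ref{theorem1.1}. But $A$ is symmetric with entries in $W^{1,\infty}(\Omega)$, and by \eqref{0.1} and \eqref{0.2} it obeys the ellipticity and norm bounds with the constant $\kappa$; these are precisely conditions \eqref{1.1} and \eqref{1.2} read over the one-point index set $\{\ast\}$. Hence Theorem \ref{theorem1.1} applies to the family $\{L_\ast\}$, and its conclusion \eqref{1.3} is exactly \eqref{1.3+}, with the three constants $C,\lambda_0,\tau_0$ depending only on $\psi,\Omega,\kappa,T_0$ — the dependence claimed in Theorem \ref{theorem1.1+}.

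Equivalently, and perhaps more transparently, one may simply reread the proof of Theorem \ref{theorem1.1} with $A$ in place of $A_z$ throughout. Every object appearing there — the operators $P_1w$, $P_2w$, the coefficients $a,B,b,c$, the zeroth- and first-order terms $f$, $F$ and the boundary integrand $g(w)$ — is built from $A$ and the fixed weight $\psi$ alone, and each estimate invoked (the pointwise lower bounds $f\ge C_0\lambda^4\tau^3\chi^3$ and $F\xi\cdot\xi\ge C_0\lambda^2\tau\chi|\xi|^2$, the upper bound for $|g(w)|$, and the auxiliary bounds $|\partial_t\varphi|\le C\chi^2$, $|\partial_t^2\varphi|,|\nabla\partial_t\varphi|\le C\chi^3$) uses nothing about the coefficient matrix beyond the uniform bounds \eqref{1.1}–\eqref{1.2}. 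Since those bounds hold for $A$, the whole chain of identities \eqref{1.4}–\eqref{1.10} and inequalities \eqref{1.11}–\eqref{1.14} carries over verbatim and yields \eqref{1.3+}.

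There is essentially no obstacle here: the statement is a formal specialization of Theorem \ref{theorem1.1}, whose proof was deliberately organized with constants uniform in $z\in\mathcal{Z}$ precisely so that this reduction is immediate. The only point deserving a word is that the constants keep the asserted dependence, which is clear since restricting to a singleton family can only weaken the uniformity requirement, leaving $C,\lambda_0,\tau_0$ a function of $\psi,\Omega,\kappa,T_0$ alone.
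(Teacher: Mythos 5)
Your argument is correct and coincides with the paper's own: the authors likewise obtain Theorem \ref{theorem1.1+} by observing that the proof of Theorem \ref{theorem1.1} applies verbatim with $L_z$ replaced by $L$, since \eqref{0.1}--\eqref{0.2} are exactly \eqref{1.1}--\eqref{1.2} for the one-element family. Nothing further is needed.
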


\begin{proof}[Proof of Theorem \ref{theorem2.1}]
Let $u\in H^1((t_0,t_1),H^2(\Omega ))$ satisfying $Lu=0$ and set 
\[
Q(\mu )=B(0,\mu )\times (-1,1),\quad \mu >0.
\]
Fix $(y,s)\in \Omega \times (t_0,t_1)$ and  
\[
0<r< r_{(y,s)}=\min \left(\mbox{dist}(y,\Gamma )/3, \sqrt{s-t_0},\sqrt{t_1-s}\right)\le r_0=r_0(\mbox{diam}(\Omega ),T_0 ).
\] 

Let
\[
w(x,t)=u(rx+y,r^2t +s),\quad (x,t)\in Q(3),
\]
Then 
\[
L_rw=\textrm{div}(A_r\nabla w )-\partial _tw=0\quad  \mbox{in}\; Q(3),
\]
where $A_r(x)=(a^{ij}(rx+y))$.
 
Clearly, the family $(A_r)$  satisfies \eqref{1.1} and \eqref{1.2} uniformly with respect to $r\in (0, r_{(y,s)})$.

Let $\phi \in C_0^\infty (U)$ satisfying $0\leq \phi \leq 1$ and  $\phi =1$ in $\mathcal{K}$, with
\[
U=\left\{x\in \mathbb{R}^n;\; 1/2<|x|<3\right\}\quad \mbox{and}\quad \mathcal{K}=\left\{x\in \mathbb{R}^n;\; 1\leq |x|\leq 5/2\right\}.
\]
Theorem \ref{theorem1.1} applied to $\phi w$ when $\Omega$ is substituted by $U$ and $g(t)=1/(1-t^2)$ gives, for $\lambda \geq \lambda _0$ and $\tau \geq \tau _0$,
\begin{align}
&C\int_{Q(2)\setminus Q(1)} \left (\lambda ^4\tau ^3\varphi ^3w^2+\lambda ^2\tau \varphi |\nabla w|^2 \right)e^{-2\tau \varphi} dxdt \label{1.17}
\\
&\hskip 6cm \leq \int_{Q(3)} (L_r(\phi w))^2e^{-2\tau \varphi}dxdt, \nonumber
\end{align}
 the constant $C$ only depends on  $\kappa$.
 
But
\[
\mbox{supp}(L_r(\phi w))\subset \left[\left\{1/2 \leq |x|\leq 1\right\}\cup \left\{5/2\leq |x|\leq 3\right\}\right]\times (-1,1)
\]
and
\[
(L_r(\phi w))^2 \leq \Lambda (w^2+|\nabla w|^2 ),
\]
where $\Lambda =\Lambda (r_0)$ is independent on $r$. Therefore, fixing $\lambda$ and changing $\tau _0$ if necessary, \eqref{1.17} implies, for $\tau \geq \tau _0$,
\begin{align}
C\int_{Q(2)} &\left (w^2+|\nabla w|^2 \right)e^{-2\tau \varphi} dxdt\leq \int_{Q(1)} \left (w^2+|\nabla w|^2 \right)e^{-2\tau \varphi} dxdt\label{1.18}
\\
&\hskip 2cm +\int_{Q(3)\setminus Q(5/2)} \left (w^2+|\nabla w|^2 \right)e^{-2\tau \varphi} dxdt.\nonumber
\end{align}

Let  $0<\rho <1$ to be specified later and choose $\psi (x)=-|x|^2$ in \eqref{1.18} (which is without critical points in $U$). In that case
\[
\varphi (x,t) =g(t)\left( e^{36\lambda }-e^{\lambda (18 - |x|^2)}\right).
\]
We have
\begin{align*}
&\varphi (x,t)\le g(-1+\rho )\left( e^{36\lambda }-e^{14\lambda}\right)\le \frac{1}{\rho }\left( e^{36\lambda }-e^{14\lambda}\right)=\frac{\alpha}{\rho } ,
\\
&\hskip 6cm  (x,t)\in B(2)\times (-1+\rho, 1-\rho ),
\\
&\varphi (x,t)\ge g(0)\left(e^{36\lambda}-e^{18\lambda}\right)=\left(e^{36\lambda}-e^{18\lambda}\right)=\beta,\quad (x,t)\in Q(1),
\\
&\varphi (x,t)\ge g(0)\left(e^{36\lambda}-e^{\frac{47}{4}\lambda}\right)=\left(e^{36\lambda}-e^{\frac{47}{4}\lambda}\right)=\gamma ,\quad (x,t)\in Q(3)\setminus Q\left(5/2\right).
\end{align*}

As $\frac{\beta}{\alpha}<1<\frac{\gamma}{\alpha}$, we can fix $\theta \in (0,1)$ so that 
\[
\frac{1}{\rho}:=\theta \frac{\beta}{\alpha} +(1-\theta )\frac{\gamma}{\alpha} >1 .
\]
Set $a=2(1-\theta )(\gamma -\beta )$ and $b=2\theta (\gamma -\beta )$ and $\tilde{Q}(2)=B(0,2)\times (-1+\rho ,1-\rho  )$. Then \eqref{1.18} yields
\begin{align*}
C\int_{\tilde{Q}(2)} &\left (w^2+|\nabla w|^2 \right) dxdt
\\
&\le e^{a\tau } \int_{Q(1)} \left (w^2+|\nabla w|^2 \right) dxdt +e^{-b\tau }\int_{Q(3)} \left (w^2+|\nabla w|^2 \right) dxdt.
\end{align*}
Similarly to the elliptic case \cite[Theorem 2.17 and its proof, pages 19 to 21]{Ch} (see also the proof of Proposition \ref{propositionS4}), we obtain from this inequality the following one
\[
C\|w\|_{L^2((-1+\rho ,1-\rho ),H^1(B(2)))}\le \|w\|_{L^2((-1,1 ),H^1(B(1)))}^\vartheta \|w\|_{L^2((-1,1 ),H^1(B(3)))}^{1-\vartheta},
\]
with $\vartheta=\frac{a}{a+b}$.

We get by making a change of variable, where $\tau =1-\rho$,
\begin{align}
&r\|u\|_{L^2((s-\tau r^2 ,s+\tau r^2 ),H^1(B(y,2r)))} \label{3s}
\\
&\qquad \le C \|u\|_{L^2((s-r^2,s+r^2 ),H^1(B(y,r)))}^\vartheta \|u\|_{L^2((s-r^2,s+r^2),H^1(B(y,3r)))}^{1-\vartheta}.\nonumber
\end{align}
Here and until the end of this proof, the generic constant $C$ only depends on $\Omega$, $\kappa$ and $T_0$.

Fix $0<\epsilon < (t_1-t_0)/2$. Let $s_0=t_0+\epsilon$ and $s_k=s_{k-1}+2\tau r^2$, $k\ge 1$, in such a way that
\[
(s_{k-1},s_k)=((s_{k-1}+\tau r^2)-\tau r^2, (s_{k-1}+\tau r^2)+\tau r^2).
\]

We consider $q$ the smallest integer so that $(t_1-\epsilon )-s_{q-1}\le 2\tau r^2$ or equivalently $(t_1-\epsilon ) - s_{q-2}>2\tau r^2$. Whence
\begin{equation}\label{4s}
q<\frac{t_1-t_0-2\epsilon}{2\tau r^2}+3<\frac{\delta}{2\tau r^2}+\frac{3\mbox{diam}(\Omega )^2}{r^2}= \left(\frac{T_0}{2\tau }+3\mbox{diam}(\Omega )^2\right)\frac{1}{r^2}.
\end{equation}

Let $r<r_y(\epsilon )=\min \left(\mbox{dist}(y,\Gamma )/3,\sqrt{\epsilon}\right)$. It follows from \eqref{3s} that
\begin{align*}
&r\|u\|_{L^2((s_{k-1} ,s_k),H^1(B(y,2r)))} 
\\
&\qquad \le C \|u\|_{L^2((t_0,t_1 ),H^1(B(y,r)))}^\vartheta \|u\|_{L^2((t_0,t_1),H^1(B(y,3r)))}^{1-\vartheta},
\end{align*}
with $s_q=t_1-\epsilon$.

Thus
\begin{align*}
&r\sum_{k=1}^{q}\|u\|_{L^2((s_{k-1} ,s_k),H^1(B(y,2r)))} 
\\
&\qquad \le Cq \|u\|_{L^2((t_0,t_1 ),H^1(B(y,r)))}^\vartheta \|u\|_{L^2((t_0,t_1),H^1(B(y,3r)))}^{1-\vartheta}.
\end{align*}

In consequence  
\begin{align}
&r\|u\|_{L^2((t_0+\epsilon ,t_1-\epsilon ),H^1(B(y,2r)))}\label{5s}
\\
&\hskip 1cm \le Cq \|u\|_{L^2((t_0,t_1),H^1(B(y,r)))}^\vartheta \|u\|_{L^2((t_0,t_1),H^1(B(y,3r)))}^{1-\vartheta}.\nonumber
\end{align}

Estimate \eqref{4s} in \eqref{5s} yields
\begin{align*}
&Cr^3\|u\|_{L^2((t_0+\epsilon ,t_1-\epsilon ),H^1(B(y,2r)))}
\\
&\hskip 1cm \le  \|u\|_{L^2((t_0,t_1),H^1(B(y,r)))}^\vartheta \|u\|_{L^2((t_0,t_1),H^1(B(y,3r)))}^{1-\vartheta},\quad r<r_y(\epsilon ). 
\end{align*}
The proof is then complete.
\end{proof}

\section{Quantifying the uniqueness of continuation from an interior data}\label{section3}

We start with a Hardy inequality for vector valued functions.

\begin{lemma}\label{lemmaH}
Let $X$ be a Banach space with norm  $\|\cdot \, \|$ and $s\in (0,1/2)$. There exists a constant $c>0$ so that, for any $u\in H^s((t_0,t_1),X)$, we have
\[
\left\| \frac{u}{\delta ^s}\right\|_{L^2((t_0,t_1),X)}\le c\|u\|_{H^s((t_0,t_1),X)}.
\]
Here $\delta =\delta (t)=\min\{ |t-t_0|,|t-t_1|\}$.
\end{lemma}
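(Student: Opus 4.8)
The plan is to reduce the vector-valued statement to the classical scalar fractional Hardy inequality, and then to prove the latter by a local averaging-and-absorption argument in which the hypothesis $s<1/2$ plays the decisive role. The reduction to scalars is essentially for free. Setting $v(t)=\|u(t)\|$, the reverse triangle inequality gives $|v(t)-v(\sigma)|\le\|u(t)-u(\sigma)\|$ for a.e. $t,\sigma$, so, using the Gagliardo--Slobodeckij characterization
\[
\|u\|_{H^s((t_0,t_1),X)}^2=\int_{t_0}^{t_1}\|u(t)\|^2\,dt+\int_{t_0}^{t_1}\!\!\int_{t_0}^{t_1}\frac{\|u(t)-u(\sigma)\|^2}{|t-\sigma|^{1+2s}}\,dt\,d\sigma,
\]
one obtains at once $\|v\|_{H^s(t_0,t_1)}\le\|u\|_{H^s((t_0,t_1),X)}$, while $\|u/\delta^s\|_{L^2((t_0,t_1),X)}=\|v/\delta^s\|_{L^2(t_0,t_1)}$ since $\|u(t)\|=v(t)$. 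Hence it suffices to prove the scalar inequality $\|v/\delta^s\|_{L^2(t_0,t_1)}\le c\,\|v\|_{H^s(t_0,t_1)}$ for nonnegative $v$; the vector-valued nature of $u$ plays no further role.

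Since $\delta(t)=t-t_0$ on the left half $(t_0,t_m)$ (with $t_m=(t_0+t_1)/2$) and $\delta(t)=t_1-t$ on the right half, and the two halves are exchanged by the reflection $t\mapsto t_0+t_1-t$ (which preserves the $H^s$ norm), it is enough to bound $I:=\int_{t_0}^{t_m}v(t)^2(t-t_0)^{-2s}\,dt$. After translating so that $t_0=0$, I would compare $v(t)$ with the dilated average $A(t)=\frac{1}{(K-1)t}\int_t^{Kt}v$, for a large constant $K$ to be fixed, via the splitting $v(t)^2\le 2|v(t)-A(t)|^2+2A(t)^2$. The difference term is controlled directly by the seminorm: by Cauchy--Schwarz and the bound $|t-\sigma|\le(K-1)t$ for $\sigma\in(t,Kt)$,
\[
\int_0^{t_1/K}\frac{|v(t)-A(t)|^2}{t^{2s}}\,dt\le (K-1)^{2s}\int_{0}^{t_1}\!\!\int_{0}^{t_1}\frac{|v(t)-v(\sigma)|^2}{|t-\sigma|^{1+2s}}\,dt\,d\sigma .
\]
The average term, again by Cauchy--Schwarz and Fubini, is bounded by $\theta_K\int_0^{t_1}v(\sigma)^2\sigma^{-2s}\,d\sigma$ with $\theta_K=\frac{K^{2s}-1}{2s(K-1)}$, the key point being that $2s<1$ forces $\theta_K\to 0$ as $K\to\infty$. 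One then splits $\int_0^{t_1}v^2\sigma^{-2s}=I+\int_{t_m}^{t_1}v^2\sigma^{-2s}$ and $I=\int_0^{t_1/K}+\int_{t_1/K}^{t_m}$, noting that on $(t_1/K,t_m)$ and on $(t_m,t_1)$ the weight is bounded, so those pieces are dominated by $\|v\|_{L^2}^2$. Choosing $K$ so large that $2\theta_K\le 1/2$, the term $2\theta_K I$ can be absorbed into the left-hand side, yielding $I\le C\|v\|_{H^s(t_0,t_1)}^2$ and closing the estimate.

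The genuinely delicate point is precisely this absorption. The naive average over $(t,2t)$ produces a constant $\theta$ lying in $(\ln 2,1)$, which is too large to survive the factor $2$ coming from the splitting of $v^2$; one must dilate the averaging window and use quantitatively that $\int_{\sigma/K}^{\sigma}t^{-1-2s}\,dt$ decays like $K^{2s-1}$, which is where $s<1/2$ is indispensable. This is exactly the obstruction that makes the statement fail for $s\ge 1/2$ (a logarithmic correction is needed at $s=1/2$, and a vanishing-trace condition at the endpoints for $s>1/2$), so the restriction $s\in(0,1/2)$ is not a technical artifact but the heart of the matter.
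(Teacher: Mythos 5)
Your proposal is correct. The first step---passing to $v(t)=\|u(t)\|$ via the reverse triangle inequality in the Gagliardo--Slobodeckij seminorm, so that $\|v\|_{H^s(t_0,t_1)}\le\|u\|_{H^s((t_0,t_1),X)}$ while the weighted $L^2$ norms coincide---is exactly the reduction the paper performs. Where you diverge is in what follows: the paper simply invokes the classical one-dimensional fractional Hardy inequality (citing Grisvard) for the scalar function $v$, whereas you prove that scalar inequality from scratch by comparing $v(t)$ with the dilated average $A(t)=\frac{1}{(K-1)t}\int_t^{Kt}v$, controlling the oscillation term by the seminorm with constant $(K-1)^{2s}$ and the average term by $\theta_K=\frac{K^{2s}-1}{2s(K-1)}$ times the weighted integral itself, then absorbing after choosing $K$ large (possible precisely because $2s<1$ forces $\theta_K\to 0$). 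I checked the two Cauchy--Schwarz/Fubini computations and the constants; they are right, and your observation that the naive window $(t,2t)$ gives $\theta\in(\ln 2,1)$, too large to survive the factor $2$ from splitting $v^2$, correctly identifies why the dilation parameter is needed and why the argument breaks at $s=1/2$. What your route buys is a self-contained proof in place of a citation, with the role of $s<1/2$ made explicit; what it costs is length, plus one point you should make explicit: the absorption of $2\theta_K I$ into the left-hand side presupposes $I<\infty$, so you should first run the estimate on the truncated quantity $I_\eta=\int_{t_0+\eta}^{t_m}v^2(t-t_0)^{-2s}\,dt$ (finite since $v\in L^2$ and the weight is bounded there), absorb, and let $\eta\to 0$ by monotone convergence. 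This is routine but should be said.
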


\begin{proof}
 Let $u\in H^s((t_0,t_1),X)$. From the usual Hardy's inequality in dimension one (see for instance \cite{Gr}) we have
\begin{equation}\label{H1}
\int_{t_0}^{t_1}\frac{\|u(t)\|^2}{\delta ^{2s}(t)}dt\le c\| \| u(\cdot )\| \|^2_{H^s((t_0,t_1))}.
\end{equation}
But
\begin{align*}
\| \| u(\cdot )\| \|^2&_{H^s((t_0,t_1))}=\|u\|^2_{L^2((t_0,t_1),X)}+\int_{t_0}^{t_1}\int_{t_0}^{t_1}\frac{|\|u(\tau)\| -\|u(t)\||^2}{|\tau -t|^{1+2s}}dtd\tau
\\
&\le \|u\|^2_{L^2((t_0,t_1),X)}+\int_{t_0}^{t_1}\int_{t_0}^{t_1}\frac{\|u(\tau)- u(t)\|^2}{|\tau -t|^{1+2s}}dtd\tau =\|u\|^2_{H^s((t_0,t_1),X)}.
\end{align*}
Whence the result follows.
\end{proof}

In the rest of this paper we shall often apply Hardy's inequality in Lemma \ref{lemmaH} to functions from $H^k((t_0,t_1),H)$, where $k\ge 1$ is an integer and $H$ is a Hilbert space. This is made possible by \cite[Remark 9.5, page 46]{LM} saying that $H^s((t_0,t_1),H)$, $0<s<1$, can be seen as an interpolated space between $L^2((t_0,t_1),H)$ and $H^1((t_0,t_1),H)$. Precisely, we have
\[
H^s((t_0,t_1),H)=[L^2((t_0,t_1),H),H^1((t_0,t_1),H)]_{1-s},\quad 0<s<1.
\]
We readily obtain from Lemma \ref{lemmaH} the following corollary.

\begin{corollary}\label{corollaryH}
Let $H$ be a Hilbert space and $s\in (0,1/2)$. There exists a constant $c>0$ so that, for any $u\in H^1((t_0,t_1),H)$, we have
\[
\left\| \frac{u}{\delta ^s}\right\|_{L^2((t_0,t_1),H)}\le c\|u\|_{H^1((t_0,t_1),H)},
\]
where $\delta$ is as in Lemma \ref{lemmaH}.
\end{corollary}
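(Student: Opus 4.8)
The plan is to deduce this directly from Lemma \ref{lemmaH} together with the interpolation identity quoted in the remark immediately preceding the corollary. The only content here is to replace the fractional norm $\|\cdot\|_{H^s}$ on the right-hand side of Lemma \ref{lemmaH} by the full norm $\|\cdot\|_{H^1}$, which is legitimate as soon as one has the continuous embedding $H^1((t_0,t_1),H)\hookrightarrow H^s((t_0,t_1),H)$ for $0<s<1$.

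First I would observe that a Hilbert space $H$ is in particular a Banach space, so Lemma \ref{lemmaH} applies verbatim with $X=H$: for every $u\in H^s((t_0,t_1),H)$ we have
\[
\left\|\frac{u}{\delta^s}\right\|_{L^2((t_0,t_1),H)}\le c\,\|u\|_{H^s((t_0,t_1),H)}.
\]
Next I would establish the embedding. By the identity $H^s((t_0,t_1),H)=[L^2((t_0,t_1),H),H^1((t_0,t_1),H)]_{1-s}$ recalled before the statement, the standard interpolation inequality yields a constant $C>0$, depending only on $s$, such that
\[
\|u\|_{H^s((t_0,t_1),H)}\le C\,\|u\|_{L^2((t_0,t_1),H)}^{\,s}\,\|u\|_{H^1((t_0,t_1),H)}^{\,1-s}\le C\,\|u\|_{H^1((t_0,t_1),H)}
\]
for every $u\in H^1((t_0,t_1),H)$, where the last bound uses $\|u\|_{L^2}\le\|u\|_{H^1}$. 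In particular every $u\in H^1((t_0,t_1),H)$ belongs to $H^s((t_0,t_1),H)$, so the previous display applies to it.

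Finally I would simply chain the two inequalities, absorbing $c$ and $C$ into a single constant, to obtain
\[
\left\|\frac{u}{\delta^s}\right\|_{L^2((t_0,t_1),H)}\le c\,\|u\|_{H^s((t_0,t_1),H)}\le cC\,\|u\|_{H^1((t_0,t_1),H)},
\]
which is the claimed estimate. There is no genuine obstacle here: the result is a routine corollary, and the only point requiring care is the justification of the embedding $H^1\hookrightarrow H^s$, which is handed to us by the interpolation characterization cited from \cite{LM}. The one thing I would be mindful of is that $s<1/2$ is needed only for Lemma \ref{lemmaH} itself (through the one-dimensional Hardy inequality), whereas the embedding step is valid for all $0<s<1$.
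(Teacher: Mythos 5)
Your argument is correct and is exactly what the paper intends: the corollary is obtained by applying Lemma \ref{lemmaH} with $X=H$ and using the interpolation identity $H^s((t_0,t_1),H)=[L^2((t_0,t_1),H),H^1((t_0,t_1),H)]_{1-s}$ quoted just before the statement to get the embedding $H^1\hookrightarrow H^s$. The paper gives no separate proof beyond this remark, so your write-up simply makes explicit the same routine chaining of the two estimates.
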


Next, we prove

\begin{proposition}\label{proposition3.1}
Let $s\in (0,1/2)$. There exist $\omega \Subset \Omega$, only depending on $\Omega$, and  three constants $c>0$, $C>0$ and $\sigma _0>0$, only depending on $\Omega$, $\kappa$, $T_0$, $s$ and $\alpha$, so that, for any $u\in \mathscr{X}(Q)$ satisfying $Lu=0$ in $Q$ and $0<\sigma <\sigma _0$, we have
\begin{align}
\|u\|_{L^2((t_0,t_1),L^2(\Gamma ))}&+\|\nabla u\|_{L^2((t_0,t_1),L^2(\Gamma ))} \label{3.1}
\\
&\le C\left(\sigma ^{\min\{\alpha ,s\}/2} \|u\|_{\mathscr{X}(Q)} +e^{e^{c/\sqrt{\sigma}}}\|u\|_{L^2((t_0,t_1),H^1(\omega ))}\right).\nonumber
\end{align}
\end{proposition}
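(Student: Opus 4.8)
\noindent\emph{Proof strategy.} The plan is to reduce the two boundary quantities in \eqref{3.1} to interior values on a shell lying at depth $\sim\sqrt{\sigma}$ inside $\Omega$, using the H\"older regularity built into $\mathscr{X}(Q)$, and then to bound those interior values by the interior data on $\omega$ through an iterated use of the three-cylinder inequality of Theorem \ref{theorem2.1}; the loss at the initial and final times that this iteration creates is absorbed by the Hardy inequality of Corollary \ref{corollaryH}. First I would carry out the H\"older reduction. Since $u\in\mathscr{X}(Q)\subset C^{1+\alpha,(1+\alpha)/2}(\overline{Q})$, for $\xi\in\Gamma$ and the interior point $x=\xi-\sqrt{\sigma}\,\mathbf{n}(\xi)$ one has $|u(\xi,t)-u(x,t)|\le\sqrt{\sigma}\,\|u\|_{\mathscr{X}(Q)}$ and, from the $C^\alpha$ modulus of $\nabla u$, $|\nabla u(\xi,t)-\nabla u(x,t)|\le C\sigma^{\alpha/2}\|u\|_{\mathscr{X}(Q)}$. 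Integrating over $\Gamma\times(t_0,t_1)$ and fattening in the normal direction, the left-hand side of \eqref{3.1} is controlled, up to an error $C\sigma^{\alpha/2}\|u\|_{\mathscr{X}(Q)}\le C\sigma^{\min(\alpha,s)/2}\|u\|_{\mathscr{X}(Q)}$, by an interior norm $\|u\|_{L^2((t_0,t_1),H^1(\mathcal{N}_\sigma))}$ on a thin interior neighbourhood $\mathcal{N}_\sigma$ of $\Gamma$ at depth $\sim\sqrt{\sigma}$.

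Next I would propagate smallness from $\omega$ up to $\mathcal{N}_\sigma$. Fixing $\epsilon=\sigma$, I cover a path in $\Omega$ joining $\omega$ to $\mathcal{N}_\sigma$ by a chain of balls $B(y_k,r)$, $0\le k\le N$, with $y_0\in\omega$, $B(y_N,r)\subset\mathcal{N}_\sigma$ and $B(y_{k+1},r)\subset B(y_k,2r)$, the common radius being $r\sim\sqrt{\sigma}$: its smallness is forced simultaneously by $r<\mathrm{dist}(y_N,\Gamma)/3\sim\sqrt{\sigma}$ near the boundary and by $r<\sqrt{\epsilon}=\sqrt{\sigma}$. Since the path has fixed length and the balls have radius $\sim\sqrt{\sigma}$, one needs $N\le C/\sqrt{\sigma}$ of them. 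Applying Theorem \ref{theorem2.1} to each ball gives, on the reduced interval $I_\epsilon=(t_0+\epsilon,t_1-\epsilon)$,
\[
r^3\|u\|_{L^2(I_\epsilon,H^1(B(y_k,2r)))}\le C\|u\|_{L^2((t_0,t_1),H^1(B(y_k,r)))}^{\vartheta}\,\|u\|_{L^2((t_0,t_1),H^1(B(y_k,3r)))}^{1-\vartheta},
\]
with $\vartheta\in(0,1)$ independent of $\sigma$. To feed the reduced-interval output of one ball into the full-interval input of the next, Corollary \ref{corollaryH} applied to $u$ and to $\nabla u$ gives $\|u\|_{L^2((t_0,t_1),H^1(B))}\le\|u\|_{L^2(I_\epsilon,H^1(B))}+C\epsilon^{s}\|u\|_{H^1((t_0,t_1),H^1(B))}$, the correction being $\le C\sigma^{s}\|u\|_{\mathscr{X}(Q)}\le C\sigma^{\min(\alpha,s)/2}\|u\|_{\mathscr{X}(Q)}$.

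Iterating along the chain, and bounding by the global quantity $M=\|u\|_{L^2((t_0,t_1),H^1(\Omega))}\le\|u\|_{\mathscr{X}(Q)}$ every full-interval factor other than the propagating one, I obtain an inequality of the shape
\[
\|u\|_{L^2(I_\epsilon,H^1(\mathcal{N}_\sigma))}\le C(\sigma)\,\|u\|_{L^2((t_0,t_1),H^1(\omega))}^{\theta}\,\|u\|_{\mathscr{X}(Q)}^{1-\theta}+C\sigma^{\min(\alpha,s)/2}\|u\|_{\mathscr{X}(Q)},
\]
where $\theta=\vartheta^{N}$ and $C(\sigma)$ collects the $r^{-3}\sim\sigma^{-3/2}$ prefactors over the $N\sim1/\sqrt{\sigma}$ steps. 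Because $N\le C/\sqrt{\sigma}$ one has $\theta=\vartheta^{N}\ge e^{-c/\sqrt{\sigma}}$, i.e. $1/\theta\le e^{c/\sqrt{\sigma}}$. Applying Young's inequality $x^{\theta}y^{1-\theta}\le\eta\,y+\eta^{-(1-\theta)/\theta}x$ with $x=\|u\|_{L^2((t_0,t_1),H^1(\omega))}$, $y=\|u\|_{\mathscr{X}(Q)}$ and the choice $\eta=\sigma^{\min(\alpha,s)/2}$, the term $\eta\,y$ merges with the previous errors, while the coefficient of the data term is $C(\sigma)\,\eta^{-1/\theta}\le e^{e^{c/\sqrt{\sigma}}}$ once $C(\sigma)$ and $\ln(1/\eta)$ are absorbed into the double exponential. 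Restoring the two discarded time-bands near $t_0$ and $t_1$ on the left (once more by Corollary \ref{corollaryH}) yields \eqref{3.1}.

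The step I expect to be the main obstacle is the bookkeeping of the iteration. Matching the reduced-interval output of each application of Theorem \ref{theorem2.1} with the full-interval input of the next forces a Hardy correction at each of the $N\sim1/\sqrt{\sigma}$ steps, and one must verify that these band corrections, together with the accumulated $r^{-3}$ prefactors, do not grow faster than $e^{e^{c/\sqrt{\sigma}}}$; this is precisely where the high regularity encoded in $\mathscr{X}(Q)$, ensuring that Corollary \ref{corollaryH} applies to both $u$ and $\nabla u$, is indispensable. Checking that the composite interpolation exponent is exactly $\theta=\vartheta^{N}$ with $N\asymp1/\sqrt{\sigma}$, and that the optimization in $\eta$ returns precisely the pair $\sigma^{\min(\alpha,s)/2}$ and $e^{e^{c/\sqrt{\sigma}}}$, is the technical heart of the argument.
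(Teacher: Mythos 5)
Your overall architecture coincides with the paper's: reduce the boundary norms to an interior shell at depth $\sim\sqrt{\sigma}$ via the H\"older regularity in $\mathscr{X}(Q)$, propagate smallness from a fixed interior set $\omega$ to that shell by a chain of $N\sim 1/\sqrt{\sigma}$ applications of Theorem \ref{theorem2.1} (the paper builds the chain along the axis of the interior cones furnished by the Lipschitz property, which is also how it gets an $\omega$ depending only on $\Omega$), recover the lost time bands with the Hardy inequality, and finish with Young's inequality, the double exponential coming from $1/\beta\le e^{c/\sqrt{\sigma}}$ with $\beta=\vartheta^{N+1}$. Up to these points your proposal is the paper's proof.

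However, the one place where you deviate is exactly the step you flag as the main obstacle, and your proposed bookkeeping there does not work. You keep the output interval fixed at $I_\epsilon=(t_0+\epsilon,t_1-\epsilon)$ and, at \emph{each} step, restore the full interval by writing
\[
\|u\|_{L^2((t_0,t_1),H^1(B))}\le \|u\|_{L^2(I_\epsilon,H^1(B))}+C\epsilon^{s}\|u\|_{H^1((t_0,t_1),H^1(B))}.
\]
This inserts an \emph{additive} error $C\epsilon^{s}M$ into the quantity $A_k$ that is then raised to the power $\vartheta$ at every subsequent step. Writing $B_k=A_k/M$, the recursion becomes $B_{k+1}\le Cr^{-3}B_k^{\vartheta}+C\epsilon^{s}$, and the error injected at step $k$ emerges at step $N$ as (up to prefactors) $\bigl(\epsilon^{s}\bigr)^{\vartheta^{N-k}}$. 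Since $N\sim 1/\sqrt{\epsilon}$ and $\vartheta^{N}\sim e^{-c/\sqrt{\epsilon}}$, for early $k$ one has
\[
\bigl(\epsilon^{s}\bigr)^{\vartheta^{N-k}}=\exp\!\bigl(s\,\vartheta^{N-k}\ln\epsilon\bigr)\longrightarrow 1\qquad(\epsilon\to 0),
\]
so the smallness $\epsilon^{s}$ is completely destroyed by the exponentiation: the final error term is of order $M$, not $C\sigma^{\min(\alpha,s)/2}M$ as your displayed conclusion asserts. The difficulty is not, as you suggest, that the band corrections might accumulate faster than $e^{e^{c/\sqrt{\sigma}}}$; it is that they enter through the interpolation exponents and lose all smallness. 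The paper avoids this by never restoring the interval inside the iteration: it applies Theorem \ref{theorem2.1} on the nested intervals $I_j=(t_0+j\epsilon,\,t_1-j\epsilon)$, so that $I_{j+1}=(t_0^j+\epsilon,t_1^j-\epsilon)$ and the recursion stays purely multiplicative, $\Lambda_{j+1}\le C\rho^{-3}M^{1-\vartheta}\Lambda_j^{\vartheta}$. The total time loss after $N$ steps is only $N\epsilon\sim\sqrt{\epsilon}$, and Hardy's inequality is invoked \emph{once}, at the very end, to restore the bands $(t_0,t_0+N\epsilon)$ and $(t_1-N\epsilon,t_1)$ on the left-hand side with the harmless error $(N\epsilon)^{s}M\sim\epsilon^{s/2}M$. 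You should replace your per-step Hardy correction by this nested-interval scheme; the rest of your argument then goes through as in the paper. A secondary remark: on a merely Lipschitz boundary the inward normal $\mathbf{n}(\xi)$ is only defined a.e.\ and the segment $\xi-t\mathbf{n}(\xi)$ need not lie in $\Omega$; the interior point at depth $\sim\sqrt{\sigma}$ must be taken along the cone direction $\xi(\tilde{x})$ provided by the uniform interior cone property, which is also what makes the construction (and hence $\omega$) uniform over $\Gamma$.
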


\begin{proof}
Since $\Omega$ is Lipschitz, it has the uniform interior cone property (see for instance \cite{HP}). That is there exist $R>0$ and $\theta \in \left]0,\frac{\pi}{2}\right[$ so that, for any $\tilde{x}\in \Gamma$, we may find $\xi =\xi (\tilde{x})\in \mathbb{S}^{n-1}$ for which
\[
\mathcal{C}(\tilde{x})=\{x\in \mathbb{R}^n;\; 0<|x-\tilde{x}|<R,\; (x-\tilde{x})\cdot \xi >|x-\tilde{x}|\cos \theta \}\subset \Omega .
\]

Fix $\tilde{x}\in \Gamma$ and let $\xi =\xi (\tilde{x})$ be as in the definition above. 

Let $0<\epsilon \le \epsilon_0<\min \left\{ ((3R/(2\sin \theta ))^2,(t_1-t_0)/4\right\}$, and set  $y_0=y_0(\tilde{x})=\tilde{x}+(R/2)\xi$ and $\rho=\sqrt{\epsilon}\sin \theta /3$. Let $N=[R/(2\rho)]$, the integer part of $R/(2\rho)$, if $R/(2\rho)\not\in \mathbb{N}$ and $N=R/(2\rho)-1$ if $R/(2\rho)\in \mathbb{N}$. Define then
\[
x_0=\tilde{x}+(R/2-N\rho)\xi .
\]
Furthermore, consider the sequence
\[
y_j=\tilde{x}+(R/2-j\rho)\xi ,\quad 0\le j\le N.
\]
By construction, $B(y_j,3\rho)\subset \mathcal{C}(\tilde{x})$, $0\le j\le N$ and, as $|y_{j+1}-y_j|=\rho$, we have
\[
B(y_{j+1},\rho)\subset B(y_j,2\rho),\quad 0\le j\le N-1.
\]

Let $u\in \mathscr{X}(Q)$. We use in the sequel the temporary notation
\[
M=M(u)=\|u\|_{\mathscr{X}(Q)}.
\] 

Set $I_j=(t_0^j ,t_1^j)$, where $t_i^j=t_i+(-1)^ij\epsilon$, with $i=0,1$ and $0\le j\le N$. Note that \[ N\epsilon \le (R/(2\rho))\epsilon=(3R/(2\sin \theta ))\sqrt{\epsilon}.\] Then $I_N\ne \emptyset$ if $(3R/\sin \theta ) \sqrt{\epsilon}<t_1-t_0$. This condition always holds provided that we substitute $\epsilon_0$ by $\min (\epsilon_0, (t_1-t_0)^2\sin^2\theta /(9R^2))$.

In the rest of this proof $C$ is a generic constant only depending on $\Omega$, $\kappa$, $\alpha$, $s$ and $T_0$.

Using that $I_{j+1}=(t_0^j+\epsilon ,t_1^j-\epsilon )$ and noting that $\rho<\sqrt{\epsilon}$, we get from \eqref{2.1}
\begin{align*}
\rho^3&\|u\|_{L^2(I_{j+1},H^1(B(y_j,\rho )))}
\\
&\hskip 2cm \le CM^{1-\vartheta} \|u\|_{L^2(I_j,H^1(B(x_j,\rho )))}^\vartheta ,\quad 0\le j\le N-1.
\end{align*}
the constant $\vartheta$, only depending  on $\Omega$, $\kappa$ and $T_0$, satisfies $0<\vartheta <1$.

Whence
\[
\|u\|_{L^2(I_N,H^1(B(y_N,\rho )))}\le (C\rho^{-3})^{(1-\beta)/(1-\vartheta)}M^{1-\beta}\|u\|_{L^2(I_0,H^1(B(y_0,\rho )))}^{\beta},
\] 
with $\beta =\vartheta ^{N+1}$.

In this inequality, modifying $C$ if necessary, we may assume that $C\rho^{-3}\ge 1$. Thus
\[
\|u\|_{L^2(I_N,H^1(B(y_N,\rho )))}\le C\rho^{-3/(1-\vartheta)}M^{1-\beta}\|u\|_{L^2(I_0,H^1(B(y_0,\rho )))}^{\beta},
\]

Let $J=I_N$. Since $B(y_0,\rho)\subset B(y_0,R\sin \theta /6)\subset \mathcal{C}(\tilde{x})$ and $y_N=x_0$, the last inequality entails
\begin{equation}\label{+1}
\|u\|_{L^2(J,H^1(B(x_0,\rho )))}\le C\rho^{-3/(1-\vartheta)}M^{1-\beta}\|u\|_{L^2(I_0,H^1(B(y_0,R\sin \theta /6)))}^{\beta}.
\end{equation}

Define
\[
\omega =\bigcup_{\tilde{y}\in \Gamma}B(y_0(\tilde{y}),R\sin \theta /6).
\]
It is worth mentioning that $\omega$ only depends on $\Omega$.

We get from \eqref{+1}
\begin{equation}\label{+2}
\|u\|_{L^2(J,H^1(B(x_0,\rho )))}\le C\rho^{-3/(1-\vartheta)}M^{1-\beta}\|u\|_{L^2(I_0,H^1(\omega)))}^{\beta}.
\end{equation}

Now, since $u$ is H\"older continuous, we have
\[
|u(\tilde{x},t)|\le [u]_\alpha |\tilde{x}-x|^\alpha +|u(x,t)|,\quad x\in B(x_0,\rho ),\; t\in J.
\]
Here and henceforth
\[
[w]_\alpha =\sup_{\underset{(x_1,t_1)\neq (x_2,t_2)}{(x_1,t_1), (x_2,t_2)\in \overline{Q}}}\; \frac{|w(x_1,t_1)-w(x_2,t_2)|}{|x_1-x_2|^\alpha +|t_1-t_2|^{\alpha /2}},\quad w\in C^{\alpha ,\alpha /2}(\overline{Q}).
\]

Whence
\begin{align}
|\mathbb{S}^{n-1}|\rho^n\int_{J}|u(\tilde{x},t)|^2dt&\le 2n[u]_\alpha^2\int_{B(x_0,\rho )\times J} |\tilde{x}-x|^{2\alpha}dxdt \label{24s}
\\
&\hskip 1.5cm + 2n\int_{B(x_0,\rho )\times J} |u(x,t)|^2dxdt, \nonumber
\end{align}
Similarly, where $1\le i\le n$,
\begin{align}
|\mathbb{S}^{n-1}|\rho^n\int_{J}|\partial _iu(\tilde{x},t)|^2dt&\le 2n[\partial _iu]_\alpha^2\int_{B(x_0,\rho )\times J} |\tilde{x}-x|^{2\alpha}dxdt\label{25s}
\\
&\hskip 1.5cm + 2n\int_{B(x_0,\rho )\times J} |\partial _iu(x,t)|^2dxdt.\nonumber
\end{align}

We have
\begin{equation}\label{26s}
|\tilde{x}-x|\leq |\tilde{x}-x_0|+|x_0-x|\le  \rho+ (R/2-N\rho)\le 2\rho.
\end{equation}

Therefore, we have as a consequence of a combination of \eqref{24s}, \eqref{25s} and \eqref{26s}
\[
\int_{J}|u(\tilde{x},t)|^2dt+\int_{J}|\nabla u(\tilde{x},t)|^2dt\le C\left( M^2\rho^{2\alpha}+\rho^{-n}\|u\|_{L^2(J,H^1(B(x_0,\rho )))}\right)
\]
which, in light of \eqref{+2}, yields
\begin{align*}
\int_{J}|u(\tilde{x},t)|^2dt&+\int_{J}|\nabla u(\tilde{x},t)|^2dt\le 
\\
&C\left( M^2\rho^{2\alpha}+\rho^{-n-6/(1-\vartheta)}M^{2(1-\beta)}\|u\|_{L^2(I_0,H^1(\omega))}^{2\beta}\right).
\end{align*}
Integrating over $\Gamma$ both sides of this inequality with respect to $\tilde{x}$, we find
\begin{align}
\|u\|&_{L^2(\Gamma \times J)}+\|\nabla u\|_{L^2(\Gamma \times J)}\le \label{+4}
\\
&C\left( M\rho^{\alpha}+\rho_0^{-n/2-3/(1-\vartheta)}M^{1-\beta}\|u\|_{L^2(I_0,H^1(\omega))}^{\beta}\right).\nonumber
\end{align}

Bearing in mind that $J=(t_0+N\epsilon ,t_1+N\epsilon )$, we get by applying Corollary \ref{corollaryH}, for some fixed $s\in \left( 0,1/2\right)$,
\begin{align*}
&\|u\|_{L^2((t_0,t_0+N\epsilon),L^2(\Gamma ))} \le  c(N\epsilon)^s\|u\|_{H^1((t_0 ,t_1),L^2(\Gamma ))},
\\
&\|u\|_{L^2((t_1-N\epsilon ,t_1),L^2(\Gamma ))}\le  c(N\epsilon)^s\|u\|_{H^1((t_0 ,t_1),L^2(\Gamma ))},
\\
&\|\nabla u\|_{L^2((t_0,t_0+N\epsilon ),L^2(\Gamma ))}\le c(N\epsilon)^s \|\nabla  u\|_{H^1((t_0 ,t_1),L^2(\Gamma ))},
\\
&\|\nabla  u\|_{L^2((t_1-N\epsilon ,t_1),L^2(\Gamma ))}\le  c(N\epsilon)^s \|\nabla  u\|_{H^1((t_0 ,t_1),L^2(\Gamma ))}.
\end{align*}

Therefore, as the trace operator
\[
u\in H^1((t_0,t_1),H^2(\Omega ))\rightarrow (u,\nabla u)_{|\Sigma }\in H^1((t_0,t_1),L^2(\Gamma ))^{n+1}, 
\]
is bounded, we obtain
\begin{align*}
&\|u\|_{L^2((t_0,t_0+N\epsilon),L^2(\Gamma ))}\le c(N\epsilon)^s\|u\|_{H^1((t_0 ,t_1),H^2(\Omega ))},
\\
&\|u\|_{L^2((t_1-N\epsilon ,t_1),L^2(\Gamma ))}\le  c(N\epsilon)^s\|u\|_{H^1((t_0 ,t_1),H^2(\Omega ))},
\\
&\|\nabla u\|_{L^2((t_0,t_0+N\epsilon ),L^2(\Gamma ))}\le c(N\epsilon)^s \|u\|_{H^1((t_0 ,t_1),H^2(\Omega ))},
\\
&\|\nabla  u\|_{L^2((t_1-N\epsilon ,t_1),L^2(\Gamma ))}\le  c(N\epsilon)^s \|u\|_{H^1((t_0 ,t_1),H^2(\Omega ))}.
\end{align*}

These inequalities together  with \eqref{+4} give
\begin{align*}
&C\left(\|u\|_{L^2(\Sigma )}+\|\nabla u\|_{L^2(\Sigma )}\right)\le (\rho^{\alpha}+(N\epsilon)^s)M
\\
&\hskip 3cm +\rho^{-n/2-3/(1-\vartheta)}M^{1-\beta}\|u\|_{L^2(I_0,H^1(\omega))}^{\beta}.
\end{align*}
We obtain by applying Young's inequality to the last term 
\begin{align}
&C\left(\|u\|_{L^2(\Sigma )}+\|\nabla u\|_{L^2(\Sigma )}\right)\le (\rho^{\alpha}+(N\epsilon)^s+\epsilon^{\alpha/2})M \label{+5}
\\
&\hskip 1cm +\rho^{-(n/2+3/(1-\vartheta))/\beta}\epsilon^{-(1-\beta)\alpha/(2\beta)}\|u\|_{L^2(I_0,H^1(\omega))}.\nonumber
\end{align}
Next, we have
\begin{equation}\label{+6}
\rho^{\alpha}+(N\epsilon)^s+\epsilon^{\alpha/2}\le C\epsilon^{\min\{\alpha ,s\}/2}
\end{equation}
and, as $\beta=\theta ^{N+1}$, we have $\beta=O(e^{-c/\sqrt{\epsilon}})$, from which we deduce in a straightforward manner that
\begin{equation}\label{+7}
\rho^{-(n/2+3/(1-\vartheta))/\beta}\epsilon^{-(1-\beta)\alpha/(2\beta)}\le Ce^{e^{c\sqrt{\epsilon}}}.
\end{equation}
We end up by observing that \eqref{+6} and \eqref{+7} in \eqref{+5} give the expected inequality.
\end{proof}

The a priori estimate in the following lemma is well adapted to our purpose. It does not involve neither the initial time data nor the final time data.

\begin{lemma}\label{lemma3.1}
There exists a constant $C>0$, only depending on $\Omega$, $\kappa$ and $T_0$, so that, for any $u\in H^1((t_0,t_1),H^2(\Omega ))$ satisfying $Lu=0$ in $Q$, we have
\begin{equation}\label{3.2}
C\|u\|_{L^2((t_0 ,t_1),H^1(\Omega ))}\le \|u\|_{H^1((t_0,t_1),L^2(\Gamma ))}+\|\nabla u\|_{L^2((t_0,t_1),L^2(\Gamma ))}.
\end{equation}
\end{lemma}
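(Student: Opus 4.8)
The plan is to bound the two pieces of $\|u\|_{L^2((t_0,t_1),H^1(\Omega))}^2=\int_Qu^2+\int_Q|\nabla u|^2$ separately, writing $\mathcal{B}=\|u\|_{H^1((t_0,t_1),L^2(\Gamma))}+\|\nabla u\|_{L^2((t_0,t_1),L^2(\Gamma))}$ for the right-hand side. The control of $\int_Qu^2$ is elementary and already displays the feature stressed in the remark, that no datum at $t_0$ or $t_1$ is needed. Multiplying $Lu=0$ by $(t-t_0)u$ and integrating by parts over $Q$, the factor $t-t_0$ kills the contribution at $t=t_0$, while the one at $t=t_1$ equals $(t_1-t_0)\tfrac12\int_\Omega u(\cdot,t_1)^2\ge 0$ and may be discarded; using \eqref{0.1}, \eqref{0.2} and the boundedness of the trace this gives
\[
\int_Qu^2+\int_Q(t-t_0)A\nabla u\cdot\nabla u\le C\int_\Sigma|A\nabla u\cdot\mathbf{n}|\,|u|\,d\sigma dt\le C\|\nabla u\|_{L^2(\Sigma)}\|u\|_{L^2(\Sigma)}\le C\mathcal{B}^2 .
\]
Thus $\int_Qu^2\le C\mathcal{B}^2$ unconditionally, together with the weighted gradient bound $\int_Q(t-t_0)|\nabla u|^2\le C\mathcal{B}^2$, which already controls $|\nabla u|$ on any $\Omega\times(t_0+\eta,t_1)$.

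The bound for $\int_Q|\nabla u|^2$ is the real issue, and here the energy method alone cannot close. Testing $Lu=0$ against $u$ yields $\int_QA\nabla u\cdot\nabla u=\tfrac12\int_\Omega u(\cdot,t_0)^2-\tfrac12\int_\Omega u(\cdot,t_1)^2+\int_\Sigma u(A\nabla u\cdot\mathbf{n})$, so what must be estimated is the \emph{energy difference} between the initial and final times, not either term by itself. The individual quantity $\tfrac12\int_\Omega u(\cdot,t_0)^2$ is genuinely not dominated by $\mathcal{B}$: a single spatial mode on a short time interval makes the ratio blow up as $t_1-t_0\to0$. The object that does control the difference is quantitative unique continuation, i.e. the Carleman inequality of Theorem \ref{theorem1.1+}. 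I would apply \eqref{1.3+} to $u$ with $Lu=0$ and $\lambda=\lambda_0$, $\tau=\tau_0$ fixed. Since $\varphi\ge0$ and each coefficient $\chi^k$ grows at $t=t_0,t_1$ only polynomially in $g(t)$ while $e^{-2\tau_0\varphi}$ decays exponentially there, the weights $\chi^3e^{-2\tau_0\varphi}$, $\chi e^{-2\tau_0\varphi}$ and $(\lambda_0\tau_0\chi)^{-1}e^{-2\tau_0\varphi}$ are bounded on $\overline{Q}$ and $\overline{\Sigma}$; hence the right-hand side of \eqref{1.3+} is dominated by $C\mathcal{B}^2$, and the left-hand side controls $\int(u^2+|\nabla u|^2)$ on every interior subcylinder. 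It is precisely the vanishing of this weight at $t_0$ and $t_1$ that suppresses the initial/final data, the constants depending only on $\Omega$, $\kappa$, $T_0$ once $\psi$ is fixed in terms of $\Omega$.

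Combining the interior bound from Theorem \ref{theorem1.1+} with the energy bound $\int_Q(t-t_0)|\nabla u|^2\le C\mathcal{B}^2$ of the first step, which already covers a full neighbourhood of $t_1$, reduces the claim to estimating $|\nabla u|$ on a thin strip $(t_0,t_0+\eta)$ about the initial time. This initial-time strip is the point I expect to be the main obstacle. It cannot be reached by propagating the interior control (that would be backward parabolic continuation), and it cannot be reached by an energy identity on the strip, which merely reproduces the uncontrolled quantity $\int_\Omega u(\cdot,t_0)^2$; both the weight $t-t_0$ and the Carleman weight degenerate there and, by themselves, even permit concentration of the gradient near $t_0$. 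Overcoming this requires exploiting the over-determination carried by the flux data on the \emph{whole} of $\Gamma$ through the Carleman estimate adapted to the cylinder $(t_0,t_1)$, and verifying that no constant picks up a dependence on a lower bound for $t_1-t_0$. This is the technical heart of the argument; the remaining passage from the resulting control of $\int_Q|\nabla u|^2$ and $\int_Qu^2$ to the stated inequality is then immediate.
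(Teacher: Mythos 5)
Your first step contains a sign error that invalidates the ``elementary'' part of the plan. For the forward operator $L=\mathrm{div}(A\nabla\cdot)-\partial_t$, testing $Lu=0$ against $(t-t_0)u$ gives the identity
\begin{equation*}
\tfrac12\int_Q u^2\,dxdt=\int_Q(t-t_0)A\nabla u\cdot\nabla u\,dxdt+\tfrac{t_1-t_0}{2}\int_\Omega u^2(\cdot,t_1)\,dx-\int_\Sigma (t-t_0)\,u\,(A\nabla u\cdot\mathbf{n})\,d\sigma dt,
\end{equation*}
so $\int_Q u^2$ and the weighted gradient term sit on \emph{opposite} sides: you cannot move both to the left and discard the $t_1$-term, and neither $\int_Q u^2\le C\mathcal{B}^2$ nor $\int_Q(t-t_0)|\nabla u|^2\le C\mathcal{B}^2$ follows. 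The claimed intermediate bound is in fact false as stated: for $u=e^{-t}\sin x$ on $\Omega=(0,\pi)$ one has $u|_\Sigma\equiv 0$, so your right-hand side $C\int_\Sigma|A\nabla u\cdot\mathbf{n}|\,|u|$ vanishes while $\int_Q u^2>0$. (The weight $(t_1-t)$ puts the signs right but then the uncontrolled datum $\int_\Omega u^2(\cdot,t_0)$ appears on the right, which is precisely what is unavailable.) Consequently your coverage of a neighbourhood of $t_1$ also collapses, and both end strips remain unestimated.

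That remaining piece --- the strips near $t_0$ and $t_1$ --- is exactly what you declare to be ``the technical heart'' and then leave unproven, and it is the actual content of the paper's proof. The paper closes it by establishing the observability-type inequality $C\bigl(\|u(\cdot,t_0)\|_{L^2(\Omega)}+\|u(\cdot,t_1)\|_{L^2(\Omega)}\bigr)\le\mathcal{B}$: the Carleman estimate of Theorem \ref{theorem1.1+} (used much as in your second step, which is sound) gives $C\|u\|_{L^2(\Omega\times(s_0,s_1))}\le\mathcal{B}$ on the middle half of the time interval; then an energy identity for $v=\psi u$, with $\psi$ a time cutoff vanishing near $t_0$ and equal to $1$ near $t_1$ (so that $v(\cdot,t_0)=0$ and the source $\psi'u$ is supported where the interior bound applies), controls $\|u(\cdot,t_1)\|_{L^2(\Omega)}$, and the backward-parabolic analogue controls $\|u(\cdot,t_0)\|_{L^2(\Omega)}$. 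These endpoint bounds are fed back into a cut-off energy estimate (for $v=e^{-t}u$, whose zeroth-order term supplies the $\int_Q u^2$ control you tried to get from the time weight) together with Hardy's inequality (Corollary \ref{corollaryH}) to absorb the end strips. Without an argument of this observability type, your proof does not close.
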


\begin{proof} 
In this proof $C$ is a generic constant that can only depend on $\Omega$, $\kappa$ and $T_0$.

Let $u\in H^1((t_0,t_1),H^2(\Omega ))$ satisfying $Lu=0$ in $Q$ and set $v=e^{-t}u$. Then $v$ solves the following equation
\begin{equation}\label{32s}
\mbox{div}(A\nabla v)-v-\partial_tv=0\;\; \mbox{in}\; Q.
\end{equation}
Let $0<\epsilon <(t_1-t_0)/2$ and choose $\chi \in C_0^\infty ((t_0,t_1))$ satisfying $0\le \chi \le 1$, $\chi =1$ in $(t_0+\epsilon ,t_1-\epsilon )$ and, for some universal constant $c$, $|\chi '|\le c/\epsilon$.

We multiply \eqref{32s} by $\chi v$ and integrate over $Q$. We then get by making an integration by parts 
\[
-\int_Q \chi A\nabla v \cdot \nabla vdxdt-\int_Q\chi v^2dxdt+\int_\Sigma \chi v A\nabla v\cdot \nu d\sigma dt +\frac{1}{2}\int_Q v^2\chi ' dxdt =0,
\]
from which we deduce in a straightforward manner
\begin{align}
\int_Q \chi A\nabla u \cdot \nabla udxdt&+\int_Q\chi u^2dxdt \le \label{33s}
\\
&e^{2(t_1-t_0)}\left( C\int_\Sigma (u^2+|\nabla u|^2)d\sigma dx +\frac{1}{2}\int_Q u^2|\chi ' |dxdt\right).\nonumber
\end{align}
On the other hand, as $\mbox{supp}(\chi ')\subset (t_0,t_1)\setminus (t_0+\epsilon ,t_1-\epsilon )$, we have

\[
\mathscr{J}_\epsilon^2=\int_Q u^2|\chi ' |dxdt\le \frac{c}{\epsilon} \int_{t_0}^{t_0-\epsilon}\int_\Omega u^2dxdt + \frac{c}{\epsilon}\int_{t_1-\epsilon }^{t_1}\int_\Omega u^2dxdt .
\]
Therefore
\begin{equation}\label{34s}
\limsup_{\epsilon \rightarrow 0}\mathscr{J}_\epsilon ^2 \le \int_\Omega u^2(x ,t_0)dx+\int_\Omega u^2(x,t_1)dx.
\end{equation}

We rewrite \eqref{33s}  in the form
\begin{equation}\label{33s+}
C\|u\|_{L^2((t_0+\epsilon ,t_1-\epsilon ),H^1(\Omega ))}\le \|u\|_{L^2((t_0,t_1),L^2(\Gamma ))}+\|\nabla u\|_{L^2((t_0,t_1),L^2(\Gamma ))}+\mathscr{J}_\epsilon .
\end{equation}
We apply Hardy's inequality in Corollary \ref{corollaryH}. We obtain
\[
\|u\|_{L^2((t_0 ,t_0+\epsilon),H^1(\Omega ))},\; \|u\|_{L^2((t_1-\epsilon ,t_1),H^1(\Omega ))}\le C\epsilon^s \|u\|_{H^1((t_0 ,t_1),H^1(\Omega ))}.
\]

This and \eqref{33s+} yield
\begin{align*}
C\|u\|_{L^2((t_0 ,t_1),H^1(\Omega ))}\le \|u\|_{L^2((t_0,t_1),L^2(\Gamma ))}&+\|\nabla u\|_{L^2((t_0,t_1),L^2(\Gamma ))}
\\
&+\epsilon^s \|u\|_{H^1((t_0 ,t_1),H^1(\Omega ))}+\mathscr{J}_\epsilon.
\end{align*}
Making $\epsilon \rightarrow 0$, we get by using \eqref{34s}
\begin{align*}
C\|u\|_{L^2((t_0 ,t_1),H^1(\Omega ))}\le \|u\|_{L^2((t_0,t_1),L^2(\Gamma ))}&+\|\nabla u\|_{L^2((t_0,t_1),L^2(\Gamma ))}
\\
&+\|u(\cdot ,t_0)\|_{L^2(\Omega )}+\|u(\cdot ,t_1)\|_{L^2(\Omega )}.
\end{align*}
We complete the proof by using the following inequality
\[
C\left(\|u(\cdot ,t_0)\|_{L^2(\Omega )}+\|u(\cdot ,t_1)\|_{L^2(\Omega )}\right)\le \|u\|_{H^1((t_0,t_1),L^2(\Gamma ))}+\|\nabla u\|_{L^2((t_0,t_1),L^2(\Gamma ))}.
\]
To prove this inequality we proceed similarly to the proof of observability inequalities for parabolic equation. First, if $s_0=(3t_0+t_1)/4$ and $s_1=(t_0+3t_1)/4$, we get as a straightforward consequence of the Carleman inequality in Theorem \ref{theorem1.1+},
\begin{equation}\label{34s.1}
C\|u\|_{L^2((s_0,s_1)\times \Omega )}\le \|u\|_{H^1((t_0,t_1),L^2(\Gamma ))}+\|\nabla u\|_{L^2((t_0,t_1),L^2(\Gamma ))}.
\end{equation}
Next pick $\psi \in C^\infty ([t_0,t_1])$ so that $0\le \psi \le 1$, $\psi =0$ in $[t_0,s_0]$ and $\psi =1$ in $[s_1,t_1]$. Then $v=\psi u$ is the solution of the IBVP
\[
\left\{
\begin{array}{lll}
-\mbox{div}(A\nabla v)+\partial_t v=\psi 'u \;\; &\mbox{in}\; Q,
\\
v=u &\mbox{on}\; \Sigma ,
\\
v(\cdot ,t_0)=0.
\end{array}
\right.
\]
Hence
\[
\int_Q A\nabla v\cdot \nabla vdxdt+\int_\Sigma v(A\nabla v\cdot \nu) +\frac{1}{2}\int_Q \partial _t v^2dxdt=\int_Q \psi 'u v dxdt.
\]
But
\[
\int_Q \partial _t v^2dxdt=\int_\Omega v^2(x,t_1)dx.
\]
That  is we have
\[
\int_Q A\nabla v\cdot \nabla vdxdt+\int_\Sigma vA\nabla v\cdot \nu +\frac{1}{2}\int_\Omega v^2(x,t_1)dx=\int_Q \psi 'uvdxdt.
\]
We deduce from this identity
\begin{align}
&C\left(\|v(\cdot ,t_1)\|_{L^2(\Omega )}^2+\|\nabla v\|_{L^2(Q)}^2\right) \label{34s.2}
\\
&\hskip 1cm\le \|u\|_{L^2((t_0,t_1),L^2(\Gamma ))}^2+\|\nabla u\|_{L^2((t_0,t_1),L^2(\Gamma ))}^2 +\|\psi 'u\|_{L^2(Q)}\|v\|_{L^2(Q)}.\nonumber
\end{align}
Noting that 
\[
w\rightarrow \left(\int_\Omega |\nabla w|^2dx +\int_\Gamma w^2(x)d\sigma(x)\right)^{\frac{1}{2}}
\]
defines an equivalent norm on $H^1(\Omega )$, we get
\[
\|v\|_{L^2(Q)}^2\le c_\Omega\left(\|\nabla v\|_{L^2(Q)}^2 +\|u\|_{L^2((t_0,t_1),L^2(\Gamma ))}^2\right).
\]
We obtain then from Young's inequality 
\[
\|\psi 'u\|_{L^2(Q)}\|v\|_{L^2(Q)}\le \frac{1}{2\epsilon}\|\psi 'u\|_{L^2(Q)}^2+\frac{c_\Omega\epsilon}{2}\|\nabla v\|_{L^2(Q)}^2 +\frac{c_\Omega\epsilon}{2}\|u\|_{L^2((t_0,t_1),L^2(\Gamma ))}^2.
\]
This inequality in \eqref{34s.2}, with $\epsilon$ sufficiently small, yields
\begin{align}
C\|u(\cdot ,t_1)\|_{L^2(\Omega )}&=C\|v(\cdot ,t_1)\|_{L^2(\Omega )}\label{34s.3}
\\
&\le \|u\|_{L^2((t_0,t_1),L^2(\Gamma ))}+\|\nabla u\|_{L^2((t_0,t_1),L^2(\Gamma ))}+ \|\psi 'u\|_{L^2(Q )}.\nonumber
\end{align}
Bearing in mind that $\mbox{supp}(\psi ')\subset [s_0,s_1]$, we deduce from \eqref{34s.1} 
\begin{align*}
\|\psi 'u\|_{L^2(Q )}&\le C\|u\|_{L^2(\Omega \times (s_0,s_1))}
\\
&\le \|u\|_{H^1((t_0,t_1),L^2(\Gamma ))}+\|\nabla u\|_{L^2((t_0,t_1),L^2(\Gamma ))}.
\end{align*}
This in \eqref{34s.3} gives
\[
C\|u(\cdot ,t_1)\|_{L^2(\Omega )}\le \|u\|_{H^1((t_0,t_1),L^2(\Gamma ))}+\|\nabla u\|_{L^2((t_0,t_1),L^2(\Gamma ))}.
\]
As the Carleman estimate in Theorem \ref{theorem1.1+} still holds for the backward parabolic equation $\mbox{div}(A\nabla u)+\partial _tu=0$, we have similarly 
\[
C\|u(\cdot ,t_0)\|_{L^2(\Omega )}\le \|u\|_{H^1((t_0,t_1),L^2(\Gamma ))}+\|\nabla u\|_{L^2((t_0,t_1),L^2(\Gamma ))}.
\]
The proof is then complete.
\end{proof}

If $u\in \mathscr{Y}(Q)$ satisfies $Lu=0$ in $Q$ then $\partial_tu\in \mathscr{X}(Q)$ and $L\partial_t u=0$ in $Q$.
Proposition \ref{proposition3.1} applied to both $u$ and $\partial_tu$ together with Lemma \ref{lemma3.1} give the following result.

\begin{corollary}\label{corollary3.1}
Let $s\in (0,1/2)$. There exist $\omega \Subset \Omega$, only depending on $\Omega$, and three constants $c>0$, $C>0$ and $\sigma _0>0$, only depending on $\Omega$, $\kappa$,  $T_0$, $s$ and $\alpha$, so that, for any $u\in \mathscr{Y}(Q)$ satisfying $Lu=0$ in $Q$ and $0<\sigma <\sigma _0$, we have
\begin{equation}\label{3.3}
\|u\|_{L^2((t_0 ,t_1),H^1(\Omega ))}\le C\left(\sigma ^{\min\{\alpha ,s\}/2} \|u\|_{\mathscr{Y}(Q)} +e^{e^{c/\sqrt{\sigma}}}\|u\|_{H^1((t_0,t_1),H^1(\omega ))}\right).
\end{equation}
\end{corollary}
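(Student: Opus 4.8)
The plan is to combine the a priori estimate of Lemma \ref{lemma3.1} with two applications of Proposition \ref{proposition3.1}, one to $u$ itself and one to $\partial_t u$. The key preliminary observation, already recorded just above the statement, is that for $u\in\mathscr{Y}(Q)$ with $Lu=0$ one has $\partial_t u\in\mathscr{X}(Q)$ and $L\partial_t u=0$ in $Q$; this follows by differentiating the equation in $t$, which is licit precisely because of the extra time regularity built into $\mathscr{Y}(Q)$. This is the only place where the passage from $\mathscr{X}(Q)$ to $\mathscr{Y}(Q)$ is really exploited.

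First I would invoke Lemma \ref{lemma3.1} to bound the target interior norm by boundary data:
\[
C\|u\|_{L^2((t_0 ,t_1),H^1(\Omega ))}\le \|u\|_{H^1((t_0,t_1),L^2(\Gamma ))}+\|\nabla u\|_{L^2((t_0,t_1),L^2(\Gamma ))}.
\]
Since $\|u\|_{H^1((t_0,t_1),L^2(\Gamma ))}\le \|u\|_{L^2((t_0,t_1),L^2(\Gamma ))}+\|\partial_t u\|_{L^2((t_0,t_1),L^2(\Gamma ))}$, it suffices to control the three boundary quantities $\|u\|_{L^2((t_0,t_1),L^2(\Gamma ))}$, $\|\nabla u\|_{L^2((t_0,t_1),L^2(\Gamma ))}$ and $\|\partial_t u\|_{L^2((t_0,t_1),L^2(\Gamma ))}$.

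The first two are handled directly by Proposition \ref{proposition3.1} applied to $u$, which bounds their sum by $C\bigl(\sigma^{\min(\alpha,s)/2}\|u\|_{\mathscr{X}(Q)}+e^{e^{c/\sqrt{\sigma}}}\|u\|_{L^2((t_0,t_1),H^1(\omega))}\bigr)$. For the third, I would apply Proposition \ref{proposition3.1} to $\partial_t u$ (legitimate by the opening observation) and simply discard the nonnegative term $\|\nabla\partial_t u\|_{L^2((t_0,t_1),L^2(\Gamma ))}$ on the left, obtaining a bound of $\|\partial_t u\|_{L^2((t_0,t_1),L^2(\Gamma ))}$ by $C\bigl(\sigma^{\min(\alpha,s)/2}\|\partial_t u\|_{\mathscr{X}(Q)}+e^{e^{c/\sqrt{\sigma}}}\|\partial_t u\|_{L^2((t_0,t_1),H^1(\omega))}\bigr)$. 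Adding the two estimates and using the definitions $\|u\|_{\mathscr{X}(Q)}+\|\partial_t u\|_{\mathscr{X}(Q)}=\|u\|_{\mathscr{Y}(Q)}$ together with $\|u\|_{L^2((t_0,t_1),H^1(\omega))}+\|\partial_t u\|_{L^2((t_0,t_1),H^1(\omega))}\le C\|u\|_{H^1((t_0,t_1),H^1(\omega))}$, the right-hand side collapses into exactly the claimed form; chaining this with the Lemma \ref{lemma3.1} bound finishes the argument.

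The proof carries no genuine analytic obstacle: it is essentially bookkeeping. The one point that must be verified is the consistency of the data across the two uses of Proposition \ref{proposition3.1}. The same subdomain $\omega$ and the same threshold $\sigma_0$ serve both applications, since the former depends only on $\Omega$ and the latter only on $\Omega,\kappa,T_0,s,\alpha$; for the remaining constants one simply takes the larger of the two values of $c$ and of $C$. Because the two factors $e^{e^{c/\sqrt{\sigma}}}$ are of identical form, their sum is absorbed into a single such factor, and likewise the two powers $\sigma^{\min(\alpha,s)/2}$ combine trivially, yielding the stated inequality.
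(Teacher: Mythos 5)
Your argument is correct and is exactly the route the paper takes: the authors state in one line that the corollary follows from Proposition \ref{proposition3.1} applied to both $u$ and $\partial_t u$ (using $\partial_t u\in\mathscr{X}(Q)$ and $L\partial_t u=0$) combined with Lemma \ref{lemma3.1}, which is precisely the bookkeeping you carry out. Your write-up simply makes explicit the details the paper leaves to the reader.
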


We now quantify the uniqueness of continuation from an interior subdomain to an another interior subdomain. Prior to that, we define the geometric distance $d_g^D$ on a bounded domain $D$ of $\mathbb{R}^n$ by
\[
d_g^D(x,y)=\inf\left \{ \ell (\psi ) ;\; \psi :[0,1]\rightarrow D \; \mbox{Lipschitz path joining}\; x \; \mbox{to}\; y\right\},
\] 
where
\[
\ell (\psi )= \int_0^1\left|\dot{\psi}(t)\right|dt
\]
is the length of $\psi$.

Observe that, according to Rademacher's theorem, any Lipschitz continuous function $\psi :[0,1]\rightarrow D$ is almost everywhere differentiable with $\left|\dot{\psi}(t)\right|\le k$ a.e. $t\in [0,1]$, where $k$ is the Lipschitz constant of $\psi$ (see for instance \cite[Theorem 4, page 279]{Ev}). In particular, $\ell (\psi )$ is well defined.

The following lemma will be used to prove the next proposition. We provide its proof  in Appendix \ref{A}.

\begin{lemma}\label{Glemma}
Let $D$ be a bounded Lipschitz domain of $\mathbb{R}^n$. Then $d_g^D\in L^\infty (D \times D )$.
\end{lemma}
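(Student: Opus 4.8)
The plan is to reduce the statement to a local boundedness property and then invoke the compactness of $\overline{D}$. Fix once and for all a reference point $x_0\in D$. Since $d_g^D$ obviously satisfies the triangle inequality, we have
\[
d_g^D(x,y)\le d_g^D(x,x_0)+d_g^D(x_0,y),\quad x,y\in D,
\]
so it suffices to produce a single constant $K>0$ with $d_g^D(x_0,x)\le K$ for every $x\in D$. Write $h(x)=d_g^D(x_0,x)$. As a preliminary remark, $h$ is everywhere finite on $D$: because $D$ is open and connected it is polygonally connected, and any polygonal path joining $x_0$ to $x$ inside $D$ has finite length, so $h(x)<\infty$ for each $x\in D$.

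Next I would establish that $h$ is \emph{locally bounded} on $\overline{D}$, meaning that every point of $\overline{D}$ has a neighbourhood $V$ with $\sup_{V\cap D}h<\infty$. For an interior point $z\in D$ this is immediate: choosing $r>0$ with $B(z,r)\subset D$, any $x\in B(z,r)$ is joined to $z$ by the straight segment, whence $h(x)\le h(z)+|x-z|\le h(z)+r<\infty$. The substantial case is a boundary point $\tilde x\in\Gamma$. Here I would use the defining property of a Lipschitz domain: there is a coordinate cylinder $V$ around $\tilde x$ and a coordinate system in which $D\cap V=\{(x',x_n):x_n>\phi(x')\}\cap V$ for a Lipschitz function $\phi$. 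Fix a reference point $z_{\tilde x}\in D\cap V$ at a fixed height above $\tilde x$. For an arbitrary $x=(x',x_n)\in D\cap V$ I would join $x$ to $z_{\tilde x}$ by an explicit three-leg path inside $D\cap V$: move vertically upward from $x$ to a common height $H$ chosen strictly above $\sup\phi$ over the horizontal projection of $V$, then move horizontally at that height, then descend vertically to $z_{\tilde x}$. Each leg stays above the graph of $\phi$, hence in $D$, and its total length is bounded by a constant $C_{\tilde x}$ depending only on the chart (on the Lipschitz constant of $\phi$ and the diameter of $V$), not on $x$. Consequently
\[
h(x)\le h(z_{\tilde x})+d_g^D(z_{\tilde x},x)\le h(z_{\tilde x})+C_{\tilde x},\quad x\in D\cap V,
\]
and since $h(z_{\tilde x})<\infty$ this gives $\sup_{V\cap D}h<\infty$.

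Finally I would conclude by compactness. The neighbourhoods produced above, one for each point of $\overline{D}$, form an open cover of the compact set $\overline{D}$; extracting a finite subcover $V_1,\dots,V_m$ and setting $K=\max_{1\le i\le m}\sup_{V_i\cap D}h$ yields $h\le K$ on all of $D$. Hence $d_g^D\le 2K$ on $D\times D$, so $d_g^D\in L^\infty(D\times D)$.

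I expect the only delicate point to be the boundary construction of the bounded-length path: one must verify that the vertical and horizontal legs stay above the graph of $\phi$ (so that they remain inside $D$) while keeping the total length controlled uniformly in $x$. This is precisely where the Lipschitz hypothesis on $\Gamma$ enters, and it is what prevents the geometric distance from blowing up as one approaches the boundary. Everything else is a routine triangle-inequality-and-compactness argument.
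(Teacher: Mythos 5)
Your proof is correct, and it follows the same broad strategy as the paper (exploit the local Lipschitz structure of $\Gamma$ to get uniformly bounded path lengths near the boundary, then conclude by compactness of $\overline{D}$), but the two implementations differ in a way worth noting. The paper uses the bi-Lipschitz chart form of the Lipschitz condition: a homeomorphism $\phi:Q\to U$ with $\phi$, $\phi^{-1}$ Lipschitz mapping the convex half-cylinder $Q_-$ onto $D\cap U$, and joins $x,y\in D\cap U$ by the image under $\phi$ of the straight segment between $\phi^{-1}(x)$ and $\phi^{-1}(y)$; this yields the stronger local estimate $d_g^D(x,y)\le k|x-y|$, which the authors reuse in Corollary \ref{corollaryA} to prove $d_g^D\le\varkappa\,|\cdot-\cdot|$ globally. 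You instead use the epigraph form of the Lipschitz condition and an explicit three-leg staircase path; this only gives a uniform bound $C_{\tilde x}$ on the local geodesic distance rather than a linear one, which suffices for the lemma but would not feed directly into the corollary. Your global assembly is also organized differently, and arguably more cleanly: you fix a base point, reduce to bounding $h=d_g^D(x_0,\cdot)$, and combine local boundedness on $\overline{D}$ with a finite subcover, whereas the paper chains overlapping charts and asserts (somewhat informally) that any two points can be joined by a path made of a bounded number of segments and chart paths. The one point you should nail down explicitly is the routine one you already flag: the horizontal leg at height $H$ must remain inside the coordinate cylinder $V$ as well as above the graph of $\phi$, which may require shrinking the horizontal radius of $V$ relative to its height; with that in place the argument is complete.
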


\begin{proposition}\label{proposition3.2}
Let $\omega\Subset \Omega$, $\tilde{\omega}\Subset \Omega$ and $s\in \left(0,1/2\right)$. There exist three constants $\gamma>0$, $C>0$ and $\epsilon_0 >0$, only depending on $\Omega$, $\kappa$, $T_0$, $s$, $\omega$ and $\tilde{\omega}$, so that, for any $u\in H^2((t_0,t_1),H^1(\Omega ))$ satisfying $Lu=0$ in $Q$ and $0<\epsilon <\epsilon_0 $, we have
\begin{equation}\label{3.4}
C\|u\|_{H^1((t_0,t_1),H^1(\tilde{\omega}))}\le \epsilon ^s\|u\|_{H^2((t_0,t_1),H^1(\Omega ))}+e^{\gamma /\epsilon}\|u\|_{H^1((t_0,t_1),H^1(\omega))}.
\end{equation}
\end{proposition}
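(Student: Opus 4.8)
The plan is to quantify the uniqueness of continuation from $\omega$ to $\tilde{\omega}$ by a chain-of-balls argument built on the three-cylinder inequality \eqref{2.1}, using the geometric distance $d_g^\Omega$ to control the number of iterations. First I would reduce to propagating the $L^2_t(H^1_x)$-norm, observing that since $u\in H^2((t_0,t_1),H^1(\Omega))$ solves $Lu=0$, so does $\partial_t u$, with $\partial_t u\in H^1((t_0,t_1),H^1(\Omega))$; thus an estimate for $u$ and the same estimate applied to $\partial_t u$ combine to give the claimed $H^1_t(H^1_x)$ bound. The core task is therefore to prove, for a solution $v$ with $Lv=0$ and fixed $0<\epsilon<\epsilon_0$, an inequality of the form
\[
C\|v\|_{L^2((t_0,t_1),H^1(\tilde{\omega}))}\le \epsilon^s\|v\|_{L^2((t_0,t_1),H^1(\Omega))}+e^{\gamma/\epsilon}\|v\|_{L^2((t_0,t_1),H^1(\omega))},
\]
which is the content I will build and then apply twice.

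Next I would set up the geometry. Fix a point $y_\ast\in\omega$ and an arbitrary target point $\tilde{y}\in\tilde{\omega}$. By Lemma \ref{Glemma} the geometric distance $d_g^\Omega$ is bounded on $\Omega\times\Omega$, so there is a Lipschitz path $\psi$ from $y_\ast$ to $\tilde{y}$ inside $\Omega$ of length at most some constant $D_0$ depending only on $\Omega$. Since $\tilde{\omega}\Subset\Omega$, every point of the relevant paths stays at distance at least some $d_0>0$ from $\Gamma$, so I may choose a radius $r\sim c\sqrt{\epsilon}$ with $r<r_y(\epsilon)=\min(\mbox{dist}(y,\Gamma)/3,\sqrt{\epsilon})$ uniformly. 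I then string balls $B(y_0,r),\dots,B(y_m,r)$ of this fixed radius along $\psi$ with $y_0=y_\ast\in\omega$, $y_m=\tilde{y}$, and $|y_{j+1}-y_j|\le r$ so that $B(y_{j+1},r)\subset B(y_j,2r)$; the number of balls is $m\le D_0/r\le C/\sqrt{\epsilon}$. Applying the three-cylinder inequality \eqref{2.1} at each step on the common time interval $(t_0+\epsilon,t_1-\epsilon)$ gives an interpolation relation linking the norm on $B(y_{j+1},r)$ to those on $B(y_j,r)$ and $B(y_j,3r)$; iterating these $m$ inequalities and bounding every $B(y_j,3r)$-norm by the global norm $\|v\|_{L^2((t_0,t_1),H^1(\Omega))}$ produces a single inequality of the shape
\[
\|v\|_{L^2((t_0+\epsilon,t_1-\epsilon),H^1(B(\tilde{y},r)))}\le C\,r^{-\eta/\beta}\,\|v\|_{L^2((t_0,t_1),H^1(\Omega))}^{1-\beta}\,\|v\|_{L^2((t_0,t_1),H^1(\omega))}^{\beta},
\]
with $\beta=\vartheta^{m}$. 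A covering of $\tilde{\omega}$ by finitely many such balls, followed by Young's inequality to split the product, converts this into the additive form with prefactor $\epsilon^s$ on the global term and an exponential prefactor on the $\omega$-term.

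The delicate point, and the place where the exponential weight $e^{\gamma/\epsilon}$ arises, is tracking how the interpolation exponent degrades along the chain. Because $\beta=\vartheta^{m}$ and $m\le C/\sqrt{\epsilon}$, the exponent $\beta$ is of order $e^{-c/\sqrt{\epsilon}}$, hence exponentially small; the factor $r^{-\eta/\beta}$ and the balancing constants in Young's inequality then blow up like $e^{\gamma/\epsilon}$ after one accounts for $r\sim\sqrt{\epsilon}$ and the final interval-endpoint correction. Matching the claimed single-exponential rate $e^{\gamma/\epsilon}$ (rather than the double exponential of Proposition \ref{proposition3.1}) is exactly what the fixed radius $r\sim\sqrt{\epsilon}$ buys us: here $r$ does not shrink toward the boundary, so only finitely many balls with $\beta=\vartheta^m$ are needed and a single exponential suffices. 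Finally I would dispose of the two missing time-slices $(t_0,t_0+\epsilon)$ and $(t_1-\epsilon,t_1)$ by Hardy's inequality via Corollary \ref{corollaryH}, absorbing them into the $\epsilon^s\|v\|_{H^1((t_0,t_1),H^1(\Omega))}$ term, and then invoke the $\partial_t u$ argument from the first paragraph to upgrade from the $L^2_t$ to the $H^1_t$ norm, completing the proof.
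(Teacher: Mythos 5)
Your proposal follows the paper's proof of Proposition \ref{proposition3.2} essentially step for step: a chain of balls of radius comparable to $\sqrt{\epsilon}$ strung along a Lipschitz path whose length is controlled via Lemma \ref{Glemma}, iteration of the three-cylinder inequality \eqref{2.1}, conversion of the multiplicative bound to an additive one by Young's inequality, a covering of $\tilde{\omega}$, Hardy's inequality for the lost time slices, and a final application of the whole estimate to $\partial_t u$ to pass from $L^2_t$ to $H^1_t$. One genuine gap in your bookkeeping: \eqref{2.1} bounds the norm over $(t_0+\epsilon,t_1-\epsilon)$ by norms over the strictly larger interval $(t_0,t_1)$, so it cannot be iterated ``on the common time interval $(t_0+\epsilon,t_1-\epsilon)$''. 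Each link of the chain must shed a further $\epsilon$ at both ends, which is why the paper works with the nested intervals $I_k=(t_0+k\epsilon,t_1-k\epsilon)$. Since the chain has $m\sim \epsilon^{-1/2}$ links, the total loss is of order $m\epsilon\sim\sqrt{\epsilon}$, not $\epsilon$; Hardy's inequality then yields $\epsilon^{s/2}$ rather than $\epsilon^s$, and the paper recovers the stated exponent only through the final substitution $\epsilon\mapsto\epsilon^2$ (which simultaneously turns $e^{2\gamma/\sqrt{\epsilon}}$ into $e^{\gamma/\epsilon}$). Your sketch, which accounts only for two slices of width $\epsilon$, misses this.

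Second, your explanation of where the single exponential $e^{\gamma/\epsilon}$ comes from does not hold up. With $\beta=\vartheta^m$ and $m\sim\epsilon^{-1/2}$ one has $\beta^{-1}\sim e^{c/\sqrt{\epsilon}}$, so the balancing factor $r^{-\eta/\beta}=\exp\bigl(\eta\beta^{-1}\ln(1/r)\bigr)$ is doubly exponential in $\epsilon^{-1/2}$, which dominates $e^{\gamma/\epsilon}$; and the claim that ``only finitely many balls are needed'' is not accurate, since $m\to\infty$ as $\epsilon\to 0$. Indeed the chain here has exactly the same structure as the one in Proposition \ref{proposition3.1} (fixed radius of order $\sqrt{\epsilon}$, of order $\epsilon^{-1/2}$ balls), where the paper correctly records the resulting prefactor as a double exponential. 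To be fair, the paper's own proof asserts the single-exponential bound $\delta^{-3/(1-\vartheta)-n}\sigma^{-1}\le e^{\gamma/\delta}$ at precisely the corresponding step with no further justification, so you have faithfully reproduced the argument including its weakest link; but the heuristic you offer in its support is not a proof, and this is the one step whose stated rate you should either verify honestly or weaken to a double exponential.
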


\begin{proof}
Pick $\Omega_0$ a Lipschitz domain so that $\Omega_0\Subset \Omega$, $\omega\Subset \Omega_0$ and $\tilde{\omega}\Subset \Omega_0$. 
Set then $d_0=\mbox{dist}(\overline{\Omega_0},\Gamma )$. Fix $0<\epsilon <\epsilon_0:=\min \left(d_0^2/9,1\right)$ and let $0<\delta <\sqrt{\epsilon}$.
Let $x_0\in \omega$, $x\in \overline{\tilde{\omega}}$ and let $\psi :[0,1]\rightarrow \Omega_0$ be a Lipschitz path joining $x_0$ to $x$ so that $\ell (\psi)\le d_g^{\Omega_0}(x_0,x)+1$. For simplicity's sake, we use in this proof the notation
\[
\mathbf{d}=\|d_g^{\Omega_0}\|_{L^\infty (\Omega_0\times \Omega_0)}.
\]

Let $\tau_0=0$ and $\tau_{k+1}=\inf \{\tau\in [\tau_k,1];\; \psi (\tau)\not\in B(\psi (\tau_k),\delta )\}$, $k\geq 0$. We claim that there exists an integer $N\geq 1$ so that $\psi (1)\in B\left(\psi(\tau_N),\delta/2 \right)$. If not, we would have $\psi (1)\not\in B\left(\psi (\tau_k),\delta/2 \right)$, for any $k\ge 0$. As the sequence $(\tau_k)$ is non decreasing and bounded from above by $1$, it converges to $\hat{\tau}\le 1$. In particular, there exists an integer $k_0\geq 1$ so that $\psi (t_k)\in B\left(\psi (\hat{\tau}),\delta/2\right)$, $k\ge k_0$. But this contradicts the fact that $\left|\psi (\tau_{k+1})-\psi (\tau_k)\right| =\delta$, for any $k\ge 0$.

Let us check that $N\le N_0$, where $N_0$ depends only on $\mathbf{d}$ and $\delta$. Pick $1\le j\le n$ so that 
\[
\max_{1\leq i\leq n} \left|\psi _i(\tau_{k+1})-\psi _i(\tau_k)\right| =\left|\psi _j(\tau_{k+1})-\psi _j(\tau_k)\right|.
\]
Then
\[
\delta \le n\left|\psi _j (\tau_{k+1})-\psi _j(\tau_k)\right|=n\left| \int_{\tau_k}^{\tau_{k+1}}\dot{\psi}_j(t)dt\right|\le  n\int_{\tau_k}^{\tau_{k+1}}\left|\dot{\psi}(t)\right|dt  .
\]
In consequence, where $\tau_{N+1}=1$, 
\[
(N+1)\delta \le n\sum_{k=0}^N\int_{\tau_k}^{\tau_{k+1}}\left|\dot{\psi}(\tau)\right|d\tau =n\ell (\psi)\le n(\mathbf{d}+1).
\]
Therefore
\[
N\le N_0=\left[ \frac{n(\mathbf{d}+1)}{\delta}\right].
\]
Here $[n(\mathbf{d}+1)/\delta ]$ is the integer part of $n(\mathbf{d}+1)/\delta$.

Let $x_k=\psi (t_k)$, $0\le k\le N$.  If $|z-x_{k+1}|<\delta$ then \[ |z-x_k|\le |z-x_{k+1}|+|x_{k+1}-x_k|<2\delta.\]  In other words, $B(x_{k+1},\delta )\subset B(x_k,2\delta)$.

We remark that we have also $B(x_k,3\delta )\subset \Omega$, for each $k$.

Define
\[
I_k=(t_0+k\epsilon , t_1- k\epsilon )\quad  k\ge 0.
\]
If $t_0^k=t_0+k\epsilon$ and  $t_1^k=t_1- k\epsilon$  then $I_k=(t_0^k,t_1^k)$ and $I_{k+1}=(t_0^k+\epsilon ,t_1^k-\epsilon)$.

Let $u\in H^2((t_0,t_1),H^1(\Omega )$ satisfying $Lu=0$ in $Q$. In this proof we use the following temporary notation
\[
M=\|u\|_{H^1((t_0,t_1),H^1(\Omega ))} .
\]

Taking into account that $\delta <\sqrt{\epsilon}$, we have from the three-cylinder inequality \eqref{2.1}
\[
\|u\|_{L^2(I_{k+1},H^1(B(x_{k+1},\delta )))}\le C_0 \delta^{-3}M^{1-\vartheta}\|u\|_{L^2(I_k,H^1(B(x_k,\delta )))}^\vartheta ,
\]
the constants $C_0$ and $\vartheta$, $0<\vartheta <1$, only depend on $\Omega$, $\kappa$ and $T_0$.

Set $\Lambda _k=\|u\|_{L^2(I_k,H^1(B(x_k,\delta )))}$, $0\le k\le N$, and $\Lambda _{N+1}=\|u\|_{L^2\left(I_{N+1},H^1\left(B\left(x,\delta/2 \right)\right)\right)}$. We can then rewrite this inequality in the form
\begin{equation}\label{4.1}
\Lambda_{k+1}\le C_0\delta^{-3}M^{1-\vartheta}\Lambda_k^\vartheta .
\end{equation}

Let $\beta =\vartheta^{N+1}$. We get in a straightforward manner from \eqref{4.1}
\[
\Lambda_{N+1}\leq (C_0 \delta^{-3})^{\frac{1-\vartheta^{N+2}}{1-\vartheta}}M^{1-\beta}\Lambda_0^\beta .
\]
Substituting if necessary $C_0$ by $\max \{C_0,1\}$, we may assume that $C_0 \ge 1$. Then the last inequality gives
\[
\Lambda_{N+1}\leq C \delta^{\frac{-3}{1-\vartheta}}M^{1-\beta}\Lambda_0^\beta .
\]
From here and until the end of the proof, $C$ is a generic constant, depending only on $\Omega$, $\kappa$, $\omega$, $\tilde{\omega}$, $s$ and $T_0$.

Young's inequality then yields, for $\sigma >0$,
\begin{align*}
\Lambda_{N+1} &\le C\delta^{\frac{-3}{1-\vartheta}}((1-\beta )\sigma ^{\frac{\beta}{1-\beta}}M+\beta\sigma^{-1}\Lambda_0)
\\
&\le  C\delta^{\frac{-3}{1-\vartheta}}(\sigma ^{\frac{\beta}{1-\beta}}M+\sigma^{-1}\Lambda_0).
\end{align*}

If $\delta $ is sufficiently small $B(x_0,\delta )\subset \omega$. On the other hand, $\overline{\tilde{\omega}}$ can be recovered by $O(\delta ^{-n})$ balls of radius $\frac{\delta}{2}$. Whence, bearing in mind that 
\[
I_{N+1}\supset J=(t_0+(N_0+1)\epsilon ,t_1-(N_0+1)\epsilon ),
\]
we have
\begin{equation}\label{equat1}
\|u\|_{L^2(J,H^1(\tilde{\omega}))}\le \delta^{-\frac{3}{1-\vartheta}-n}\left(\sigma ^{\frac{\beta}{1-\beta}}M+\sigma^{-1}\|u\|_{L^2((t_0,t_1),H^1(\omega))}\right).
\end{equation}
 Then we take $\sigma$ in \eqref{equat1} in order to satisfy
\[
\delta^{-\frac{3}{1-\vartheta}-n}\sigma ^{\frac{\beta}{1-\beta}}=\delta .
\]
In that case
\[
\sigma ^{-1}\le \delta^{-\frac{m}{\beta}},
\]
with $m=(4-\vartheta)/(1-\vartheta)+n$.

But
\[
\beta = \vartheta^{N+1}=e^{-(N+1)|\ln \vartheta |}\ge \vartheta e^{-N_0|\ln \vartheta |}\ge \vartheta e^{-\frac{n|\ln \vartheta |(\mathbf{d}+1)}{\delta }}.
\]
Hence
\[
\delta^{-\frac{3}{1-\vartheta}-n}\sigma ^{-1}\le e^{\gamma /\delta}.
\]
This inequality in \eqref{equat1} yields
\begin{equation}\label{equat1.1.0}
C\|u\|_{L^2(J,H^1(\tilde{\omega}))}\le \delta M+e^{\gamma/\delta}\|u\|_{L^2((t_0,t_1),H^1(\omega))}.
\end{equation}
Let $\aleph= 2n(\mathbf{d}+1)+\sqrt{\epsilon_0}$. Substituting $\epsilon_0$ by $\min \{\epsilon _0,(t_1-t_0)/(3\aleph)\}$, we may assume that $J_0=(t_0-\aleph \epsilon ,t_1+\aleph \epsilon)\ne \emptyset$. Taking $\delta =\sqrt{\epsilon}/2$, we get in straightforward manner that  $J\supset J_0$. Hence \eqref{equat1.1.0} yields
\begin{equation}\label{equat1.1}
C\|u\|_{L^2(J_0,H^1(\tilde{\omega}))}\le \sqrt{\epsilon} M+e^{2\gamma/\sqrt{\epsilon}}\|u\|_{L^2((t_0,t_1),H^1(\omega))}.
\end{equation}
We get by applying again Hardy's inequality in Lemma \ref{lemmaH}, for some fixed $s\in ( 0,1/2)$, 
\begin{equation}\label{equat2}
\|u\|_{L^2((t_0 ,t_0+\aleph\epsilon ),H^1(\tilde{\omega}))}\le c\aleph^s\epsilon ^sM,\quad \|u\|_{L^2((t_1-\aleph\epsilon ,t_1 ),H^1(\tilde{\omega}))}\le c\aleph^s\epsilon ^sM.
\end{equation}

Then \eqref{equat1.1} and \eqref{equat2} give
\[
C\|u\|_{L^2((t_0,t_1),H^1(\tilde{\omega}))}\le \epsilon ^{s/2}M+e^{2\gamma/\sqrt{\epsilon}}\|u\|_{L^2((t_0,t_1),H^1(\omega))}.
\]

Substituting  $\epsilon$ by $\epsilon^2$ and $2\gamma$ by $\gamma$, we obtain
\begin{equation}\label{+1}
C\|u\|_{L^2((t_0,t_1),H^1(\tilde{\omega}))}\le \epsilon ^s\|u\|_{H^1((t_0,t_1),H^1(\Omega ))}+e^{\gamma /\epsilon}\|u\|_{L^2((t_0,t_1),H^1(\omega))}.
\end{equation}
As $\partial_tu\in H^1((t_0,t_1),H^1(\Omega )$ satisfies $L\partial_tu=0$ in $Q$, \eqref{+1} is applicable with $u$ substituted by $\partial_tu$. That is we have
\begin{equation}\label{+2}
C\|\partial_tu\|_{L^2((t_0,t_1),H^1(\tilde{\omega}))}\le \epsilon ^s\|\partial_tu\|_{H^1((t_0,t_1),H^1(\Omega ))}+e^{\gamma /\epsilon}\|\partial_tu\|_{L^2((t_0,t_1),H^1(\omega))}.
\end{equation}
Putting together \eqref{+1} and \eqref{+2} to obtain the expected inequality.
\end{proof}

We are now ready to prove the result  quantifying the uniqueness of continuation from an interior data. Prior to do that, we need to introduce a definition. Set $\varrho_\ast =e^{-e}$ and, for $\mu >0$ and $\varrho_0\le \varrho_\ast$,
\[
\Phi_{\varrho_0,\mu}(\varrho )=
\left\{
\begin{array}{ll}
0 &\mbox{if}\; \varrho =0,
\\
(\ln \ln  |\ln \varrho |)^{-\mu} \quad &\mbox{if}\; 0<\varrho\le \varrho_0 ,
\\
\varrho &\mbox{if}\; \varrho \ge \varrho_0.
\end{array}
\right.
\]

\begin{theorem}\label{theorem3.1}
Let $\omega \Subset \Omega$ and $s\in (0,1/2)$. There exist two constants $C>0$ and $0<\varrho_0\le \varrho_\ast$, only depending on $\Omega$, $\kappa$, $\omega$, $\alpha$, $s$ and $T_0$, so that, for any $0\ne u\in \mathscr{Y}(Q)$ satisfying $Lu=0$ in $Q$, we have
\[
C\|u\|_{L^2((t_0 ,t_1),H^1(\Omega ))}\le \|u\|_{\mathscr{Y}(Q)}\Phi_{\varrho_0,\mu}\left(\frac{\mathcal{I}(u,\omega )}{\|u\|_{\mathscr{Y}(Q)}}  \right).
\]
Here $\mu =\min\{s,\alpha\}/4$ and $\mathcal{I}(u,\omega )=\|u\|_{H^1((t_0,t_1),H^1(\omega ))}$.
\end{theorem}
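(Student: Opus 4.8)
The plan is to read Theorem~\ref{theorem3.1} as merely an optimization of the interpolation estimate \eqref{3.3} of Corollary~\ref{corollary3.1} over its free parameter $\sigma$. Abbreviate $M=\|u\|_{\mathscr{Y}(Q)}$, $\eta=\mathcal{I}(u,\omega)=\|u\|_{H^1((t_0,t_1),H^1(\omega))}$, $L=\|u\|_{L^2((t_0,t_1),H^1(\Omega))}$ and $\varrho=\eta/M$, where we may assume $M>0$, the case $u\equiv 0$ being trivial. Then \eqref{3.3} reads
\[
CL\le \sigma^{\min(\alpha,s)/2}M+e^{e^{c/\sqrt{\sigma}}}\eta,\qquad 0<\sigma<\sigma_0 .
\]
The whole game is to choose $\sigma=\sigma(\varrho)$ so that the polynomial term reproduces $M\,\Phi_{\varrho_0,\mu}(\varrho)$ while the double-exponential term stays negligible.

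First I would dispose of the two degenerate regimes. If $\varrho=0$ then $\eta=0$, and letting $\sigma\to 0^+$ above forces $L=0$, so both sides of the asserted estimate vanish. If $\varrho\ge \varrho_0$, then $\Phi_{\varrho_0,\mu}(\varrho)=\varrho$; since the $L^2_tH^1_x$-norm is dominated by the $\mathscr{Y}(Q)$-norm we have $L\le M$, and together with $\eta=\varrho M\ge \varrho_0 M$ this yields $\varrho_0 L\le \varrho_0 M\le \eta=M\,\Phi_{\varrho_0,\mu}(\varrho)$, which is the claim after relabelling $C$.

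The substantial case is $0<\varrho<\varrho_0$. Writing $a=\min(\alpha,s)/2$, so that $\mu=a/2$, I would choose
\[
\sigma=(\ln\ln|\ln\varrho|)^{-1/2}.
\]
Shrinking $\varrho_0\le \varrho_\ast$ if necessary ensures simultaneously that $\ln\ln|\ln\varrho|>0$, so that $\sigma$ and $\Phi_{\varrho_0,\mu}(\varrho)$ are well defined and positive, and that $\sigma<\sigma_0$. With this choice the polynomial term is exactly $\sigma^aM=(\ln\ln|\ln\varrho|)^{-\mu}M=M\,\Phi_{\varrho_0,\mu}(\varrho)$, which is precisely the right-hand side of the theorem.

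The only delicate point, and the step I expect to be the main obstacle, is to verify that the double-exponential term does not exceed this. Setting $X=|\ln\varrho|$ one has $1/\sqrt{\sigma}=(\ln\ln X)^{1/4}$, so that term equals $e^{e^{c(\ln\ln X)^{1/4}}}\varrho M$. The decisive asymptotic fact is that $c(\ln\ln X)^{1/4}=o(\ln X)$ as $X\to\infty$, whatever the fixed value of $c$; hence $e^{c(\ln\ln X)^{1/4}}\le \tfrac12 X$ for $X$ large, which gives $e^{e^{c/\sqrt{\sigma}}}\le e^{X/2}=\varrho^{-1/2}$ and therefore $e^{e^{c/\sqrt{\sigma}}}\eta\le \varrho^{1/2}M$. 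Since $\varrho^{1/2}$ decays far faster than $(\ln\ln|\ln\varrho|)^{-\mu}$, a final shrinking of $\varrho_0$ gives $\varrho^{1/2}\le (\ln\ln|\ln\varrho|)^{-\mu}$ on $0<\varrho<\varrho_0$. Adding the two bounds yields $CL\le 2M\,\Phi_{\varrho_0,\mu}(\varrho)$, and absorbing the factor $2$ into $C$ completes the proof; all thresholds used to shrink $\varrho_0$ depend only on $c$, $\sigma_0$ and $\mu$, hence only on the quantities allowed in the statement.
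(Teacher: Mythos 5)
There is a genuine gap at the very first step: you treat the estimate \eqref{3.3} of Corollary \ref{corollary3.1} as if it held for the subdomain $\omega$ given in the statement of Theorem \ref{theorem3.1}, but it does not. In Corollary \ref{corollary3.1} (as in Proposition \ref{proposition3.1}) the subdomain is \emph{existentially} quantified: the corollary produces one specific $\omega\Subset\Omega$, namely the collar $\bigcup_{\tilde y\in\Gamma}B(y_0(\tilde y),R\sin\theta/6)$ built from the interior cone property, and the estimate is only asserted for that set. Theorem \ref{theorem3.1}, by contrast, is stated for an \emph{arbitrary} $\omega\Subset\Omega$, and the inequality is not monotone in the favourable direction (shrinking or relocating $\omega$ makes the right-hand side smaller, not larger), so you cannot simply substitute your $\omega$ into \eqref{3.3}. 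The missing ingredient is Proposition \ref{proposition3.2}, the interior-to-interior propagation of smallness, which bounds $\|u\|_{H^1((t_0,t_1),H^1(\tilde\omega))}$ (with $\tilde\omega$ the collar from the corollary) by $\epsilon^s\|u\|_{\mathscr{Y}(Q)}+e^{\gamma/\epsilon}\|u\|_{H^1((t_0,t_1),H^1(\omega))}$ for the given $\omega$. Combining the two and eliminating $\epsilon$ (choosing $\epsilon^s=\sigma^\mu e^{-e^{c/\sqrt\sigma}}$) is exactly what produces the paper's inequality \eqref{E2}, whose coefficient on the interior term is the \emph{triple} exponential $e^{e^{e^{c/\sqrt\sigma}}}$ rather than your double exponential. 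That triple exponential is the reason the modulus $\Phi_{\varrho_0,\mu}$ involves $\ln\ln|\ln\varrho|$ and the exponent drops to $\mu=\min(\alpha,s)/4$; with only a double exponential one would naturally land on a double-logarithmic modulus.

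Your final optimization step is essentially sound and parallels the paper's: the degenerate cases $\varrho=0$ and $\varrho\ge\varrho_0$ are handled correctly, and the asymptotic comparison showing that the exponential term is dominated by the polynomial term after the choice of $\sigma$ is the right kind of computation (the paper instead inverts $\ell(\sigma)=\sigma^\mu e^{-e^{e^{c/\sqrt\sigma}}}$ at $\varrho$, which amounts to the same thing). To repair the proof, keep that part but replace your starting inequality by \eqref{E2}, i.e.\ first run Corollary \ref{corollary3.1} to reach the collar $\tilde\omega$ and then Proposition \ref{proposition3.2} to reach the prescribed $\omega$, and redo the asymptotics with the extra exponential; the choice $\sigma=(\ln\ln|\ln\varrho|)^{-1/2}$ then yields precisely the stated exponent $\mu=\min(\alpha,s)/4$.
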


\begin{proof}
 From Corollary \ref{corollary3.1},
there exist $\tilde{\omega} \Subset \Omega$ and three constants $c>0$, $C>0$ and $\sigma_0>0$ so that, for any $u\in \mathscr{Y}(Q)$ satisfying $Lu=0$ in $Q$ and $0<\sigma <\sigma_0$, we have
\[
C\|u\|_{L^2((t_0 ,t_1),H^1(\Omega ))}\le \sigma ^\mu \|u\|_{\mathscr{Y}(Q)} +e^{e^{c/\sqrt{\sigma}}}\|u\|_{H^1((t_0,t_1),H^1(\tilde{\omega} ))}.
\]
Here $\mu=\min \{s,\alpha \}/2$.

But according to Proposition \ref{proposition3.2}, there exist three constants $\gamma>0$, $C>0$ and $\epsilon_0 >0$ so that, for any $u\in \mathscr{Y}(Q)$ satisfying $Lu=0$ in $Q$ and $0<\epsilon <\epsilon_0$, we have
\[
C\|u\|_{H^1((t_0,t_1),H^1(\tilde{\omega}))}\le \epsilon ^s\|u\|_{\mathscr{Y}(Q)}+e^{\gamma/\epsilon}\|u\|_{H^1((t_0,t_1),H^1(\omega))}.
\]

The last two inequalities yield
\begin{align}
C\|u\|_{L^2((t_0 ,t_1),H^1(\Omega ))}\le  (\sigma ^\mu &+\epsilon^s e^{e^{c/\sqrt{\sigma}}}) \|u\|_{\mathscr{Y}(Q)} \label{E1}
\\
&+e^{\gamma/\epsilon}e^{e^{c/\sqrt{\sigma}}}\|u\|_{H^1((0,T),H^1(\omega ))},\nonumber
\end{align}
for any $u\in \mathscr{Y}(Q)$ satisfying $Lu=0$ in $Q$, $0<\sigma <\sigma_0$ and $0<\epsilon <\epsilon_0$.

We assume, by reducing $\sigma _0$ if necessary, that $\sigma_0^\mu e^{-e^{c/\sqrt{\sigma_0}}}< \epsilon _0^s$. We get, by taking $\epsilon$ so that $\epsilon^s=\sigma^\mu e^{-e^{c/\sqrt{\sigma}}}$,
\begin{equation}\label{E2}
C\|u\|_{L^2((t_0 ,t_1),H^1(\Omega ))}\le  \sigma ^\mu  \|u\|_{\mathscr{Y}(Q)} +e^{e^{e^{c/\sqrt{\sigma}}}}\|u\|_{H^1((t_0 ,t_1),H^1(\omega ))},
\end{equation}
for any $u\in \mathscr{Y}(Q)$ satisfying $Lu=0$ in $Q$, and  $0<\sigma <\sigma_0$. 

Fix $u\in \mathscr{Y}(Q)$, non identically equal to zero, satisfying $Lu=0$ in $Q$. Let
\[
M=\frac{\|u\|_{L^2((t_0 ,t_1),H^1(\Omega ))}}{\|u\|_{\mathscr{Y}(Q)}}\quad \mbox{and}\quad N=\frac{\|u\|_{H^1((t_0 ,t_1),H^1(\omega ))}}{\|u\|_{\mathscr{Y}(Q)}}.
\]
Then \eqref{E2} can be rewritten as
\begin{equation}\label{E2.0}
CM\le  \sigma ^\mu   +e^{e^{e^{c/\sqrt{\sigma}}}}N,\quad 0<\sigma <\sigma_0.
\end{equation}
Define the function $\ell$ by $\ell (\sigma)=\sigma ^\mu e^{-e^{e^{c/\sqrt{\sigma}}}}$. 
If $N<\min \{\ell (\sigma_0) , \varrho_\ast\}=\varrho_0$ then there exists $\overline{\sigma}$ so that $\ell (\overline{\sigma})=N$. Changing $c$ if necessary, we have
\[
\frac{1}{N}\le e^{e^{e^{c/\sqrt{\overline{\sigma}}}}}.
\]
Or equivalently
\[
\overline{\sigma}\le [\ln \ln |\ln N|]^{-1/2}
\]
It follows readily by taking $\sigma=\overline{\sigma}$ in \eqref{E2.0} that
\begin{equation}\label{E2.1}
CM\le [\ln \ln |\ln N|]^{-\mu/2}.
\end{equation}
If $N\ge \varrho_0$ then obviously we have
\begin{equation}\label{E2.2}
M\le 1\le \frac{N}{\varrho_0}.
\end{equation}
The expected inequality follows then from \eqref{E2.1} and \eqref{E2.2}.
\end{proof}

\section{Stability of parabolic Cauchy problems}\label{section4}

An additional step is necessary to prove our stability estimate for the Cauchy problem. It consists in quantifying the uniqueness of continuation from the Cauchy data to an interior subdomain. 

\begin{proposition}\label{propositionS4}
 Let $\nu\in (0,1/2)$. There exist $\omega \Subset \Omega$, only depending on $\Omega$ and $\Gamma_0$, and two constants $C>0$ and $c >0$, only depending on $\Omega$, $\kappa$, $\Gamma _0$, $\nu$ and $T_0$, so that, for any $u\in H^2((t_0,t_1),H^2(\Omega ))$ satisfying $Lu=0$ in $Q$ and $0<\epsilon <(t_1-t_0)/2$, we have
 \begin{align}
C&\|u\|_{H^1 ((t_0 ,t_1),L^2(\omega))}\le \epsilon^\nu \|u\|_{H^2((t_0,t_1),H^1(\Omega ))}\label{S4.1}
\\
&\hskip 1.5 cm +e^{c/\epsilon^2}\left(\|u\|_{H^2((t_0,t_1),L^2(\Gamma _0))}+\|\nabla u\|_{H^1((t_0,t_1), L^2(\Gamma _0))}\right).\nonumber
\end{align}
\end{proposition}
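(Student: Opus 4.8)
The plan is to run a localized Carleman argument across $\Gamma_0$, convert it into a Hölder-type interpolation inequality by optimizing the large parameter, chain it with the three-cylinder inequality to reach a prescribed interior region, and finally remove the missing initial and final time data by means of the Hardy inequality of Lemma \ref{lemmaH}.

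First I would fix the geometry near $\Gamma_0$. Using that $\Gamma_0$ is open and $\Omega$ is Lipschitz, I would enlarge $\Omega$ across a piece of $\Gamma_0$ into a domain $\Omega^\ast$, extend the matrix $A$ to $\Omega^\ast$ keeping \eqref{0.1}--\eqref{0.2}, and choose a weight $\psi\in C^2(\overline{\Omega^\ast})$ without critical points whose level sets separate a small interior collar $\omega$ (to be defined just inside $\Gamma_0$, depending only on $\Omega$ and $\Gamma_0$) from the part of $\partial\Omega^\ast$ lying outside $\Omega$. Extending $u$ by zero produces $\tilde u$ that solves $Lu=0$ on each side of $\Gamma_0$ but carries across $\Gamma_0$ a source measured exactly by the Cauchy pair $(u,\nabla u)_{|\Gamma_0}$; applying Theorem \ref{theorem1.1+} to a spatial cutoff $\chi\tilde u$ on $\Omega^\ast\times(t_0+\epsilon,t_1-\epsilon)$ then yields the weighted estimate in which the only boundary contribution comes from $\Gamma_0$ and the volume right-hand side is the commutator $[L,\chi]\tilde u$, supported away from $\omega$.

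Next, exactly as in the proof of Theorem \ref{theorem2.1}, I would split the weighted integrals according to the size of $\varphi$ on three regions: the target collar $\omega$ (where $\varphi$ is smallest, so $e^{-2\tau\varphi}$ is largest and the left-hand side dominates), the support of $\nabla\chi$ (where $\varphi$ is larger, yielding a factor $e^{-b\tau}$ times a global norm), and $\Gamma_0$ (yielding a factor $e^{a\tau}$ times the Cauchy data). On the truncated interval the weight $g(t)=1/((t-t_0)(t_1-t))$ is bounded by $C/\epsilon$, hence $\varphi$ and $\chi$ are of size $C/\epsilon$; optimizing over $\tau\ge\tau_0$ then turns the Carleman inequality into an estimate of the announced shape, with the continuation cost $e^{c/\epsilon^2}$ in front of the $\Gamma_0$-data and an $\epsilon^\nu$-error in front of the global $H^2$-in-time norm. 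If needed, I would compose this near-boundary estimate with a finite three-cylinder chain (Theorem \ref{theorem2.1}, whose radii shrink near $t_0$ and $t_1$) to move the conclusion from the collar to the prescribed $\omega$, absorbing the finitely many Young inequalities into the constants.

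To produce the two time derivatives required by $\mathcal{C}(u,\Gamma_0)$ — namely $\|u\|_{H^2((t_0,t_1),L^2(\Gamma_0))}$ and $\|\nabla u\|_{H^1((t_0,t_1),L^2(\Gamma_0))}$ — and the factor $\|u\|_{H^1((t_0,t_1),L^2(\omega))}$ on the left, I would run the whole argument simultaneously for $u$ and for $\partial_t u$, which again solves $L\partial_t u=0$; the boundary term of Theorem \ref{theorem1.1+} already carries $u^2$, $|\nabla u|^2$ and $(\partial_t u)^2$, so summing the two applications gives precisely the stated data norm. Finally, since the estimate only holds on $(t_0+\epsilon,t_1-\epsilon)$, I would control the two end slabs $(t_0,t_0+\epsilon)$ and $(t_1-\epsilon,t_1)$ by $\epsilon^\nu$ times an $H^1$-in-time norm through Corollary \ref{corollaryH}, which is exactly where the $H^2((t_0,t_1),H^1(\Omega))$ regularity is consumed. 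The main obstacle is the boundary step: constructing a critical-point-free weight whose level sets correctly isolate $\Gamma_0$ from the interior collar while controlling the cutoff source, and then tracking the $\epsilon$-dependence of $g$ through the $\tau$-optimization so that the amplification is genuinely $e^{c/\epsilon^2}$ and not something worse.
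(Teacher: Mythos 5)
Your overall architecture (a Carleman estimate localized near $\Gamma_0$, a three-region splitting of the weight, optimization in $\tau$ to get a H\"older interpolation, Hardy's inequality for the two end slabs, and running everything for $u$ and $\partial_t u$) matches the paper's strategy, but two of your key steps do not go through as described. First, the extension by zero across $\Gamma_0$ is incompatible with Theorem \ref{theorem1.1+} as stated: $\tilde u$ is not in $H^1((t_0,t_1),H^2(\Omega^\ast))$ (it is not even $H^1$ in space across the interface), and once $\Gamma_0$ sits in the \emph{interior} of $\Omega^\ast$ the Cauchy data can no longer appear through the boundary integral of the Carleman inequality — it would have to enter as a surface distribution in $L\tilde u$, which the stated estimate does not accommodate. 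The paper avoids this entirely: it picks $x_0$ \emph{outside} $\overline{\Omega}$ close to $\tilde x\in\Gamma_0$, takes $\psi=(\rho+r)^2-|x-x_0|^2$ (critical-point free since $x_0\notin\overline{\Omega}$), and applies the Carleman estimate on $B(x_0,\rho+r)\cap\Omega$, whose boundary consists of a piece of $\Gamma_0$ (carrying the data) and a spherical cap killed by the cutoff. No extension of $u$ or $A$ is needed.

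Second, and more seriously, your single application of the Carleman estimate on the whole truncated cylinder $\Omega\times(t_0+\epsilon,t_1-\epsilon)$ with the observation that $g\le C/\epsilon$ there does not yield the three-region splitting. The weight is $\varphi(x,t)=g(t)F(x)$, and over $(t_0+\epsilon,t_1-\epsilon)$ the factor $g$ varies between $O(1)$ and $O(1/\epsilon)$; consequently the required separation $\sup_{Q_1}\varphi<\inf_{Q_2}\varphi$ would force the fixed spatial ratio $\inf_{Q_2}F/\sup_{Q_1}F$ to exceed $C/\epsilon$, which is impossible for small $\epsilon$. (It would also produce a cost $e^{c/\epsilon}$ rather than $e^{c/\epsilon^2}$.) The paper's resolution — the step genuinely missing from your plan — is to slice $[t_0+\epsilon,t_1-\epsilon]$ into $O(1/\epsilon)$ overlapping time slabs $(s_k-\eta,s_k+\eta)$ with $\eta=\epsilon/2$, apply the Carleman estimate on each slab with its \emph{own} singular weight $g(t)=1/[(t-s_k+\eta)(s_k+\eta-t)]$ (so that $g\asymp\eta^{-2}\asymp\epsilon^{-2}$ uniformly on the inner subinterval, whence the $e^{c/\epsilon^2}$ after optimizing $\tau$), and then sum over $k$; the overlap only costs a factor $2$. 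Your final three-cylinder chain is not needed here ($\omega$ is simply taken inside the collar $B(x_0,\rho+\theta r)\cap\Omega$; propagation to other subdomains is Proposition \ref{proposition3.2}), and in any case it would not repair the Carleman step. The Hardy-inequality treatment of the end slabs and the simultaneous argument for $\partial_t u$ are exactly as in the paper.
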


\begin{proof}
Pick $0<\epsilon <(t_1-t_0)/2$, $0<\eta <\epsilon$ and let $s\in [t_0+\epsilon , t_1-\epsilon]$. Let $\tilde{x}\in \Gamma_0$ be arbitrarily fixed and let $R>0$ so that $B(\tilde{x},R)\cap \Gamma \subset \Gamma_0$. Take $x_0$ in the interior of $\mathbb{R}^n\setminus\overline{\Omega}$ sufficiently close to $\tilde{x}$ is such a way that $\rho=\mbox{dist}(x_0,K)<R$, where $K=\overline{B(\tilde{x},R)}\cap \Gamma_0$ (think to the fact that $\Omega$ is on one side of its boundary). Fix then $r>0$ in order to satisfy $B(x_0,\rho +r)\cap \Gamma \subset \Gamma_0$ and $B(x_0,\rho+\theta r)\cap \Omega\ne \emptyset$, for some $0<\theta <1$.

Let $\phi \in C_0^\infty (B(\tilde{x},\rho +r))$ satisfying $\phi=1$ on $B(\tilde{x},\rho +(\theta+1)r/2)$. Set, where $0<\delta <1$ is a constant to be specified in the sequel,
\begin{align*}
&Q_0=[B(x_0,\rho +r)\cap \Omega ]\times (s-\eta , s+\eta ),
\\
&Q_1=[B(x_0,\rho +\theta r)\cap \Omega]\times (s-\delta \eta , s+\delta \eta ),
\\
&Q_2=\{ [B(x_0,\rho+r)\setminus B(x_0,\rho+(\theta +1)r/2)]\cap \Omega \}\times (s-\eta ,s+\eta ),
\\
&\Sigma _0=[B(x_0,\rho+r)\cap \Gamma ]\times (s-\eta ,s+\eta ).
\end{align*}

We apply Theorem \ref{theorem1.1+}, with $Q$ substituted by $Q_0$, $\psi =(\rho +r)^2-|x-x_0|^2$, $g(t)=1/[(t-s+\eta)(s+\eta -t)]$  and $\lambda$ fixed, to $\phi u$ so that $u\in H^1((t_0,t_1),H^2(\Omega ))$ satisfies $Lu=0$ in $Q$ in order to obtain
\begin{align}
C\int_{Q_1}u^2e^{-2\tau \varphi}dxdt &\le \int_{Q_0}(L(\phi u))^2e^{-2\tau \varphi}dxdt \label{5.1}
\\
&\quad +\int_{\Sigma _0}(u^2+|\nabla u|^2+(\partial _tu)^2)e^{-2\tau \varphi}dxdt. \nonumber
\end{align}
Here and henceforth, $C$ is a generic constant only depending on $\Omega$, $\kappa$, $\nu$, $\Gamma_0$ and $T_0$.

But
\[
L(\phi u)=L\phi u+2A\nabla \phi \cdot \nabla u.
\]
Whence $\mbox{supp}(L(\phi u))\cap Q_0\subset Q_2$ together with \eqref{5.1} yield
\begin{align}
C\int_{Q_1}u^2e^{-2\tau \varphi}dxdt&\le \int_{Q_2}(u^2+|\nabla u|^2)e^{-2\tau \varphi}dxdt \label{5.2}
\\
&\quad +\int_{\Sigma _0}(u^2+|\nabla u|^2+(\partial _tu)^2)e^{-2\tau \varphi}dxdt.\nonumber
\end{align}

Let
\begin{align*}
&\alpha =\eta ^{-2}\left[e^{4\lambda (\rho +r)^2}-e^{\lambda (2(\rho +r) ^2-(\rho +\theta r)^2)}\right] :=\eta ^{-2}\tilde{\alpha},
\\
&\beta =\eta ^{-2}\left[e^{4\lambda (\rho +r)^2}-e^{2\lambda (\rho +r)^2}\right] :=\eta ^{-2}\tilde{\beta},
\\
&\gamma =\eta ^{-2}\left[e^{4\lambda (\rho +r)^2}-e^{\lambda (2(\rho +r) ^2-(\rho +(\theta +1)r/2)^2)}\right] :=\eta ^{-2}\tilde{\gamma}.
\end{align*}
Then it is straightforward to check that
\begin{align*}
&\varphi (x,t)\le \frac{\alpha}{1-\delta}\quad \mbox{in}\; Q_1,
\\
&\varphi (x,t)\ge \beta \quad \mbox{in}\; \Sigma _0,
\\
&\varphi (x,t)\ge \gamma \quad \mbox{in}\; Q_2 .
\end{align*}
Noting that 
\[
\frac{\beta }{\alpha}=\frac{\tilde{\beta}}{\tilde{\alpha}}<1<\frac{\gamma}{\alpha}=\frac{\tilde{\gamma}}{\tilde{\alpha}}, 
\]
we can choose $0<\varkappa <1$ so that
\[
\frac{1}{1-\delta}:=\varkappa \frac{\tilde{\beta}}{\tilde{\alpha}}+(1-\varkappa )\frac{\tilde{\gamma}}{\tilde{\alpha}}>1.
\]
With this choice of $\delta$, \eqref{5.2} yields
\begin{align}
C\int_{Q_1}u^2dxdt&\le e^{-4b\eta^{-2}\tau }\int_{Q_2}(u^2+|\nabla u|^2)dxdt \label{5.3}
\\
&\quad +e^{4a\eta^{-2}\tau }\int_{\Sigma _0}(u^2+|\nabla u|^2+(\partial _tu)^2)dxdt.\nonumber
\end{align}
Here $a=(1-\varkappa )(\tilde{\gamma} -\tilde{\beta} )/2$ and $b=\varkappa (\tilde{\gamma} -\tilde{\beta})/2$. 

Let $\eta =\epsilon /2$, $s_0=t_0+\epsilon -\delta\epsilon /2$, $s_1=s_0+\delta \epsilon/2\ldots s_k=s_0+k\delta \epsilon/2$.
Let $K=K(\epsilon )$ so that 
\[
\bigcup_{k=0}^K\left(s_k-\delta \epsilon/2,s_k+\delta \epsilon /2\right)\supset [t_0+\epsilon ,t_1-\epsilon].
\]
If $Q_j^k$ (resp. $\Sigma _0^k$) denotes $Q_j$ (resp. $\Sigma_0$), $j=1,2$, when $s$ is substituted by $s_k$, then it follows from \eqref{5.3}
\begin{align*}
C\sum_{k=0}^K\int_{Q_1^k}u^2dxdt&\le e^{a\epsilon^{-2}\tau }\sum_{k=0}^K\int_{Q_2^k}(u^2+|\nabla u|^2)dxdt
\\
&\quad +e^{-b\epsilon^{-2}\tau }\sum_{k=0}^K\int_{\Sigma _0^k}(u^2+|\nabla u|^2+(\partial _tu)^2)dxdt.
\end{align*}
Note that the intervals $\left(s_k-\frac{\epsilon}{2},s_k+\frac{\epsilon}{2}\right)$ overlap, but their union can cover at most two times a subdomain of $(t_0,t_1)$. Whence
\begin{equation}\label{EQ1}
CI\le e^{a\epsilon^{-2}\tau }N+e^{-b\epsilon^{-2}\tau }M,\quad \tau \ge \tau_0,
\end{equation}
where we used the  temporary notations
\begin{align*}
&I=\|u\|_{L^2([B(x_0,\rho+\theta r)\cap \Omega )]\times (t_0+\epsilon  ,t_1-\epsilon ))},
\\
&M=\|u\|_{L^2((t_0,t_1),H^1(\Omega ))},
\\
&N=\|u\|_{H^1((t_0,t_1),L^2(\Gamma _0))}+\|\nabla u\|_{L^2((t_0,t_1), L^2(\Gamma _0))}.
\end{align*}
In \eqref{EQ1}, we get by substituting $\tau$ by $\epsilon^2 \tau$
\begin{equation}\label{EQ2}
CI\le e^{a\tau }M+e^{-b\tau }N,\quad \tau \ge \tau_0/\epsilon ^2.
\end{equation}
Set
\[
\tau_1=\frac{\ln (N/M)}{a+b}.
\]

If $\tau _1\ge \tau_0/\epsilon ^2$ then $\tau=\tau_1$ in \eqref{EQ2} yields
\begin{equation}\label{EQ3}
CI\le M^\vartheta N^{1-\vartheta},
\end{equation}
with $\vartheta=\frac{b}{a+b}$.

When $\tau _1<\tau_0/\epsilon ^2$, we have
\[
M<e^{(a+b)/\epsilon^{2}\tau_0}N.
\]
This inequality entails
\begin{equation}\label{EQ4}
I\le M=M^{\vartheta}M^{1-\vartheta}\le M^{\vartheta}e^{(1-\vartheta)(a+b)\tau_0/\epsilon ^2}N^{1-\vartheta}.
\end{equation}
So, in any case, one of estimates \eqref{EQ3} and \eqref{EQ4} holds. In other words, we proved
\begin{align*}
e^{-c/\epsilon^2}&\|u\|_{L^2([B(x_0,\rho+\theta r)\cap \Omega )]\times (t_0+\epsilon  ,t_1-\epsilon ))}
\\
& \le \|u\|_{L^2((t_0,t_1),H^1(\Omega ))}^\vartheta\left(\|u\|_{H^1((t_0,t_1),L^2(\Gamma _0))}+\|\nabla u\|_{L^2((t_0,t_1), L^2(\Gamma _0))} \right)^{1-\vartheta}.
\end{align*}

Fix $\omega \Subset B(x_0,\rho+\theta r)\cap \Omega $. Then the last inequality implies
\begin{align*}
e^{-c/\epsilon^2}&\|u\|_{L^2(\omega\times (t_0+\epsilon  ,t_1-\epsilon ))}
\\
&\le \|u\|_{L^2((t_0,t_1),H^1(\Omega ))}^\vartheta\left(\|u\|_{H^1((t_0,t_1),L^2(\Gamma _0))}+\|\nabla u\|_{L^2((t_0,t_1), L^2(\Gamma _0))} \right)^{1-\vartheta}.
\end{align*}
Hence
\begin{align}
&C\|u\|_{L^2(\omega \times (t_0+\epsilon  ,t_1-\epsilon ))} \label{EQ5}
\\
&\quad \le \sigma ^{\gamma}e^{c/\epsilon^2}M+\sigma^{-1}e^{c/\epsilon^2}\left(\|u\|_{H^1((t_0,t_1),L^2(\Gamma _0))}+\|\nabla u\|_{L^2((t_0,t_1), L^2(\Gamma _0))}\right),\nonumber
\end{align}
for $\sigma >0$, where $\gamma =\frac{1-\vartheta}{\vartheta}$.

We get, once again from Hardy's inequality in Lemma \ref{lemmaH},
\[
\|u\|_{L^2(\omega \times (t_0  ,t_0+\epsilon ))}\le C\epsilon^{\nu}M_1(u),\quad \|u\|_{L^2(\omega \times (t_1-\epsilon  ,t_1 ))}\le C\epsilon^{\nu}M_1(u),
\]
where $M_1(u)=\|u\|_{H^1((t_0,t_1),H^1(\Omega ))}$.

Combined with \eqref{EQ5} this inequality yields
\begin{align*}
C&\|u\|_{L^2(\omega \times (t_0 ,t_1))}
\\
&\le \left(\sigma ^{\gamma}e^{c/\epsilon^2}+\epsilon^\nu\right)M_1+\sigma^{-1}e^{c/\epsilon^2}\left(\|u\|_{H^1((t_0,t_1),L^2(\Gamma _0))}+\|\nabla u\|_{L^2((t_0,t_1), L^2(\Gamma _0))}\right).
\end{align*}
In this inequality, we take $\sigma$ so that $\sigma ^\gamma=\epsilon^\nu e^{-c/\epsilon^2}$. Noting that $\sigma ^{-1}\le \epsilon^{-\nu/\gamma}$, we find
\begin{align}
C\|u\|_{L^2(\omega \times (t_0 ,t_1))}&\le \epsilon^\nu M_1(u)\label{-1}
\\
&+e^{c/\epsilon^2}\left(\|u\|_{H^1((t_0,t_1),L^2(\Gamma _0))}+\|\nabla u\|_{L^2((t_0,t_1), L^2(\Gamma _0))}\right).\nonumber
\end{align}
As we have seen in the preceding proof, inequality \eqref{-1} still holds when $u$ is substituted by $\partial_tu$. That is we have
\begin{align}
C\|\partial _tu\|_{L^2(\omega \times (t_0 ,t_1))}&\le \epsilon^\nu M_1(\partial_tu)\label{-2}
\\
&+e^{c/\epsilon^2}\left(\|u\|_{H^2((t_0,t_1),L^2(\Gamma _0))}+\|\nabla u\|_{H^1((t_0,t_1), L^2(\Gamma _0))}\right).\nonumber
\end{align}
We add side by side \eqref{-1} and \eqref{-2} in order to obtain the expected inequality.
\end{proof}

\begin{remark}
{\rm
In the preceding proof we assumed that $u\in H^2((t_0,t_1),H^2(\Omega ))$. But a quick inspection of the proof of this proposition shows that if fact the result can be extended to functions from $H^2((t_0,t_1),H^1(\Omega ))\cap H^1((t_0,t_1),H^2(\Omega ))$.
}
\end{remark}

Proposition \ref{propositionS4} gives only an estimate in $H^1((t_0 ,t_1),L^2(\omega ))$. But we can obtain from it an estimate in $H^1((t_0 ,t_1),H^1(\omega ))$ by using the following Caccioppoli type inequality for the parabolic equation $Lu=0$. 

\begin{lemma}\label{lemma5.1}
 Let $\omega _0\Subset \omega_1 \Subset \Omega$. There exists a constant $C>0$, only depending on $\Omega$, $\kappa$, $T_0$, $\omega_0$ and $\omega_1$, so that, for any $u\in H^2((t_0,t_1),H^2(\Omega ))$ satisfying $Lu=0$ in $Q$, we have
\begin{equation}\label{5.4+}
C\|u\|_{H^1((t_0 ,t_1),H^1(\omega _0))}\le \|u\|_{H^2((t_0,t_1),L^2(\omega _1))}.
\end{equation}
\end{lemma}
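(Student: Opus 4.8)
The plan is to reduce this parabolic Caccioppoli inequality to the classical elliptic one by freezing the time variable, which neatly sidesteps the boundary terms at $t=t_0$ and $t=t_1$ that a naive space–time energy estimate would produce. The starting observation is that, since $A=A(x)$ does not depend on $t$, the equation $Lu=0$ reads $\mbox{div}(A\nabla u)=\partial_tu$, so for a.e.\ $t\in(t_0,t_1)$ the function $u(\cdot,t)$ solves the elliptic equation $\mbox{div}(A\nabla u(\cdot,t))=\partial_tu(\cdot,t)$ in $\Omega$. Differentiating in $t$ and using once more that $A$ is independent of $t$, the function $\partial_tu$ satisfies $\mbox{div}(A\nabla \partial_tu(\cdot,t))=\partial_t^2u(\cdot,t)$ for a.e.\ $t$. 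The assumption $u\in H^2((t_0,t_1),H^2(\Omega))$ guarantees that $u(\cdot,t)$, $\partial_tu(\cdot,t)$ and $\partial_t^2u(\cdot,t)$ belong to $H^2(\Omega)$ for a.e.\ $t$, so the manipulations below are licit.

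First I would establish the elliptic Caccioppoli inequality: if $w\in H^2(\Omega)$ satisfies $\mbox{div}(A\nabla w)=f$ in $\Omega$, then
\[
\|\nabla w\|_{L^2(\omega_0)}\le C\left(\|w\|_{L^2(\omega_1)}+\|f\|_{L^2(\omega_1)}\right),
\]
with $C$ depending only on $\kappa$, $\omega_0$ and $\omega_1$. To prove it, fix a cutoff $\chi\in C_0^\infty(\omega_1)$ with $0\le\chi\le 1$ and $\chi=1$ on $\omega_0$, which exists since $\omega_0\Subset\omega_1$ and whose gradient is controlled by $\mbox{dist}(\omega_0,\partial\omega_1)$. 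Multiplying the equation by $\chi^2w$, integrating over $\Omega$, and integrating by parts in space—the boundary terms vanishing because $\chi^2w$ is compactly supported in $\omega_1\Subset\Omega$—gives
\[
\int_\Omega \chi^2 A\nabla w\cdot\nabla w\,dx=-2\int_\Omega \chi w\,\nabla\chi\cdot A\nabla w\,dx-\int_\Omega \chi^2 wf\,dx.
\]
Using the ellipticity \eqref{0.1}, the bound \eqref{0.2}, and Young's inequality to absorb the cross term $2\int\chi w\,\nabla\chi\cdot A\nabla w$ into the left-hand side, one gets $\kappa\int_\Omega\chi^2|\nabla w|^2\,dx\le C(\|w\|_{L^2(\omega_1)}^2+\|f\|_{L^2(\omega_1)}^2)$, and the claim follows since $\chi\equiv 1$ on $\omega_0$.

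Next I would apply this estimate twice: to $w=u(\cdot,t)$ with $f=\partial_tu(\cdot,t)$, and to $w=\partial_tu(\cdot,t)$ with $f=\partial_t^2u(\cdot,t)$. Squaring and integrating in $t$ over $(t_0,t_1)$ yields
\[
\|\nabla u\|_{L^2((t_0,t_1),L^2(\omega_0))}^2\le C\left(\|u\|_{L^2((t_0,t_1),L^2(\omega_1))}^2+\|\partial_tu\|_{L^2((t_0,t_1),L^2(\omega_1))}^2\right)
\]
and
\[
\|\nabla\partial_tu\|_{L^2((t_0,t_1),L^2(\omega_0))}^2\le C\left(\|\partial_tu\|_{L^2((t_0,t_1),L^2(\omega_1))}^2+\|\partial_t^2u\|_{L^2((t_0,t_1),L^2(\omega_1))}^2\right).
\]
Adding these to the trivial bounds $\|u\|_{L^2(\omega_0\times(t_0,t_1))}\le\|u\|_{L^2(\omega_1\times(t_0,t_1))}$ and $\|\partial_tu\|_{L^2(\omega_0\times(t_0,t_1))}\le\|\partial_tu\|_{L^2(\omega_1\times(t_0,t_1))}$, and recalling the definitions of the $H^1((t_0,t_1),H^1(\omega_0))$ and $H^2((t_0,t_1),L^2(\omega_1))$ norms, produces \eqref{5.4+}.

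The proof is essentially routine; the one point requiring care—and the genuine idea—is the reduction to the elliptic setting by freezing $t$, together with the observation that $\partial_tu$ solves the \emph{same} elliptic equation with source $\partial_t^2u$. This commutation of $\partial_t$ with $\mbox{div}(A\nabla\,\cdot\,)$ is exactly what the hypothesis that $A$ is time-independent buys us, and it is what lets us pick up the $\partial_t^2u$ term on the right while avoiding any loss at the temporal endpoints.
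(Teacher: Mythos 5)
Your proof is correct and is essentially the paper's argument: the authors also test with $\phi^2u$, integrate by parts in space only, absorb the cross term by Young's inequality, keep $\partial_tu$ on the right-hand side rather than integrating by parts in time, and then apply the same estimate to $\partial_tu$ (which solves $Lv=0$ since $A$ is time-independent). The only cosmetic difference is that you run the computation slice-by-slice in $t$ as an elliptic Caccioppoli inequality with source $f=\partial_tu(\cdot,t)$ and integrate afterwards, whereas the paper performs the identical computation directly on the space--time cylinder.
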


\begin{proof}
Let $u\in H^2((t_0,t_1),H^2(\Omega ))$ satisfying $Lu=0$ in $Q$. As we have done in the preceding proofs, it is sufficient to prove
\[
C\|u\|_{L^2((t_0 ,t_1),H^1(\omega _0))}\le \|u\|_{H^1((t_0,t_1),L^2(\omega _1))},
\]
because this inequality holds for both $u$ and $\partial_tu$.

By Green's formula, for any $v\in L^2((t_0 ,t_1),H_0^1(\Omega ))$, we have
\begin{equation}\label{5.5}
\int_{t_0}^{t_1}\int_\Omega A\nabla u\cdot \nabla vdxdt -\int_{t_0}^{t_1}\int_\Omega \partial _t uvdxdt=0.
\end{equation}
Let $\phi \in C_0^\infty (\omega_1 )$ satisfying $0\le \phi \le 1$ and $\phi =1$ in $\omega _0$.

Taking $v=\phi ^2  u$ in \eqref{5.5} we get in straightforward manner 
\[
\int_{t_0}^{t_1}\int_{\omega _1} \phi ^2A\nabla u\cdot \nabla udxdt=-2\int_{t_0}^{t_1}\int_{\omega_1} ( \phi \nabla u)\cdot (uA\nabla \phi)dxdt +\int_{t_0}^{t_1}\int_{\omega_1} \phi ^2\partial _tuudxdt.
\]

But
\[
\int_{t_0}^{t_1}\int_{\omega _1} \phi ^2A\nabla u\cdot \nabla vdxdt \ge \kappa \int_{t_0}^{t_1}\int_{\omega _1}\phi ^2|\nabla u|^2dxdt.
\]

Therefore 
\begin{align}
\kappa \int_{t_0}^{t_1}\int_{\omega _1}\phi ^2|\nabla u|^2dxdt \le -2\int_{t_0}^{t_1}\int_{\omega_1} &( \phi \nabla u)\cdot (uA\nabla \phi)dxdt\label{5.6}
\\
&+ \int_{t_0}^{t_1}\int_{\omega_1} \phi ^2\partial _tuudxdt. \nonumber
\end{align}

An elementary convexity inequality yields
\begin{align}
&2\left|\int_{t_0}^{t_1}\int_{\omega_1} (\phi \nabla u)\cdot (uA\nabla \phi)dxdt\right| \label{5.7}
\\
&\hskip 2cm\le \frac{\kappa}{2}\int_{t_0}^{t_1}\int_{\omega_1}\phi ^2|\nabla u|^2dxdt +C\int_{t_0}^{t_1}\int_{\omega_1 }u^2dxdt. \nonumber
\end{align}
On the other hand, we have
\begin{equation}\label{5.7.1}
\left| \int_{t_0}^{t_1}\int_{\omega_1} \phi ^2\partial _tuudxdt\right|\le \int_{t_0}^{t_1}\int_{\Omega} \phi ^2u^2dxdt +
\int_{t_0}^{t_1}\int_{\Omega} \phi^2 (\partial_tu)^2dxdt .
\end{equation}

Combining \eqref{5.6}, \eqref{5.7} and \eqref{5.7.1}, we end up getting
\[
C\|\nabla u\|_{L^2((t_0 ,t_1),L^2(\omega _0))}\le \|u\|_{L^2(\omega _1\times (t_0,t_1))}+\|\partial_tu\|_{L^2(\omega _1\times (t_0,t_1))}.
\]
Or equivalently 
\[
C\|u\|_{L^2((t_0 ,t_1),H^1(\omega _0))}\le \|u\|_{H^1((t_0,t_1),L^2(\omega _1))}
\]
as expected.
\end{proof}

An immediate consequence of Caccioppoli's  inequality \eqref{5.4+} and Proposition \ref{propositionS4} (applied both to $u$ and $\partial _tu$), we have

\begin{corollary}\label{corollaryS4}
Let $\nu \in (0,1/2)$. There exist $\omega \Subset \Omega$, only depending on $\Omega$ and $\Gamma_0$, and two constants $C>0$ and $c >0$, only depending on $\Omega$, $\kappa$, $T_0$, $\nu$ and $\Gamma _0$, so that, for any $u\in H^3((t_0,t_1),H^2(\Omega ))$ satisfying $Lu=0$ and $0<\epsilon <(t_1-t_0)/2$, we have
 \begin{align}
C\|u\|_{H^1((t_0 ,t_1),H^1(\omega ))}&\le \epsilon^\nu \|u\|_{H^3((t_0,t_1),H^1(\Omega ))}\label{S4.2}
\\
&+e^{c/\epsilon^2}\left(\|u\|_{H^3((t_0,t_1),L^2(\Gamma _0))}+\|\nabla u\|_{H^2((t_0,t_1), L^2(\Gamma _0))}\right).\nonumber
\end{align}
\end{corollary}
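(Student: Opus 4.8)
The plan is to assemble the corollary from Proposition \ref{propositionS4} and the Caccioppoli inequality \eqref{5.4+}, exactly as announced in the sentence preceding the statement. The key preliminary observation is that, since the coefficient matrix $A$ depends only on $x$, the operator $L$ commutes with $\partial_t$; hence, if $u\in H^3((t_0,t_1),H^2(\Omega))$ satisfies $Lu=0$, then $\partial_t u\in H^2((t_0,t_1),H^2(\Omega))$ satisfies $L\partial_t u=0$ as well. Both $u$ and $\partial_t u$ therefore fall within the scope of Proposition \ref{propositionS4}, for the same interior set $\omega$ (which depends only on $\Omega$ and $\Gamma_0$).

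First I would apply Proposition \ref{propositionS4} to $u$ and to $\partial_t u$, with the given $\nu\in(0,1/2)$, and add the two resulting inequalities. On the left this produces $\|u\|_{H^1((t_0,t_1),L^2(\omega))}+\|\partial_t u\|_{H^1((t_0,t_1),L^2(\omega))}$, which controls $\|u\|_{H^2((t_0,t_1),L^2(\omega))}$ from above up to a universal constant. On the right, the two interior terms $\|u\|_{H^2((t_0,t_1),H^1(\Omega))}$ and $\|\partial_t u\|_{H^2((t_0,t_1),H^1(\Omega))}$ are both dominated by $\|u\|_{H^3((t_0,t_1),H^1(\Omega))}$, while the two boundary contributions add up to
\[
\bigl(\|u\|_{H^2((t_0,t_1),L^2(\Gamma_0))}+\|\nabla u\|_{H^1((t_0,t_1),L^2(\Gamma_0))}\bigr)+\bigl(\|u\|_{H^3((t_0,t_1),L^2(\Gamma_0))}+\|\nabla u\|_{H^2((t_0,t_1),L^2(\Gamma_0))}\bigr),
\]
which is bounded by $\|u\|_{H^3((t_0,t_1),L^2(\Gamma_0))}+\|\nabla u\|_{H^2((t_0,t_1),L^2(\Gamma_0))}$, since a lower-order time-Sobolev norm on a fixed interval is controlled by the corresponding higher-order one. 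This yields an estimate of the form
\[
C\|u\|_{H^2((t_0,t_1),L^2(\omega))}\le \epsilon^\nu\|u\|_{H^3((t_0,t_1),H^1(\Omega))}+e^{c/\epsilon^2}\bigl(\|u\|_{H^3((t_0,t_1),L^2(\Gamma_0))}+\|\nabla u\|_{H^2((t_0,t_1),L^2(\Gamma_0))}\bigr).
\]

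The final step is to upgrade the interior $L^2_x$ norm on the left to an $H^1_x$ norm. I would pick $\omega_0\Subset\omega$ (still depending only on $\Omega$ and $\Gamma_0$) and invoke Caccioppoli's inequality \eqref{5.4+} of Lemma \ref{lemma5.1} with $\omega_1=\omega$, which gives $C\|u\|_{H^1((t_0,t_1),H^1(\omega_0))}\le \|u\|_{H^2((t_0,t_1),L^2(\omega))}$; renaming $\omega_0$ as $\omega$ and chaining with the previous display produces \eqref{S4.2}. I do not anticipate any genuine analytical difficulty here: the whole argument is bookkeeping of time-Sobolev norms, and the only points that merit a line of justification are the commutation $L\partial_t u=0$ and the elementary embedding controlling $\|u\|_{H^2((t_0,t_1),L^2(\omega))}$ by the sum of the two $H^1((t_0,t_1),L^2(\omega))$ norms. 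The one mild subtlety worth flagging is that Caccioppoli's inequality runs from the larger set $\omega$ to the smaller set $\omega_0$, so the interior set furnished by the corollary is necessarily strictly contained in the one provided by Proposition \ref{propositionS4}; since both depend only on $\Omega$ and $\Gamma_0$, this does not affect the stated dependence of the constants.
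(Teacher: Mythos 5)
Your proposal is correct and follows exactly the route the paper intends: the authors state the corollary is an immediate consequence of Proposition \ref{propositionS4} applied to both $u$ and $\partial_t u$ together with Caccioppoli's inequality \eqref{5.4+}, and your bookkeeping (summing the two applications to control $\|u\|_{H^2((t_0,t_1),L^2(\omega))}$, then upgrading to $H^1((t_0,t_1),H^1(\omega_0))$ via Lemma \ref{lemma5.1} and renaming $\omega_0$) fills in precisely those details. The commutation $L\partial_t u=0$ and the shrinkage of the interior set are the only points needing comment, and you address both correctly.
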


We are now in position to complete the proof of Theorem \ref{theorem1}. We recall that 
\[
\mathcal{C}(u,\Gamma _0)=\|u\|_{H^3((t_0,t_1),L^2(\Gamma _0))}+\|\nabla u\|_{H^2((t_0,t_1), L^2(\Gamma _0))}.
\]
If $M=\|u\|_{\mathscr{Z}(Q)}$ then, in light of inequality \eqref{E2} in the end of the proof of Theorem \ref{theorem3.1} and inequality \eqref{S4.2}, we get, for $0<\epsilon <(t_1-t_0)/2$ and $0<\sigma <\sigma_0$,
\[
C\|u\|_{L^2((t_0 ,t_1),H^1(\Omega ))}\le \left(\sigma ^{\min \{\nu ,\alpha\}/2} +e^{e^{e^{c/\sqrt{\sigma}}}}\epsilon ^\nu \right)M+e^{e^{e^{c/\sqrt{\sigma}}}}e^{c/\epsilon^2}\mathcal{C}(u,\Gamma _0),
\]
the constants $C>0$, $c >0$ and $\sigma_0>0$ only depend on $\Omega$, $\kappa$, $T_0$, $\nu$ and $\Gamma _0$. 

The rest of the proof in quite similar to that of Theorem \ref{theorem3.1}.

As we have noted above, we have $L\partial_t^ju=0$ in $Q$ as soon as $Lu=0$ in $Q$, for any integer $j\ge 0$. This observation allows us to state the following variant 
of Theorem \ref{theorem1}, where $\mathscr{Z}^j(Q)=\mathscr{Y}(Q)\cap H^{3+j}((t_0,t_1),H^2(\Omega ))$ is endowed with its natural norm
\[
\|u\|_{\mathscr{Z}^j(Q)}=\|u\|_{\mathscr{Y}(Q)}+\|u\|_{H^{3+j}((t_0,t_1),H^2(\Omega ))}
\]

\begin{theorem}\label{theorem1+}
Let $\Gamma_0$ be a nonempty open subset of $\Gamma$ and $s\in (0,1/2)$. Then there exist two constants $C>0$ and $0<\varrho_0\le \varrho^\ast$, depending on $\Omega$, $\kappa$, $T_0$, $\alpha$, $s$ and $\Gamma_0$,  so that, for any integer $j\ge 0$ and $u\in\mathscr{Z}^j(Q)$ satisfying $Lu=0$ in $Q$, we have
\[
C\|u\|_{H^j((t_0,t_1),H^1(\Omega ))}\le (j+1)\|u\|_{\mathscr{Z}^j(Q)}\Psi_{\varrho_0,\mu} \left(\frac{\mathcal{C}_j(u,\Gamma _0)}{\|u\|_{\mathscr{Z}^j(Q)}}\right),
\]
with $\mu =\min\{\alpha ,s)\}/4$ and
\[
\mathcal{C}_j(u,\Gamma _0)=\|u\|_{H^{j+3}((t_0,t_1),L^2(\Gamma _0))}+\|\nabla u\|_{H^{j+2}((t_0,t_1),L^2(\Gamma _0))}.
\]
\end{theorem}

\appendix

\section{}\label{A}
We are grateful to Tom ter Elst \cite{tE} for having communicated to us the proofs of Lemma \ref{Glemma} and Corollary \ref{corollaryA} bellow. 
We reproduce in this appendix these proofs.

\begin{proof}[Proof of Lemma \ref{Glemma}]  Let $D$ be a Lipschitz domain of $\mathbb{R}^n$ and introduce the notations
\begin{align*}
&Q=\{ x=(x',x_n)\in \mathbb{R}^n;\; |x'|<1,\; -1<x_n<1\},
\\
&Q_-=\{x\in E;\; x_n<0\},
\\
&Q_0=\{ x\in E;\; x_n=0\}.
\end{align*}
As $D$ is Lipschitz, if $\tilde{x}\in \Gamma$ then there exist a neighborhood $U$ of $\tilde{x}$ in $\mathbb{R}^n$ and a bijective map $\phi :Q \rightarrow U$  so that  $\phi : \overline{Q}\rightarrow \overline{U}$ and $\phi ^{-1}:\overline{U}\rightarrow \overline{Q}$ are Lipschitz continuous, and 
\[
\phi (Q_-)=\Omega \cap U\quad \phi (Q_0)=U\cap \Gamma.
\]

For $x,y\in D \cap U$ define
\[
\psi (t) = \phi (\phi^{-1}(x)+t[\phi^{-1}(y)-\phi^{-1}(x)]),\quad t\in [0,1].
\]
Clearly, noting that $Q_-$ is convex, $\psi$ is a Lipschitz path in $D$ joining $x$ to $y$. We have in addition, where $k_+$ and $k_-$ are the respective Lipschitz constants of $\psi$ and $\psi^{-1}$,
\[
|\psi (t)-\psi (s)|\le k_+|t-s||\phi^{-1}(y)-\phi^{-1}(x)|\le k_+k_-|t-s||x-y|,
\]
$t,s\in [0,1]$.

Therefore
\begin{equation}\label{A1}
\int_0^1\left|\dot{\psi}(t)\right|dt\le k|x-y|\le k\mbox{diam}(D).
\end{equation}
Here $k=k_+k_-$.

A compactness argument shows that $\overline{D}$ can be recovered by finite number of open subsets $U_j$ with $U_j$ is either a ball or an open subset of the form $U$. As $D$ is a domain then necessarily any $U_j$ intersect at least $U_\ell$ for some $\ell\ne j$. In consequence, any arbitrary two points $x,y\in D$ can be joined by a Lipschitz path consisting on finite number (independent on $x$ and $y$) of line segments and paths of the form $\psi$. Whence, in light of \eqref{A1}, we may find a constant $C>0$ depending only on $D$ so that $d_g^D(x,y)\le C$ for any $x,y\in D$.
\end{proof}

It is worth mentioning the following consequence of Lemma \ref{Glemma}. 
\begin{corollary}\label{corollaryA}
Let $D$ be a Lipschitz bounded domain of $\mathbb{R}^n$. Then there exists a constant $\varkappa >0$ so that
\[
d_g^D(x,y)\le \varkappa |x-y|\quad \mbox{for any}\; x,y\in D .
\]
\end{corollary}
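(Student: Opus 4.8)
The plan is to split the argument according to whether $x$ and $y$ are close together or far apart, and to glue the two regimes using a Lebesgue number for the finite covering of $\overline{D}$ constructed in the proof of Lemma \ref{Glemma}.

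First I would isolate the \emph{local} Lipschitz estimate that is already implicit in that proof. Within a single boundary chart $U$, the path $\psi$ built there satisfies $\ell(\psi) \le k|x-y|$ by \eqref{A1}, so $d_g^D(x,y) \le k|x-y|$ whenever $x$ and $y$ lie in a common chart $U$; within an interior ball contained in $D$ the straight segment lies in $D$ and gives $d_g^D(x,y) = |x-y|$. Letting $k$ denote the maximum of $1$ and of the finitely many chart constants $k_j$, this bound is uniform over all pairs sharing a chart.

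Next I would pass from ``sharing a chart'' to ``being Euclidean-close''. Since the $U_j$ form a finite open cover of the compact set $\overline{D}$, the Lebesgue number lemma provides $\delta > 0$ such that, for every $x \in \overline{D}$, the ball $B(x,\delta)$ is contained in some $U_j$. Consequently, if $|x-y| < \delta$ then $y \in B(x,\delta) \subset U_j$, so $x$ and $y$ share the chart $U_j$ and the local estimate yields $d_g^D(x,y) \le k|x-y|$. For the complementary range $|x-y| \ge \delta$ I would appeal directly to Lemma \ref{Glemma}: with $C = \|d_g^D\|_{L^\infty(D \times D)}$ one has $d_g^D(x,y) \le C \le (C/\delta)|x-y|$. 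Taking $\varkappa = \max(k, C/\delta)$ then gives the asserted inequality in every case.

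I do not expect a genuine obstacle; the argument is a routine near/far dichotomy. The one point that must be handled with care is making the locality \emph{uniform}, i.e.\ ensuring a single $\delta$ works simultaneously at all points of $\overline{D}$, and this is exactly what the Lebesgue number supplies. A secondary bookkeeping point is that the constant $k$ must absorb both the boundary charts and the interior balls, which is why $1$ is included in its definition.
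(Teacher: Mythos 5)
Your argument is correct, and it reaches the conclusion by a genuinely different route than the paper. Both proofs rest on the same two ingredients — the local estimate \eqref{A1} inside a single chart (or interior ball) and the global bound $\|d_g^D\|_{L^\infty(D\times D)}<\infty$ from Lemma \ref{Glemma} — but you uniformize the local estimate \emph{directly}, via a Lebesgue number $\delta$ for the finite cover $\{U_j\}$ of $\overline{D}$, and then conclude with the near/far dichotomy $\varkappa=\max\bigl(k, C/\delta\bigr)$. The paper instead argues by contradiction: it extracts sequences $(x_i)$, $(y_i)$ with $d_g^D(x_i,y_i)>i\,|x_i-y_i|$, uses the $L^\infty$ bound to force $|x_i-y_i|\to 0$, passes to a subsequence converging to a common limit $x$, places $x$ in a single chart $U_j$, and contradicts \eqref{A1} there. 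Your version has the advantage of being constructive (it exhibits an explicit admissible $\varkappa$ in terms of the chart constants, the Lebesgue number, and $\sup d_g^D$) and of making transparent exactly where uniformity enters; the paper's version avoids invoking the Lebesgue number lemma explicitly at the cost of a non-constructive subsequence extraction. Two small points of hygiene, neither of which is a gap: the Lebesgue number statement should be read on the compact metric space $\overline{D}$, i.e.\ it is $B(x,\delta)\cap\overline{D}$ that lies in some $U_j$ (which is all you use); and when $U_j$ is an interior ball one needs, as in the paper's construction, that the ball is contained in $D$ so that the straight segment is admissible — your inclusion of $1$ in the definition of $k$ handles that case correctly.
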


\begin{proof}
We proceed by contradiction. Assume then that there is no $\varkappa >0$ so that $d_g^D(x,y)\le \varkappa |x-y|$, for any $x,y\in D$. In particular, for any positive integer $i$, we may find two sequences $(x_i)$ and $(y_i)$ in $D$ so that $x_i\ne y_i$ and $d_g^D(x_i,y_i)>i|x_i-y_i|$, for each $i$. Thus
\begin{equation}\label{A2}
|x_i-y_i|\le \frac{1}{i}\|d_g^D\|_{L^\infty (D\times D)},\quad \mbox{for each}\; i.
\end{equation}
Subtracting if necessary a subsequence,  we may assume that $x_i$ converges to some $x\in \overline{D}$. Using \eqref{A2} we see that $y_i$ converges also to $x$. Fix $j$ so that $x\in U_j$, where $U_j$ is as in the preceding proof. According to \eqref{A1}, we have
\[
d_g^D(y,z)\le K|y-z|,\quad \mbox{for any}\; y,z\in U_j,
\]
for some constant $K>0$.

On the other hand, there exists a positive integer $i_0$ so that $x_i,y_i\in U_j$, for $i\ge i_0$. Hence, for any $i\ge \max \{i_0,K\}$, we get
\[
i|x_i-y_i|\ge K|x_i-y_i|\ge d_g^D(x_i,y_i)>i|x_i-y_i|.
\]
This leads to the expected contradiction.
\end{proof}

\vskip .5cm

\begin{thebibliography}{CS2}
 
 
\bibitem{BC} M. Bellassoued and M. Choulli, 
\newblock Global logarithmic stability of the Cauchy problem for anisotropic wave equations,
\newblock arXiv:1902.05878.


\bibitem{BY} M. Bellassoued, and M.Yamamoto, 
\newblock Carleman estimates and applications to inverse problems for hyperbolic systems, 
\newblock Springer Monographs in Mathematics. Springer, Tokyo, 2017, xii+260 pp.

\bibitem{Bo2} L. Bourgeois, 
\newblock About stability and regularization of ill-posed elliptic Cauchy problems: the case of C$^{1,1}$-domains, M2AN Math. Model. Numer. Anal. 44 (4) (2010),  715-735.


 \bibitem{Bo}  L. Bourgeois, 
 \newblock Quantification of the unique continuation property for the heat equation, 
 \newblock Math. Control and Related fields, 7 (3) (2017), 347-367.
 

\bibitem{Choulli} M. Choulli, 
\newblock New global logarithmic stability result for the Cauchy problem for elliptic equations, 
\newblock Bull. Aust. Math. Soc. 101 (1) (2020) 141-145.


\bibitem{Chou} M. Choulli, 
\newblock Une introduction aux probl\`emes inverses elliptiques et paraboliques, 
\newblock vol. 65, SMAI-Springer Verlag, Berlin, 2009.

 \bibitem{Ch} M. Choulli, 
 \newblock  Applications of elliptic Carleman  inequalities  to Cauchy and inverse problems, 
 \newblock BCAM Springer Briefs in Mathematics, Springer, Berlin, 2016.
 

 \bibitem{Ch0} M. Choulli, 
 \newblock The property of unique continuation for second order evolution PDE's,
 \newblock arXiv: 2005.08475.


 \bibitem{Ch1} M. Choulli,
 \newblock Boundary value problems for elliptic partial differential equations, book,
 \newblock arXiv:1912.05497.
 
 \bibitem{Ev} L. C. Evans, 
 \newblock Partial differential equations, Graduate Studies in Mathematics, 
 \newblock19. American Mathematical Society, Providence, RI, 1998. xviii+662 pp.

\bibitem{FG} E. Fern\`andez-Cara and   S. Guerrero,
\newblock Global Carleman inequalities for parabolic systems and applications to controllability, 
\newblock SIAM J. Control Optim. \textbf{45} (4) (2006), 1399-1446. 


\bibitem {FLZ}X. Fu, Q. L\"u and X. Zhang, 
\newblock Carleman estimates for second order partial differential operators and applications. A unified approach,
\newblock SpringerBriefs in Mathematics. BCAM SpringerBriefs. Springer, 2019. xi+127 pp.


\bibitem{FI} A. V. Fursikov  and O. Yu. Imanuvilov,
\newblock Controllability of evolution equations, 
\newblock Lecture Notes Series, Seoul National Univ., 1996.
 
 \bibitem{Gr} P. Grisvard,
 \newblock Elliptic Problems in Nonsmooth Domains, 
 \newblock Pitman, Boston, MA, 1985.

 \bibitem{Ha} J. Hadamard,
 \newblock Lectures in Cauchy's problem in linear partial differential equations, 
 \newblock Yale University Press, New Haven, 1923.
 
 \bibitem{HP} A. Henrot and  M. Pierre,
 \newblock Variation et optimisation de formes, vol. 48, 
 \newblock SMAI-Springer Verlag, Berlin, 2005.
 

 \bibitem{Hu} X. Huang,
\newblock Carleman Estimate for a general second-order hyperbolic equation,
\newblock Inverse Problems and related Topics, Springer Proceedings in Mathematics and Statistics, Springer, Singapore, 2020, 149-165.

 \bibitem{Is} V. Isakov, 
\newblock Inverse problems for partial differential equations, 
\newblock third edition, Applied Mathematical Sciences 127, Springer, Cham, 2017, xv+406 pp.

\bibitem{LLe}  C. Laurent and M. L\'eutaud,
\newblock Quantitative unique continuation for operators with partially analytic coefficients. Application to approximate control for waves,
\newblock J. European Math. Soc. 21(4) (2019), 957-1069.

 
\bibitem{LL} G.  Lebeau and J.  Le  Rousseau,
\newblock On  Carleman  estimates  for  elliptic  and  parabolic  operators. Applications to unique continuation and control of parabolic equations, 
\newblock ESAIM Control Optim. Calc. Var. 18 (3) (2012), 712-747.

\bibitem{LM} J.-L. Lions and E. Magenes,
\newblock Non-homogenous boundary value problems, Vol. I,
\newblock Springer-Verlag, Berlin, 1972.


\bibitem{SS} J.-C. Saut et B. Scheurer,
\newblock Unique continuation for some evolution equations,
\newblock J. Differential Equations 66 (1) (1987), 118-139.


\bibitem{tE} T. ter Elst, 
\newblock private communication.
 
 
 \bibitem{Ve} S. Vessella,
 \newblock Quantitative estimates of unique continuation for parabolic equations, determination of unknown time-varying boundaries and optimal stability estimates, 
\newblock Inverse Problems 24 (2008) 023001 (81 pp).

 
 \bibitem{Ya} M. Yamamoto,
 \newblock Carleman estimates for parabolic equations and applications, 
 \newblock Inverse Problems, 25 (2009) 123013 (75 pp).
 
 
 \end{thebibliography}
\end{document}